\documentclass[12pt]{amsart}
\usepackage{amssymb,amsthm,amsmath,amstext}
\usepackage{mathrsfs}  
\usepackage{bm}        
\usepackage{mathtools} 

\usepackage[left=1.28in,top=.8in,right=1.28in,bottom=.8in]{geometry}

\usepackage{graphicx}
\usepackage[all]{xy}

\theoremstyle{plain}
\newtheorem{theorem}{Theorem}[section]
\newtheorem{prop}[theorem]{Proposition}
\newtheorem{lemma}[theorem]{Lemma}

\newtheorem{cor}[theorem]{Corollary}


\theoremstyle{definition}

\theoremstyle{remark}

\newcommand{\sheaf}[1]{\mathscr{#1}}

\renewcommand{\AA}{\sheaf{A}}

\newcommand{\DD}{\sheaf{D}}

\newcommand{\disc}{d}


\newcommand{\Z}{\mathbb Z}

\newcommand{\Q}{\mathbb Q}
\newcommand{\R}{\mathbb R}

\newcommand{\G}{\mathbb G}

\usepackage{hyperref}
\usepackage{color}

  \usepackage[OT2, T1]{fontenc}
\DeclareSymbolFont{cyrletters}{OT2}{wncyr}{m}{n}
\DeclareMathSymbol{\Sha}{\mathalpha}{cyrletters}{"58}

\hypersetup{colorlinks=true,linkcolor=blue,anchorcolor=blue,citecolor=blue,letterpaper=true}

\begin{document}

\title[ Rost injectivity]
{ Rost injectivity for   classical groups   over function fields of curves over local fields} 

\author[Parimala]{R.\ Parimala }
\address{Department of Mathematics  \\ %
Emory University \\ %
400 Dowman Drive~NE \\ %
Atlanta, GA 30322, USA}
\email{praman@emory.edu}

\author[Suresh]{V.\ Suresh}
\address{Department of Mathematics \\ %
Emory University \\ %
400 Dowman Drive~NE \\ %
Atlanta, GA 30322, USA}
\email{suresh.venapally@emory.edu}

\begin{abstract}  Let   $F_0$  be a complete discretely valued field with residue field  a 
global field or a local field with no real orderings. Let  $F/F_0$ be a  separable quadratic extension
and  $A$  a  central  simple algebra over $F$ with a $F/F_0$-involution $\tau$.
Let $h$ be a hermitian form over $(A,\tau)$ and $G = SU(A,\tau, h)$.
If  $2(ind(A))$ is coprime to  the characteristic of the residue field of $F_0$,  then we prove that the Rost invariant 
$R_G : H^1(F_0, G) \to H^3(F_0, \Q/\Z(2))$ is injective.    Let $K$ be a local field and $L$ the function field of a 
curve over $K$.   Let   $G$ be  an absolutely simple  simply connected 
linear algebraic group over $L$ of classical type. Suppose that the characteristic of the residue field of $K$ is a good prime for $G$.
As a consequence of our result and some known results we conclude   that   the Rost invariant of $G$ is injective. 
 \end{abstract}

 \maketitle
 
 \section{Introduction}
 
 Let $F$ be a field and $G$ an absolutely simple simply connected linear algebraic group defined over $F$. Rost 
 defines a degree three  cohomological invariant for $G$ torsors which yields a map 
 $$R_G : H^1(F, G) \to H^3(F, \Q/\Z((2)))$$
known as the  {\it Rost invariant}.   The triviality of the kernel  of $R_G$ for a given field has consequences
for the arithmetic of $F$; for instance,  if a torsor under $G$ over $F$ has a zero-cycle of degree 1, then it admits 
a rational point. 
 
 Let $A$ be a central simple algebra  over $F$ and $G = SL_1(A)$.  Then 
$H^1(F, G) \simeq F^*/Nrd(A^*)$ and $R_G : H^1(F, G)  \to H^3(F, \Q/\Z((2)))$ is given by $(a) \cdot [A]$ for all 
$a \in F^*$, where $[A]$ denotes the class of $A$ in $H^2(F, \Q/\Z(1))$ (cf. \cite[p. 437]{KMRT}). 
If the index$(A)$ is square free,  a result of Merkurjev and Suslin  (cf. \cite[Theorem 24.2]{suslin})
asserts that  the kernel of $R_G$ is trivial.
It is known that $R_G$ has trivial kernel if $G$ is of type $G_2$ (\cite[Theorem 5.4]{preeti}) or 
quasi split of type $^{3, 6}D_4$ (\cite{Chernousov}), 
$E_6$ or $E_7$ (\cite{Skip}, \cite{Chernousov}). 

 In general 
the kernel of $R_G$ need not be  trivial. In fact there are examples ( due to Merkurjev)  of  fields of cohomological dimension 
3 where the Rost invariant 
admits a nontrivial kernel (cf. \cite[Remark 5.1]{CTPS1}). However in these examples the underlying fields are  not finitely generated over global fields, 
local fields or finite fields. A   natural question arises as to whether $R_G$ has trivial kernel over function fields of curves (resp. surfaces)  over 
global fields or local fields (resp. finite fields). 
In this paper we discuss  this question for function fields of curves over local fields for classical groups. 

 Let $G$ be an  absolutely  simple simply connected linear algebraic group  of classical type over $F$. 
  We say that a  prime $p$ is {\it good} for $G$,  if  $p \neq 2$ for $G$  of type 
  $B_n$, $C_n$, $D_n$ ($D_4$ non trialitarian),  $p$ does not divide $n+1$ for $G$ of type $^1\!\!A_n$ and 
  $p $ does not divide $2(n+1)$ for  $G$ of type $^2\!\!A_n$.    
  
 Throughout this paper by a local field we mean a non-archimedain  local field (i.e. a complete discretely valued field with residue field a finite field). 
  
The main theorem of this paper is the following.

\begin{theorem} (\ref{main-rost})  Let $K$ be a local field with residue field $\kappa$ and
 $F$  the function field of a  curve over $K$. Let $G$ be an absolutely simple simply connected linear algebraic 
  group over $F$ of classical type.
 If char$(\kappa)$ is good with respect to $G$, then 
 $$R_G : H^1(F, G) \to H^3(F, \Q/\Z(2))$$
 is injective. 
\end{theorem}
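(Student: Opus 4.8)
The plan is to combine our Rost-injectivity theorem for special unitary groups over complete discretely valued fields (the result announced in the abstract) with the already known instances of the problem and with a local--global principle over $F$.

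By the classification of absolutely simple simply connected groups of classical type (\cite{KMRT}), $G$ is isomorphic either to $SL_1(A)$ for a central simple $F$-algebra $A$ (type $^1\!\!A_n$), to $SU(B,\tau,h)$ for a central simple algebra $B$ over a separable quadratic field extension $E/F$ with an $E/F$-involution $\tau$ and a hermitian form $h$ (type $^2\!\!A_n$), to $\mathrm{Spin}(A,\sigma,h)$ for a central simple $F$-algebra with orthogonal involution (types $B_n$ and $D_n$, $D_4$ non-trialitarian), or to $Sp(A,\sigma,h)$ for a central simple $F$-algebra with symplectic involution (type $C_n$). For the types $B_n$, $C_n$, $D_n$ and $^1\!\!A_n$ the injectivity of $R_G$ over $F$ is available from known results, resting on the facts that $u(F)=8$ and $I^4(F)=0$, that $\mathrm{cd}_\ell(F)\le 3$ for every prime $\ell$ invertible in $\kappa$, on classification theorems for quadratic forms and for algebras with involution over such $F$, and on the local--global principle for reduced norms (cf.\ \cite{suslin}, \cite{preeti}, \cite{CTPS1} and the references therein). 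Hence the essential remaining case is $G=SU(B,\tau,h)$ of type $^2\!\!A_n$.

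For this case we reduce to the complete situation. Choose a regular integral scheme $\mathcal{X}$, proper and flat over $\OO_K$ of Krull dimension two, with function field $F$ (such an arithmetic surface exists by resolution of singularities for two-dimensional excellent schemes). For a codimension-one point $x\in\mathcal{X}$ let $F_x$ be the completion of $F$ at the corresponding discrete valuation; it is a complete discretely valued field with residue field $\kappa(x)$. If $x$ lies on the closed fibre then $\kappa(x)$ is the function field of a curve over a finite extension of $\kappa$, hence a global field; if $x$ lies on the generic fibre of $\mathcal{X}$ then $\kappa(x)$ is a finite extension of $K$, hence a non-archimedean local field. In either case $\kappa(x)$ has no real orderings, and $\mathrm{char}(\kappa(x))$ equals $0$ or $\mathrm{char}(\kappa)$; since $\mathrm{char}(\kappa)$ is good for $G$ it does not divide $2(n+1)$, and as the index of $B$ divides $\deg_E B=n+1$ we conclude that $2\,\mathrm{ind}(B)$ is prime to $\mathrm{char}(\kappa(x))$. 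Thus the hypotheses of our theorem on special unitary groups over complete discretely valued fields hold over every $F_x$, so $R_G$ is injective over each $F_x$.

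Now take $\xi\in H^1(F,G)$ with $R_G(\xi)=0$. By functoriality of the Rost invariant, $R_G(\xi_{F_x})=0$ for every codimension-one point $x\in\mathcal{X}$, hence $\xi_{F_x}=0$ by the previous step. So $\xi$ lies in the kernel of the localization map $H^1(F,G)\to\prod_{x}H^1(F_x,G)$; by the local--global principle for $G$-torsors over function fields of curves over local fields, applicable since $\mathrm{char}(\kappa)$ is good for $G$ (cf.\ \cite{CTPS1} and the references therein), this kernel is trivial, so $\xi=0$. The \emph{genuinely hard part} of the argument is the complete discretely valued case for type $^2\!\!A_n$, namely our theorem on $SU$-groups, where all the new work lies; granting it, the reduction above asks only that the residue fields of the divisorial completions be identified (immediate from the model) and that the appropriate local--global principle be invoked (known). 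A minor subtlety is type $^1\!\!A_n$: one can either quote the triviality of $\ker R_{SL_1(A)}$ over such $F$, or, using the isomorphism $SL_1(A)\cong SU(A\times A^{\mathrm{op}},\varepsilon)$ over the split quadratic étale $F$-algebra $F\times F$ with $\varepsilon$ the exchange involution, run the argument above verbatim with $E$ replaced by a split quadratic étale algebra.
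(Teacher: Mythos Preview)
Your overall strategy matches the paper's: dispose of types $B_n$, $C_n$, $D_n$ and $^1\!A_n$ by citing known results, and for $^2\!A_n$ reduce via a local--global principle for torsors to the completions $F_x$, where the complete-discretely-valued-field theorem applies. Two points need attention.

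First, and most importantly, your sentence ``the hypotheses of our theorem on special unitary groups over complete discretely valued fields hold over every $F_x$'' overlooks the possibility that $E\otimes_F F_x$ is not a field. The theorem in the abstract (and its precise form, Theorem~\ref{complete}) requires a genuine quadratic \emph{field} extension; when $E$ splits at $x$ the group $G_{F_x}$ is isomorphic to $SL_1(B_x)$ for a central simple $F_x$-algebra $B_x$, and you must invoke a separate result, namely the triviality of the Rost kernel for $SL_1$ over complete discretely valued fields with local or global residue field (this is \cite[Theorem 4.12]{PPS}, exactly what the paper uses in the proof of Theorem~\ref{main-rost-su}). Your closing remark about $^1\!A_n$ addresses the global inner-type case, not this local splitting phenomenon for $^2\!A_n$, and the suggestion to ``run the argument above verbatim with $E$ replaced by a split quadratic \'etale algebra'' does not help, since the CDVF theorem is not stated or proved in that generality.

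Second, your argument shows only that the kernel of $R_G$ is trivial: you take $\xi$ with $R_G(\xi)=0$ and deduce $\xi=0$. Since $H^1(F,G)$ is a pointed set and $R_G$ is not a group homomorphism, passing from trivial kernel to injectivity requires the standard twisting argument (\cite[Proposition~28.8]{KMRT}): given $R_G(\xi)=R_G(\xi')$, twist by $\xi$ to land in $H^1(F,{}_\xi G)$ with trivial Rost invariant, note that ${}_\xi G$ is again of the same type, and apply the trivial-kernel statement there. The paper makes this step explicit; you should too.
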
 

For groups  of type $^1A_n$,  the above theorem is  proved in (\cite{PPS}).
For groups of   type $B_n$, $C_n$ or $D_n$ ($D_4$ non trialitarian),  the above theorem is due to Preeti (\cite{preeti}).
If $G$ is any  quasisplit group over the function field of a curve over a local field,
 then the  triviality of the kernel of $R_G$   is proved in (\cite{CTPS1}). 
The main content of the paper is to prove the above theorem for groups of type $^2A_n$, i.e. special unitary groups 
of algebras with unitary involutions.  We note that the injectivity statement in the above theorem is equivalent to the 
triviality of the kernel of $R_G$ using a twisting argument.

We   begin by  proving  a  classification theorem for hermitian forms over division algebras with 
unitary involutions over complete discretely valued field with residue field a local field or a global field with no real places. 

For a hermitian form $h$  over central simple algebras with unitary involutions, let dim$(h)$ be the dimension and 
disc$(h)$ the discriminant of $h$ (cf. \S \ref{prelims}).  If dim$(h)$ is even and disc$(h)$ is trivial, 
let $R(h)$ be the Rost invariant of  $h$  (cf. \S \ref{rost}).

\begin{theorem} (\ref{class-complete}) Let $F$ be a complete discretely valued field with residue   field $K$ a global field with 
no real places or a local field.  Let $A$ be a central simple algebra over $F$ with 
 an involution  $\tau$ of second kind.  Suppose that $2per(A)$ is coprime to char$(K)$.   
 Let $h$  be a hermitian form over  $(A, \tau)$. 
 If dim$(h)$ is even,   disc$(h)$ is trivial   and $R(h)$ is trivial, then 
 $h$ is hyperbolic.  
 \end{theorem}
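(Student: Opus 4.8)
The plan is to reduce to the case of hermitian forms over a division algebra with involution, and then to exploit the structure of the complete discretely valued field $F$ via residues. First I would replace $(A,\tau)$ by the underlying division algebra with its induced involution $(D,\tau_0)$, so that $h$ becomes a hermitian form over $(D,\tau_0)$; hyperbolicity, dimension, discriminant and the Rost invariant are all insensitive to this Morita-type reduction, so nothing is lost. Let $v$ be the discrete valuation on $F$ with residue field $K$. Since $2\,per(A)$ is coprime to $\mathrm{char}(K)$, the algebra $D$ is unramified (or at worst tamely ramified) over $v$; I would first treat the unramified case and afterwards handle ramification by a uniformizer argument. In the unramified case, the involution $\tau_0$ and the hermitian form $h$ admit, after scaling, a diagonalization $h \isom \qf{u_1,\dots,u_n}$ with each $u_i$ a unit or a uniformizer times a unit, and by the theory of hermitian forms over complete discretely valued fields (a Springer-type decomposition) one writes $h \isom h_1 \perp \pi h_2$ where $h_1, h_2$ are "residually defined" hermitian forms over $(\ol D, \ol\tau_0)$ over the residue field $K$.

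The next step is to compute how the three invariants behave under this residue decomposition. The dimension of $h$ is $\dim h_1 + \dim h_2$; the discriminant and the Rost invariant $R(h)$ should decompose, via the residue maps $H^i(F,\cdot) \to H^{i-1}(K,\cdot)$, into contributions from $h_1$ and from the residue forms $\ol h_1, \ol h_2$. Concretely, I expect $\residue_v(\disc h)$ to be governed by $\dim h_2$ and $\disc(\ol h_2)$, and $\residue_v(R(h))$ to be governed by $R(\ol h_2)$ together with lower-order terms involving $\disc(\ol h_1)$ and $[\ol D]$. The hypotheses that $\dim h$ is even, $\disc h$ trivial and $R(h)$ trivial then force: $\dim h_2$ even; $\disc(\ol h_2)$ trivial; $R(\ol h_2)$ trivial; and, passing to the residue, $\dim h_1$ even, $\disc(\ol h_1)$ trivial, $R(\ol h_1)$ trivial. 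Here is where the arithmetic of $K$ enters: since $K$ is a global field with no real places or a local field, one invokes the known classification of hermitian forms over $(\ol D, \ol\tau_0)$ over such fields — equivalently the injectivity of the dimension–discriminant–Rost invariant triple on the Witt group of hermitian forms, which is classical for global and local fields (Landherr, Kneser) — to conclude $\ol h_1$ and $\ol h_2$ are hyperbolic over $K$. Lifting hyperbolicity back through the complete valuation (a hermitian form over $F$ is hyperbolic iff its residue forms are) then gives that $h$ is hyperbolic.

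To finish, I would handle the ramified case. Writing $D = D_0 \tensor_{F_0} F$-type data or using a maximal unramified subfield, one reduces a ramified $(D,\tau_0)$ to an unramified one over a quadratic (ramified) extension, or splits off the ramified part of $h$ as a form over a smaller algebra; the coprimality of $2\,per(A)$ with $\mathrm{char}(K)$ guarantees tameness, so that the residue computations above remain valid after the appropriate twist by the uniformizer. Alternatively one can argue uniformly by working with a second residue / the refined ramification filtration on $H^3(F,\Q/\Z(2))$.

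The main obstacle I anticipate is the precise bookkeeping of the Rost invariant under the residue map: one must show that $\residue_v R(h)$ equals $R(\ol h_2)$ up to explicitly identifiable correction terms (products of $\disc$ classes with $[\ol D]$), and that the vanishing of $R(h)$ together with the discriminant condition is strong enough to kill both the residue and the "unramified part" of $R$. This requires a clean formula for $R(h)$ in terms of a diagonalization of $h$ — presumably $R(h) = \sum (u_i) \cdot (u_j) \cdot [D]$-type expression modulo lower terms, established in §\ref{rost} — and careful tracking of which cup products survive in the graded pieces of the valuation filtration. Everything else is either the standard Springer decomposition for hermitian forms over complete fields or the classical arithmetic classification over $K$, both of which I would cite rather than reprove.
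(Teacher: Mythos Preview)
Your Springer decomposition $h = h_1 \perp \pi h_2$ is the right opening, but the argument breaks where you claim the hypotheses force $\disc(\bar h_1)$ and $\disc(\bar h_2)$ to be \emph{separately} trivial. From $\disc(h)=1$ you only obtain $\disc(h_1)=\disc(h_2)$ (together with $\dim h_2$ even), not triviality of each; and there is no ``$R(\bar h_i)$'' to speak of over $K$, since $\mathrm{cd}_2(K)\le 2$ kills $H^3$. So Landherr--Kneser gives at best that $\bar h_1$ and $\bar h_2$ are Witt-equivalent, not hyperbolic. The paper's maneuver in the unramified-$D$ case (Proposition~\ref{unramified}) is precisely to exploit this: from $\bar h_1\simeq\bar h_2$ one gets $h=\qf{1,\pi_0}h_1$ up to a hyperbolic summand, and then the decisive input is the explicit formula $R(\qf{1,\pi_0}h_1)=\DD(A,\tau,h_1)\cdot(-\pi_0)$ (Theorem~\ref{ri-lambda}), proved by a transcendental specialization in \S\ref{rost-comp}. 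Taking the residue at $\pi_0$ yields $\DD(\bar h_1)=0$, and over a global or local $K$ with no real places this forces $\bar h_1$ hyperbolic (Proposition~\ref{dalgebra}). Your hoped-for ``$R(h)=\sum (u_i)\cdot(u_j)\cdot[D]$'' expansion does not exist in this setting; the discriminant-algebra formula is what replaces it.

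You also substantially underestimate the remaining cases. When $F/F_0$ is unramified but $D$ is ramified, the paper does not get by with ``a uniformizer argument'': one must produce a parameter $\pi_D$ compatible with $\tau$ (Lemma~\ref{pid}), pass to the commutant $D_1$ of the degree-$2$ subfield $E_1\subset E\subset D$ via the explicit Morita correspondence worked out in \S\ref{Morita}, and run an induction on the ramification index $e(D/F)$; the odd part of $[E:F]$ is handled separately by transfer to an odd-degree subfield. The case where $F/F_0$ itself is ramified (Proposition~\ref{ramified}) is yet another distinct argument, resting on the fact that then $D=D_0\otimes_{F_0}F$ with $D_0$ a quaternion algebra over $F_0$, and on the bound $\dim h_2\le 3$ for anisotropic skew-hermitian forms over $(\bar D_0,\star)$ over $K_0$ (Proposition~\ref{dim3}). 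Neither of these structures is visible in your sketch, and the former is where most of the work in Theorem~\ref{class-complete} actually lies.
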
  
 
 The above classification theorem reduces the triviality of the Rost kernel to the subset of elements 
 in $H^1(F_0, SU(A, \tau, h))$ which are in the image of  the connecting map
 $\delta : F^{*1} \to  H^1(F_0, SU(A, \tau, h))$ (cf. \S \ref{rost}).  Results from class field theory are used in this part of the proof. 
 Finally we arrive at the triviality of the Rost kernel for function fields of curves over local fields (\ref{main-rost-su}) by appealing to 
 a theorem of Kato on the Hasse principle for the degree three Galois cohomology  (\cite[Theorem 5.2]{Ka}) 
 and a Hasse principle for torsors 
 under $SU(A, \tau, h)$ proved in (\cite[Theorem 13.1]{PS2022}). 

We finally remark that Theorem 1.2 together  with  Hasse principle for hyperbolicity of 
hermitian forms (\cite[Theorem 11.6]{PS2022}) we get the following classification theorem for hermitian forms 
over the function fields of curve over local fields. 

\begin{theorem}(\ref{class-semiglobal}) Let $K$ be a local field with residue field $\kappa$ and
 $F$  the function field of a  curve over $K$. Let $A$ be a central simple algebra over $F$ with 
 an involution  $\tau$ of second kind. Suppose that 2per$(D)$ is coprime to char$(\kappa)$.
  Let $h$  be a hermitian form over  $(A, \tau)$. 
 If dim$(h)$ is even,   disc$(h)$ is trivial   and $R(h)$ is trivial, then 
 $h$ is hyperbolic.  
\end{theorem}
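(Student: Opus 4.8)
The plan is to deduce Theorem~\ref{class-semiglobal} from the complete case, Theorem~\ref{class-complete}, by a local--global argument. Write $F_0\subset F$ for the subfield fixed by $\tau$, so that $[F:F_0]=2$. By the Hasse principle for hyperbolicity of hermitian forms over function fields of curves over local fields, \cite[Theorem 11.6]{PS2022}, to prove that $h$ is hyperbolic over $F$ it suffices to prove that $h$ becomes hyperbolic over the completion $(F_0)_v$ for every divisorial discrete valuation $v$ of $F_0$, that is, for the valuations attached to the codimension-one points of a regular proper model of the corresponding arithmetic surface. So we are reduced to establishing the local hyperbolicity of $h$ at each such $v$.

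Fix a divisorial valuation $v$ of $F_0$. Then $(F_0)_v$ is a complete discretely valued field whose residue field $\kappa(v)$ is, according as the center of $v$ is vertical (on the special fiber) or horizontal, either the function field of a curve over a finite field, hence a global field of positive characteristic with no real places, or a finite extension of a local field, hence a non-archimedean local field; in either case $\kappa(v)$ is a residue field of the kind to which Theorem~\ref{class-complete} applies. The hypotheses of Theorem~\ref{class-semiglobal} descend to $(F_0)_v$: the dimension of $h$ remains even; $\mathrm{disc}(h)$ and $R(h)$ remain trivial over $(F_0)_v$, being trivial over $F_0$ and functorial; and $\mathrm{per}(A\otimes_{F_0}(F_0)_v)$ divides $\mathrm{per}(A)=\mathrm{per}(D)$ while $\mathrm{char}\,\kappa(v)$ equals $0$ or $\mathrm{char}(\kappa)$, so coprimality of $2\,\mathrm{per}(D)$ with $\mathrm{char}(\kappa)$ forces $2\,\mathrm{per}(A\otimes_{F_0}(F_0)_v)$ to be coprime with $\mathrm{char}\,\kappa(v)$.

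Now apply Theorem~\ref{class-complete} at $v$. If $v$ does not split in $F$, then $F\otimes_{F_0}(F_0)_v$ is a field, a finite (hence complete discretely valued) extension of $(F_0)_v$ whose residue field is a finite extension of $\kappa(v)$ and therefore again of the permitted kind, carrying the induced involution of the second kind; Theorem~\ref{class-complete} applied to $A\otimes_{F_0}(F_0)_v$ over this field, together with the facts just checked, gives that $h$ is hyperbolic over $(F_0)_v$. If $v$ splits in $F$, then $F\otimes_{F_0}(F_0)_v$ is a split quadratic \'etale algebra over $(F_0)_v$ and every hermitian form over $(A\otimes_{F_0}(F_0)_v,\tau)$ is hyperbolic, so in particular $h$ is. Hence $h$ is hyperbolic over $(F_0)_v$ for every divisorial valuation $v$ of $F_0$, and \cite[Theorem 11.6]{PS2022} then yields that $h$ is hyperbolic over $F$.

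The two substantive inputs are the quoted results, Theorem~\ref{class-complete} and \cite[Theorem 11.6]{PS2022}; the remaining argument is formal, the one step deserving care being the identification of the residue fields $\kappa(v)$, and of the residue fields of $F\otimes_{F_0}(F_0)_v$, as local fields or as global fields with no real places, together with the bookkeeping on $\mathrm{per}(A)$ and $\mathrm{char}\,\kappa(v)$, since it is precisely this that licenses the application of Theorem~\ref{class-complete} at every completion.
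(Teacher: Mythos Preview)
Your proposal is correct and follows essentially the same approach as the paper: reduce to completions at divisorial valuations via the Hasse principle \cite[Theorem 11.6]{PS2022}, then at each place apply Theorem~\ref{class-complete} when $F\otimes_{F_0}(F_0)_v$ is a field and note that $h$ is automatically hyperbolic when it splits. Your write-up is more detailed than the paper's (you spell out why $\kappa(v)$ is of the required type and why the period and characteristic conditions persist), but the argument is the same.
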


Here is an outline of the structure of the paper. Our main discussion is about the 
 special unitary groups of hermitian forms over central simple algebras with involutions of second kind. 
We begin by recalling some properties of  basic invariants of hermitian  forms in \S\ref{prelims}. 
Next we  recall the definition of  the Rost invariant of hermitian forms  in \S\ref{rost} and 
make some computations of the  Rost invariant for some classes of hermitian forms   in  \S\ref{rost-comp}. 
 In \S\ref{class-gen}, we give a classification of low rank hermitian forms over general fields and
recall a classification of hermitian forms over fields of cohomological dimension 3. 
Using global and local class field theory, we prove some results on 
hermitian forms and algebras with involutions of second kind over global fields  with no real places  and local fields in \S\ref{global-local}. 
 In \S\ref{Morita}, we describe the correspondence of hermitian forms  in certain set-up 
associated to  Morita equivalence.  In \S\ref{classification}, we give a classification of hermitian forms over complete discretely 
valued fields having residue fields global fields  with no real places  or local fields.  In \S\ref{main},  using the classification of
 results of \S \ref{classification}, we 
prove the  injectivity of  the Rost invariant  for  groups of type $^2A_n$ over function fields of curves over local fields.

We thank Merkurjev for  discussions on the Rost invariant. 

 \section{Preliminaries}
 \label{prelims}
 We refer the reader to (\cite{Sch},  \cite{Knus} and \cite{BP}) for generalities on hermitian forms. 
 Let $F_0$ be a field  of with char$(F_0) \neq 2$ and $F/F_0$ a field extension of degree at most 2. 
 Let $A$ be a central simple algebra over $F$ with a $F/F_0$-involution $\tau$. 
 Let $(V, h)$ (or simply $h$)  be a  hermitian form over $(A, \tau)$.   We 
 write  $A = M_n(D)$ for some division algebra $D$ over $F$.   The 
  {\it dimension}  of $h$,   denoted by dim$(h)$, is defined to be dim$_DV/n$. 
  
 Suppose $A = D$ is division.  If $m = dim(h)$,  then $h$ can be represented by  matrix $(a_{ij})$ for some 
$a_{ij} \in A$ with $\tau(a_{ij}) = a_{ji}$. 
The {\it discriminant } of $h$ defined as $disc(h) = (-1)^{m(m-1)/2} Nrd_A((a_{ij})) \in F_0^*$.
Suppose $A = M_n(D)$.  Then $D$ has an involution  $\tau_0$ of same kind as $\tau$
and $\tau$ is the adjoint involution given by a hermitian form over $(D, \tau_0)$. 
Let $\tilde{h}$ be the hermitian form  over $(D,\tau_0)$ associated to $h$ by Morita equivalence. 
If the dim$(h)$ is even, 
Then the {\it discriminant} of $h$ is defined as the discriminant of $\tilde{h}$ (\cite[\S 2.2]{BP}). 
If $\tau$ is of first kind (i.e. $\tau$ is identity on $F =F_0$), then disc$(h)$ is well defined  modulo $ F^{*2}$
and we consider  disc$(h)$ as an element in $ F^*/F^{*2}$. 
If $\tau$ is of second kind, then  disc$(h)$ is well defined modulo $N_{F/F_0}(F^*)$ and 
we consider disc$(h)$ as an element in $F_0^*/N_{F/F_0}(F^*)$.

 Suppose  $\tau$ is of second kind and deg$(A)dim(h)$ is even. We also have the discriminant algebra  $\DD(A,\tau, h)$ (\cite[Ch.II, \S 10]{KMRT}). 
 Suppose $A = F$. Then $h$ is given by diagonal form $<a_1, \cdots , a_m>$ for some $a_i \in F_0^*$.
 In this case disc$(h) = (-1)^{m(m-1)} a_1 \cdots a_m \in F_0^*/N_{F/F_0}(F^*)$. 
 If $F = F_0(\sqrt{\theta})$, then $\DD(A, \tau, h) $ is Brauer equivalent to the quaternion algebra $(\theta, disc(h))$
 (\cite[Proposition 10.33]{KMRT}). 
  
 \begin{prop}
 \label{even} Let $K_0$ be a field  and  $K/K_0$ a quadratic extension. 
 Let $A$ be  a central division algebra over $K$  of even degree  with a $K/K_0$-involution $\tau$. 
 Let $h$ be a hermitian form over $(A,\tau)$. Suppose that per$(A) = $ deg$(A)$.  
 If the discriminant algebra $\DD(A, \tau, h)$ of $(A, \tau, h)$ is trivial,
 then dim$(h)$ is even.  
 \end{prop}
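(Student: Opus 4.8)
The plan is to prove the contrapositive: if $\dim(h)$ is odd, then $\DD(A,\tau,h)$ is not trivial. Write $d=\deg(A)$, which is even by hypothesis, and let $(V,h)$ be the underlying hermitian space, so that $k:=\dim(h)=\dim_A V$ and $B:=\End_A(V)\cong M_k(A)$ is a central simple $K$-algebra of even degree $kd$; thus $\DD(A,\tau,h)$ is defined, and by its construction (\cite[Ch.II, \S 10]{KMRT}) it is the discriminant algebra $\DD(B,\mathrm{ad}_h)$ of the algebra with unitary involution $(B,\mathrm{ad}_h)$ attached to $(A,\tau,h)$. Note that $[B]=[A]$ in $\Br(K)$.

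The one substantial ingredient I would invoke is the behaviour of the discriminant algebra under restriction of scalars from $K_0$ to $K$: for a central simple $K$-algebra $B$ of even degree $2m$ with a $K/K_0$-involution $\sigma$, one has $\DD(B,\sigma)\tensor_{K_0}K\isom\lambda^m B$, the $m$-th $\lambda$-power of $B$ (\cite[Ch.II, \S 10]{KMRT}). Since the $m$-th exterior power representation of $\GL_N$ sends a central scalar $tI$ to $t^m I$, the connecting map $H^1(K,\PGL_N)\to\Br(K)$ is multiplied by $m$ under $\lambda^m$, whence $[\lambda^m B]=m\,[B]$ in $\Br(K)$. Applying this with $B=\End_A(V)$ and $2m=kd$, the image of $[\DD(A,\tau,h)]$ under the restriction map $\Br(K_0)\to\Br(K)$ equals $\tfrac{kd}{2}\,[A]$.

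It then remains to check that $\tfrac{kd}{2}[A]\neq 0$ when $k$ is odd, which is where the hypothesis $\mathrm{per}(A)=\deg(A)$ enters: since $\mathrm{per}(A)=d$, the vanishing $\tfrac{kd}{2}[A]=0$ would force $d\mid\tfrac{kd}{2}$, i.e.\ $2\mid k$, contradicting that $k$ is odd. Hence $[\DD(A,\tau,h)]$ has nonzero restriction to $\Br(K)$, so $\DD(A,\tau,h)$ is not trivial, giving the desired contrapositive. In short the argument is a brief computation once the right tool is in hand; the only step that needs care — and the only point where the evenness of $\deg(A)$ is used — is the identification $\DD(B,\sigma)\tensor_{K_0}K\isom\lambda^m B$ together with the evaluation $[\lambda^m B]=m[B]$. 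I would also remark that the hypothesis $\mathrm{per}(A)=\deg(A)$ already forces $A$ to be division, so that clause in the statement is automatic.
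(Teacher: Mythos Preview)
Your proof is correct and follows essentially the same route as the paper's: both compute the restriction $\DD(A,\tau,h)\otimes_{K_0}K$ as $\tfrac{kd}{2}\,[A]$ in $\Br(K)$ (the paper writes $\deg(A)=2n$, $\dim(h)=m$ and gets $nm[A]$, which is the same number), and then use $\mathrm{per}(A)=\deg(A)$ to force $2\mid\dim(h)$. The paper simply cites \cite[Proposition~10.30 and p.~116]{KMRT} for the identification you spell out via $\lambda^m B$ and the connecting map, so your argument is a more self-contained version of the same one.
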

 
 \begin{proof} Let deg$(A) = 2n$ and dim$(h) = m$. Then, by (\cite[ Proposition 10.30 and p. 116]{KMRT}), we have
 $\DD(A, \tau, h) \otimes_{K_0} K $ is Brauer equivalent to $A^{\otimes nm}$.  Suppose that 
 $\DD(A, \tau, h)$ is trivial. Then $A^{mn}$ is trivial. Since per$(A) = $ deg$(A) = 2n$, 
 $2n$ divides $nm$. In particular $2$ divides $m$. 
 \end{proof}
 
 If two central simple algebras $A$ and $B$ over a field $K$ are Brauer equivalent, then we write $A = B$. 
 
 For a cyclic extension $E/K$, $\sigma \in $ Gal$(E/K)$ a generator and $\mu \in K^*$,
 let  $(E,\sigma, \mu)$  be  the cyclic  algebra (\cite[Ch. V, \S 8]{Albert}).  We record here a well known property of 
 cyclic algebras. 
 
 \begin{lemma} 
 \label{cyclic} Let $K$ be a field and $E/K$ a finite cyclic extension. Let  $\sigma $ be a generator of 
 Gal$(E/K)$ and $\mu \in K^*$.  Let $K \subset L \subset E$ be a subfield. 
 Then \\
 i) $(E, \sigma, \mu) \otimes_K L  = (E/L, \sigma^{[L : K]}, \mu)$ \\
 ii) $(E, \sigma, \mu^{[E : L]})  = (L, \sigma\!\mid_{L}, \mu)$. 
 \end{lemma}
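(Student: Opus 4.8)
The plan is to translate everything into the cohomology of $G := \mathrm{Gal}(E/K) = \langle\sigma\rangle$, a cyclic group of order $n := [E:K]$. Put $d := [L:K]$; then $L/K$ is cyclic (a subextension of a cyclic extension), $\mathrm{Gal}(E/L) = \langle\sigma^{d}\rangle$ is cyclic of order $n/d = [E:L]$, and $\mathrm{Gal}(L/K) = G/\langle\sigma^{d}\rangle$ is cyclic of order $d$ generated by $\sigma|_{L}$. I will use the standard description of cyclic algebras inside $\Br(E/K) = H^{2}(G, E^{*})$: if $\chi \in \Hom(G,\Q/\Z) = H^{1}(G,\Q/\Z)$ is the character with $\chi(\sigma) = 1/n$ and $\delta \colon H^{1}(G,\Q/\Z) \to H^{2}(G,\Z)$ is the connecting homomorphism of $0 \to \Z \to \Q \to \Q/\Z \to 0$, then $[(E,\sigma,\mu)] = [\mu]\cup\delta\chi$ with $[\mu] \in H^{0}(G,E^{*}) = K^{*}$; likewise over $L$ with $\sigma^{d}$ and the character $\sigma^{d}\mapsto 1/[E:L]$, and over $K$ relative to $L$ with $\sigma|_{L}$ and the character $\sigma|_{L}\mapsto 1/d$.

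For part i) I would use that the map $\Br(K) \to \Br(L)$, $[B]\mapsto[B\otimes_{K}L]$, carries $\Br(E/K) = H^{2}(G,E^{*})$ into $\Br(E/L) = H^{2}(\langle\sigma^{d}\rangle, E^{*})$ and there coincides with restriction of cohomology classes (because $(B\otimes_{K}L)\otimes_{L}E = B\otimes_{K}E$). Since restriction is a ring homomorphism commuting with $\delta$,
$$[(E,\sigma,\mu)\otimes_{K}L] \;=\; \mathrm{Res}\bigl([\mu]\cup\delta\chi\bigr) \;=\; [\mu]\cup\delta\bigl(\chi|_{\langle\sigma^{d}\rangle}\bigr).$$
A one-line computation gives $\chi|_{\langle\sigma^{d}\rangle}(\sigma^{d}) = d/n = 1/[E:L]$, which is exactly the character defining cyclic algebras of $E/L$ with generator $\sigma^{d} = \sigma^{[L:K]}$; hence the right-hand side equals $[(E/L,\sigma^{[L:K]},\mu)]$, proving i).

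For part ii) I would use inflation in the opposite direction: $H^{2}(\mathrm{Gal}(L/K), L^{*}) \to H^{2}(G,E^{*})$ realises, on Brauer classes, the inclusion $\Br(L/K) \subseteq \Br(E/K)$ of subgroups of $\Br(K)$, is a ring homomorphism, and commutes with $\delta$. On characters it sends the generator $\bar\chi$ of $\Hom(\mathrm{Gal}(L/K),\Q/\Z)$ with $\bar\chi(\sigma|_{L}) = 1/d$ to $\mathrm{Inf}(\bar\chi) = [E:L]\cdot\chi$, since both send $\sigma$ to $1/d$. Hence, by $\delta$-linearity and bilinearity of the cup product,
$$[(L/K,\sigma|_{L},\mu)] \;=\; \mathrm{Inf}\bigl([\mu]\cup\delta\bar\chi\bigr) \;=\; [\mu]\cup\delta\bigl([E:L]\,\chi\bigr) \;=\; \bigl[\mu^{[E:L]}\bigr]\cup\delta\chi \;=\; \bigl[(E,\sigma,\mu^{[E:L]})\bigr]$$
in $\Br(K)$, which is ii).

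The calculations involved — restricting and inflating the characters, and the bilinearity bookkeeping — are entirely routine; the only points that require care, and which I regard as the crux, are the two compatibilities invoked: that $-\otimes_{K}L$ on relative Brauer groups is restriction in group cohomology when $L\subseteq E$, and that the natural inclusion $\Br(L/K)\hookrightarrow\Br(E/K)$ is inflation. Both are standard facts about crossed products; should one prefer to avoid them, i) and ii) can be proved by restricting (resp.\ inflating) the explicit $2$-cocycle of the cyclic algebra — the cocycle $(\sigma^{i},\sigma^{j})\mapsto 1$ or $\mu$ according as $i+j<n$ or $i+j\ge n$ — which for i) restricts verbatim to the defining cocycle of $(E/L,\sigma^{[L:K]},\mu)$, and for ii) inflates to a cocycle cohomologous to that of $(E,\sigma,\mu^{[E:L]})$ via the cochain $\sigma^{i}\mapsto\mu^{\lfloor i/d\rfloor}$. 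Either way no genuine obstacle remains; alternatively one may simply invoke the tower formulae for cyclic algebras (e.g.\ in Albert's book or in Draxl's \emph{Skew Fields}).
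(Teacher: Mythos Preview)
Your proof is correct; the cohomological interpretation via cup products $[\mu]\cup\delta\chi$ together with the compatibility of restriction and inflation with $-\otimes_K L$ and the inclusion $\Br(L/K)\subset\Br(E/K)$ is a standard and clean way to establish both identities, and your character computations are accurate.

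The paper, by contrast, argues much more briefly and along different lines. For i) it invokes the general functoriality of cyclic algebras under base change: for any extension $M/K$ one has $E\otimes_K M\simeq\prod\tilde E$ with $\tilde E/M$ cyclic, and $(E,\sigma,\mu)\otimes_K M=(\tilde E,\tilde\sigma,\mu)$; specializing to $M=L\subseteq E$ gives $\tilde E=E$ with induced generator $\sigma^{[L:K]}$. For ii) the paper simply cites Albert. What your approach buys is a self-contained, uniform argument that makes the structural reason (restriction versus inflation of the defining character) transparent, and your explicit cocycle alternative is a nice fallback; what the paper's approach buys is brevity, treating the lemma as the well-known folklore it is and deferring to the literature.
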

 
 \begin{proof} Let $M/K$ be any  field extension. Since $E/K$ is cyclic, 
 $E\otimes_K M \simeq \prod_i \tilde{E}$ for some cyclic extension $\tilde{E}/M$ and 
 the generator $\sigma$ of Gal$(E/K) $ induces a generator $\tilde{\sigma}$ of $\tilde{E}/M$.
 Further $(E, \sigma, \mu) \otimes_K M = (\tilde{E}, \tilde{\sigma}, \mu)$. 
 
 i) Suppose  $K \subseteq L \subseteq E$.   Since $E \otimes_K L \simeq \prod_i L$
 and $\sigma^{[L : K]}$ is the generator of Gal$(E/L)$ induced by $\sigma$,  we have  
 $(E, \sigma, \mu) \otimes_K L  = (E/L, \sigma^{[L : K]}, \mu)$.
 
 ii) This follows from (\cite[15, p.97]{Albert}). 
 \end{proof}
 
 \begin{prop}
 \label{quat} Let $K_0$ be a field of characteristic  not 2.  Let $K/K_0$ be a quadratic field extension. 
 Let $E_0/K_0$ be a   quadratic  field extension and $\sigma$
 the nontrvial automorphism of $E_0/K_0$.  Let $\mu \in K$. If cores$_{K/K_0}(E_0 \otimes K, \sigma \otimes 1, \mu)$ is trivial,  
 then there exists $\lambda  \in K_0$ such that $(E_0 \otimes K, \sigma \otimes 1, \mu) = (E_0, \sigma, \lambda) \otimes_{K_0}K$. 
 \end{prop}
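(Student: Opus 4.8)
The plan is to turn the statement into an identity between norm maps inside a biquadratic extension and then apply Hilbert's Theorem~90. Fix $d\in K_0^*$ with $E_0=K_0(\sqrt d)$, and let $\rho$ denote the nontrivial automorphism of $K/K_0$. Since a degree-two cyclic algebra is a quaternion algebra, $(E_0\tensor_{K_0}K,\sig\tensor 1,\mu)$ is the quaternion algebra $(d,\mu)_K$ and $(E_0,\sig,\lambda)\tensor_{K_0}K=(d,\lambda)_K$, so it suffices to produce $\lambda\in K_0^*$ with $(d,\mu)_K=(d,\lambda)_K$ in $\Br(K)$, i.e.\ with $\mu\lambda\inv$ a norm from $K(\sqrt d)$. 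If $d$ is a square in $K$ --- equivalently $E_0\tensor_{K_0}K\isom K\times K$, which happens exactly when $E_0\isom K$ over $K_0$ --- then $(d,\mu)_K$ is already split and $\lambda=1$ works. So from now on assume $L:=K(\sqrt d)=E_0\tensor_{K_0}K$ is a field; then $E_0\neq K$, the extension $L/K_0$ is Galois with $\mathrm{Gal}(L/K_0)=\langle\sig'\rangle\times\langle\rho'\rangle\isom(\Z/2)^2$, where $\sig'|_{E_0}=\sig$, $\sig'|_{K}=\id$, $\rho'|_{E_0}=\id$, $\rho'|_{K}=\rho$; write $M_0=L^{\sig'\rho'}$ for the third intermediate field, and note $E_0\cap K=K_0$ inside $L$.

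First I would unwind the hypothesis using the projection formula: since $d\in K_0^*$,
$$\cores_{K/K_0}\bigl((d,\mu)_K\bigr)=(d)_{K_0}\cup\cores_{K/K_0}\bigl((\mu)_K\bigr)=\bigl(d,\,N_{K/K_0}(\mu)\bigr)_{K_0},$$
so the triviality of $\cores_{K/K_0}(E_0\tensor K,\sig\tensor 1,\mu)$ means exactly that $N_{K/K_0}(\mu)=N_{E_0/K_0}(e)$ for some $e\in E_0^*$.

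The core of the argument is then a short computation. Computing norms along $\mathrm{Gal}(L/M_0)=\langle\sig'\rho'\rangle$ gives $N_{L/M_0}(\mu)=\mu\,\rho(\mu)=N_{K/K_0}(\mu)$ for $\mu\in K$, and $N_{L/M_0}(e)=e\,\sig(e)=N_{E_0/K_0}(e)$ for $e\in E_0$, hence $N_{L/M_0}(\mu/e)=1$. By Hilbert~90 for the cyclic extension $L/M_0$ there is $z\in L^*$ with $\mu/e=z/\sig'\rho'(z)$. Multiplying numerator and denominator of $z/\sig'\rho'(z)$ by $\rho'(z)$ and using $z\,\rho'(z)=N_{L/E_0}(z)$ and $\rho'(z)\,\sig'\rho'(z)=N_{L/K}\bigl(\rho'(z)\bigr)$, this becomes
$$\mu\cdot N_{L/K}\bigl(\rho'(z)\bigr)=e\cdot N_{L/E_0}(z).$$
The left side lies in $K^*$ and the right side in $E_0^*$, so their common value $\lambda$ lies in $E_0\cap K=K_0$. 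Then $\mu=\lambda\cdot N_{L/K}\bigl(\rho'(z)\inv\bigr)$, so $\mu\lambda\inv\in N_{L/K}(L^*)$ and therefore $(d,\mu)_K=(d,\lambda)_K$, that is, $(E_0\tensor_{K_0}K,\sig\tensor 1,\mu)=(E_0,\sig,\lambda)\tensor_{K_0}K$, as required.

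The one genuinely delicate point is the third paragraph: noticing that the norm-one element $\mu/e$ should be viewed inside $L$ over the \emph{third} subfield $M_0$ (not over $K$ or $E_0$), where Hilbert~90 yields a coboundary that then repackages, via the two partial norms $N_{L/E_0}$ and $N_{L/K}$, into an element lying simultaneously in $K$ and in $E_0$, hence in $K_0$. Everything else --- the identification of the cyclic algebras with quaternion algebras, the projection-formula computation, and the degenerate case $E_0\isom K$ --- is routine. One could instead start from the exactness of ${}_2\Br(K_0)\xrightarrow{\mathrm{res}}{}_2\Br(K)\xrightarrow{\cores}{}_2\Br(K_0)$ to write $(d,\mu)_K=\mathrm{res}_{K/K_0}[B]$ and then adjust $[B]$ by a class split by $K$ so that it becomes split by $E_0$; this amounts to the same computation, and the direct route above seems cleaner.
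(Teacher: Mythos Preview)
Your proof is correct and follows essentially the same route as the paper: both reduce via the projection formula to the statement that $N_{K/K_0}(\mu)\in N_{E_0/K_0}(E_0^*)$, and then produce $\lambda\in K_0^*$ with $\mu\lambda^{-1}\in N_{E_0\otimes K/K}((E_0\otimes K)^*)$. The only difference is that the paper invokes \cite[Lemma~2.13]{wads} for this last step, whereas you supply a direct proof of exactly that lemma via Hilbert~90 over the third quadratic subfield $M_0$ of the biquadratic extension $L/K_0$. Your argument is therefore self-contained (and you are slightly more careful in treating the degenerate case $E_0\otimes_{K_0}K\cong K\times K$ separately), while the paper's version is shorter but depends on the external reference; in substance the two coincide.
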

 
 \begin{proof}  Suppose that cores$_{K/K_0}(E_0 \otimes K, \sigma, \mu)$ is trivial. 
 Since cores$_{K/K_0}(E_0 \otimes K, \sigma, \mu) = (E_0, \sigma, N_{K/K_0}(\mu))$,
 $N_{K/K_0}(\mu) \in N_{E_0/K_0}(E_0^*)$.  Let $\theta \in E_0$ be such that $N_{E_0/K_0}(\theta) =  N_{K/K_0}(\mu)$. 
Then, by (\cite[Lemma 2.13]{wads}), there exists $\lambda \in K_0$ such that 
$\lambda \mu \in N_{E_0 \otimes K/K}((E_0 \otimes K)^*)$. In particular  $(E_0 \otimes K, \sigma \otimes 1, \mu) =
 (E_0, \sigma, \lambda) \otimes_{K_0}K$.
 \end{proof}
 
  \begin{prop} 
  \label{odd-cores}
  Let $K_0$ be a field of characteristic  not 2.  Let $K/K_0$ be a quadratic field extension. Let $E/K$ be a cyclic extension of 
 odd  degree  and $\sigma$ a generator of Gal$(E/K)$.  Suppose that cores$_{K/K_0}(E,\sigma)$ is trivial. 
   Let $\mu \in K$. If cores$_{K/K_0}(E, \sigma, \mu)$ is trivial,  
 then there exists $\lambda  \in K_0$ such that $(E, \sigma, \mu) = (E, \sigma, \lambda) $. 
 \end{prop}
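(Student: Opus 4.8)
The plan is to mimic the structure of the proof of Proposition \ref{quat}, replacing the quadratic extension $E_0/K_0$ there by the odd cyclic extension $E/K$ and using the extra hypothesis that $\mathrm{cores}_{K/K_0}(E,\sigma)$ is trivial. First I would fix the generator $\sigma$ of $\mathrm{Gal}(E/K)$ and recall that corestriction of a cyclic algebra along $K/K_0$ is computed by the norm on the slot parameter: we have $\mathrm{cores}_{K/K_0}(E,\sigma,\mu) = (E,\sigma,N_{K/K_0}(\mu))$ once one knows that $E/K$ descends suitably. But here $E$ need not be of the form $E_0\otimes_{K_0}K$, so the first real task is to arrange a descent of $E$ itself. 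This is exactly where the hypothesis $\mathrm{cores}_{K/K_0}(E,\sigma)=0$ enters: since $[E:K]$ is odd, the restriction map $H^1(K_0,\Z/n)\to H^1(K,\Z/n)$ followed by corestriction is multiplication by $2$ on the source, which is invertible mod the odd $n$; combined with the vanishing of the corestriction of the class of $(E,\sigma)$ one concludes that the cyclic character cutting out $E$ descends to a cyclic character over $K_0$, i.e. there is a cyclic extension $E_0/K_0$ of the same degree with $E = E_0\otimes_{K_0}K$ and a compatible generator $\sigma_0$ of $\mathrm{Gal}(E_0/K_0)$ restricting to $\sigma$.

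Once $E = E_0 \otimes_{K_0} K$ is in hand, the argument proceeds as in Proposition \ref{quat}. From the triviality of $\mathrm{cores}_{K/K_0}(E,\sigma,\mu) = (E_0,\sigma_0,N_{K/K_0}(\mu))$ we get $N_{K/K_0}(\mu)\in N_{E_0/K_0}(E_0^*)$, so there is $\theta\in E_0$ with $N_{E_0/K_0}(\theta) = N_{K/K_0}(\mu)$. Then by (\cite[Lemma 2.13]{wads}) — applied to the pair $E_0/K_0$, $K/K_0$ exactly as in the proof of Proposition \ref{quat} — there exists $\lambda\in K_0$ with $\lambda\mu\in N_{E_0\otimes_{K_0}K/K}((E_0\otimes_{K_0}K)^*)$. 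Since adjoining a norm element does not change the cyclic algebra, $(E,\sigma,\mu) = (E,\sigma,\lambda\mu) = (E,\sigma,\lambda)$, where in the last equality $\lambda$ is viewed in $K$ via $K_0\subset K$; this is the desired conclusion.

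The main obstacle I anticipate is the descent step for $E$: one must be careful that the ``odd degree'' hypothesis is used correctly to invert the factor coming from $\mathrm{cores}\circ\mathrm{res}$, and that the generator $\sigma$ descends together with the field, not just the field abstractly — otherwise the identification $\mathrm{cores}_{K/K_0}(E,\sigma,\mu) = (E_0,\sigma_0,N_{K/K_0}(\mu))$ is not legitimate. If a direct cohomological descent of the character is awkward, an alternative is to avoid producing $E_0$ explicitly and instead argue entirely with the restriction–corestriction formalism for cyclic algebras over the (possibly non-Galois) extension $E/K$, using that $[E:K]$ is coprime to $2 = [K:K_0]$ so that the relevant transfer maps on $K$-theory or Brauer groups are isomorphisms after localizing away from $2$; the hypothesis $\mathrm{cores}_{K/K_0}(E,\sigma)=0$ then guarantees that the obstruction to this formalism producing a genuine cyclic algebra over $K_0$ vanishes. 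Either way the remaining steps are the routine norm manipulation and the invocation of (\cite[Lemma 2.13]{wads}) already used in Proposition \ref{quat}.
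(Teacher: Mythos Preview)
Your descent step fails. The hypothesis $\mathrm{cores}_{K/K_0}(E,\sigma)=0$ does \emph{not} say that the cyclic character $\chi\in H^1(K,\Z/n)$ of $(E,\sigma)$ lies in the image of restriction from $K_0$. The identity $\mathrm{cores}\circ\mathrm{res}=2$ you invoke shows only that $\mathrm{res}$ is injective; to see what $\mathrm{cores}(\chi)=0$ actually gives, restrict back to $K$: one finds $\chi+\bar\chi=0$, i.e.\ $\bar\chi=-\chi$, so the Galois conjugate of $(E,\sigma)$ is $(E,\sigma^{-1})$. Thus $E/K_0$ is Galois with \emph{dihedral} group, and $E$ does not arise from a cyclic $E_0/K_0$. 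Indeed, if such a cyclic $E_0$ existed then $\bar\chi=\chi$, whence $2\chi=0$ and, as $n$ is odd, $\chi=0$; so apart from the trivial case $E=K$ there is no $E_0$, your identity $\mathrm{cores}_{K/K_0}(E,\sigma,\mu)=(E_0,\sigma_0,N_{K/K_0}(\mu))$ has no meaning, and the appeal to \cite[Lemma 2.13]{wads} (which in any case is formulated for a biquadratic configuration) is moot. Your fallback ``restriction--corestriction formalism'' is too vague to constitute a proof.

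The paper's argument bypasses any descent of $E$. From $\bar\chi=-\chi$ and $\mathrm{cores}_{K/K_0}(E,\sigma,\mu)=0$ one gets, after restricting to $K$, that $(E,\sigma,\mu)+(E,\sigma,\bar\mu^{-1})=0$ in $\Br(K)$, hence $\mu\bar\mu^{-1}\in N_{E/K}(E^*)$. Since also $\mu^{n}\in N_{E/K}(E^*)$ automatically, one obtains $\mu^{n-2}\theta\in N_{E/K}(E^*)$ where $\theta=\mu\bar\mu\in K_0^*$, i.e.\ $(E,\sigma,\mu^{n-2})=(E,\sigma,\theta^{-1})$. As $n$ is odd, $\gcd(n-2,n)=1$; choosing $m$ with $(n-2)m\equiv 1\pmod n$ gives $(E,\sigma,\mu)=(E,\sigma,\theta^{-m})$, and $\lambda=\theta^{-m}\in K_0^*$ is the required element.
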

 
 \begin{proof} 
  Since 
 cores$_{K/K_0}(E,\sigma)$ is trivial, $(E,\sigma) \otimes_{K_0} K  \simeq  ( (E, \sigma), (E, \sigma^{-1}))$ over $K\otimes_{K_0} K \simeq K \times K$. 
Suppose cores$_{K/K_0}(E,\sigma, \mu) $ is trivial. Let $\bar{ }$ be the nontrivial automorphism of $K/K_0$. 
Then  $(E, \sigma, \mu) \otimes _{K_0} K \simeq ( (E, \sigma, \mu),   (E, \sigma^{-1}, \bar{\mu}))  = ( (E, \sigma, \mu),  (E, \sigma, \bar{\mu}^{-1}))$.
Since  cores$_{K/K_0}(E,\sigma, \mu) $ is trivial, 
$(E, \sigma, \mu) \otimes (E, \sigma, \bar{\mu}^{-1}) = 0$.    Thus 
$\mu \bar{\mu}^{-1} \in N_{E/K}(E^*)$   and hence $\mu^{-1} \bar{\mu} \in N_{E/K}(E^*)$. 

Let $n = [E : K]$ with $n$ odd.  If $n = 1$, then  $\lambda = 1$ has the required property. 
Suppose that $n \geq 2$. Then  $ \mu^n \mu^{-1} \bar{\mu} \in N_{E/K}(E^*)$.  Let $\theta = \mu \bar{\mu} \in K_0$. 
 Since  $\mu^n \mu^{-1} \bar{\mu} = \mu^{n-2} \mu \bar{\mu} = \mu^{n-2} \theta \in   N_{E/K}(E^*)$,
 $(E, \sigma, \mu^{n-2} ) = (E,\sigma,\theta^{-1})$.  
 Since $n$ is odd, $n-2$ is coprime to $n$ and hence there exists $1 \leq m \leq n-1$ such that 
 $(n-2)m \equiv 1$ modulo $n$.  We have 
 $$(E, \sigma, \theta^{-m}) = (E, \sigma, \theta^{-1})^m = (E, \sigma,  \mu^{n-2})^m = (E, \sigma,  \mu^{(n-2)m}) = (E, \sigma,  \mu).$$
 Hence $\lambda = \theta^{-m}$ has the required property.   
 \end{proof}
 
 \section{The Rost invariant} We recall   some facts on the  Rost invariant from (\cite[\S 31.B]{KMRT}). 
  \label{rost}
  
 Let $K$ be a field and $G$ a absolutely  simple  simply connected  linear algebraic group  over $K$. 
Rost defines an invariant,    
called the {\it Rost invariant},  
$$R_G : H^1(K, G) \to H^3(K, \Q/\Z(2)), $$ 
which is functorial for  field extensions of $K$.  
  
Let $G$ and $G'$ be two absolutely simple and simply connected groups over $K$ with a $K$-homomorphism
   $\phi : G \to G'$.  Then 
there exists a unique  positive integer $n_\phi$ such that  the following diagram 
$$
\begin{array}{cccc}
H^1(K, G)  & \buildrel{R_G}\over{\to} & H^3(K, \Q/\Z(2)) \cr
\downarrow \phi  & & \downarrow n_\phi \cr
H^1(K, G')  & \buildrel{R_{G'}}\over{\to} & H^3(K, \Q/\Z(2)) 
\end{array}
$$
commutes (\cite[p. 436]{KMRT}). In particular if $\phi$ is an isomorphism, then $n_\phi = 1$. 
In fact from the proof of the existence of such an  $n_\phi$ it follows that 
for any field extension $E/K$ we have  $R_{G'} \phi_E = n_\phi R_G$.

 Let $K$ be a field of characteristic not 2 and  $L/K$  a quadratic  extension. Let  $A$ be 
 a central simple algebra over $L$ with a
 $L/K$-involution $\tau$.  Let $(V,h)$ be a hermitian form over $(A,\tau)$. 
 Let $U(A,\tau, h)$ be the unitary group of $h$.  
 We have 
 $$U(A, \tau, h)(K)  = \{ f \in End_A(V) \mid h(f(v)) = h(v) {\rm ~for~ all~} v \in V \}.  $$ 
 Let $Nrd : End(V) \to L^*$ be the reduced norm map and 
 $L^{*1} = \{ a \in L^* \mid N_{L/K}(a) = 1 \}$. 
 Let $f \in U(A, \tau, h)(K)$. Then $Nrd(f) \in L^{*1}$.  
 The map $Nrd : End(V) \to L^ *$ induces a homomorphism 
 of algebraic groups $U(A, \tau, h) \to R^1_{L/K}(\G_m)$. 
 Let $SU(A, \tau, h)$ be the kernel of the homomorphism $U(A, \tau, h) \to R^1_{L/K}(\G_m)$.
  The we have 
  $$SU(A, \tau, h)(K)  = \{ f \in U(A, \tau, h) \mid Nrd(f) = 1  \}.$$
The  short  exact sequence of algebraic groups 
 $$ 1 \to SU(A, \tau, h) \to U(A, \tau, h) \to R^1_{L/K}\G_m \to 1 $$
  induces a  long exact sequence of cohomology sets   ($\star$) 
 $$  L^{*1} \buildrel{\delta}\over{\to} H^1(K, SU(A, \tau, h)) \to H^1(K, U(A, \tau, h)) \buildrel{d_h}\over{\to}
  H^1(K, R^1_{L/K}\G_m) = K^*/N_{L/K}(L^*).  $$
 The set $H^1(K, U(A, \tau, h))$ is in bijection with the set of isomorphism classes of hermitian forms over $(A, \tau)$ of 
 dimension equal to the dimension of $h$ and the map $ d_h 
 : H^1(K, U(A, \tau, h)) \to H^1(K, R^1_{L/K}\G_m) = K^*/N_{L/K}(L^*)$
corresponds to  the relative discriminant of hermitian forms, namely,   $d_h(h') = disc(h) disc(h')$ 
(\cite[Lemma 2.1.2]{BP}). 

Since $SU(A, \tau, h)$ is an  absolutely simple simply connected linear algebraic group, we have the Rost invariant map 
$$R_{SU(A, \tau, h)} : H^1(K, SU(A, \tau, h) \to H^3(K, \Q/\Z(2)). $$ 

Let $n = ind(A)$. Suppose that $2n$ is coprime to char$(K)$.
Then, using a result of Merkurjev-Suslin(\cite{MS}) on the divisibility of elements in $H^2(K, \Q/\Z(2))$, 
$H^3(K, \mu_{2n}^{\otimes 2})$ is identified with the subgroup of elements of 
exponent $2n$ in  $H^3(K,\Q/\Z(2))$  (cf. \cite[p. 436]{KMRT}). 
By (\cite[Proposition 31.43]{KMRT}),  the Rost invariant for $SU(A, \tau, h)$ takes values in the 
subgroup $H^3(K, \mu_{2n}^{\otimes 2})$ of $H^3(K,\Q/\Z(2))$. Hence we have 
$$R_{SU(A, \tau, h)} : H^1(K, SU(A, \tau, h) \to H^3(K, \mu_{2n}^{\otimes 2} ). $$

For any $\theta \in L^{*1}$,
 $R_{SU(A, \tau, h)} (\delta (\theta)) = cores_{L/K}(\mu \cdot A) $, where 
  $\theta = \mu \tau(\mu)^{-1}$ with $\mu \in L^{*1}$ (\cite[Appendix]{PP}).
 Let $h'$ be a hermitian form over $(A, \tau)$ with dim$(h') = $ dim$(h)$.
 Then $h'$ represents an element $H^1(K, U(A,\tau, h))$. Suppose that  $d_h(h') = 1$. Then there exists 
 $\zeta \in H^1(K, SU(A, \tau, h))$ which maps to $h'$ in $H^1(K, U(A,\tau, h))$. 
   It follows that 
the class of  $R_{SU(A, \tau, h)}(\zeta)$  in $H^3(K, \mu_{2n}^{\otimes 2} )/ cores_{L/K}(L^* \cdot A)$ is independent 
of the choice of the lift $\zeta$ of $h'$ (cf. \cite[p. 298]{preeti}) and  is
called the {\it Rost invariant of $h'$} relative to $h$ and denoted by $R_h(h')$. 

Let $h_0$ be the hyperbolic form over $(A, \tau)$ of dimension 2$m$. 
Let $h$ be a hermitian form over $(A, \tau)$ of dimension $2m$. Then 
$h$ represents an element in $H^1(K, U(A, \tau, h_0))$.
Suppose that $d_{h_0}(h) = 1$. Then $R_{h_0}(h)$ is denoted by $R(h)$ and is called 
the {\it Rost invariant} of $h$. 

Let $h$ and $h'$ be two hermitian forms over $(A,\tau)$ with 
dim$(h) = $ dim$(h')$. Suppose that $d_h(h') = 1$. 
Let $h_0$ be the hyperbolic form over $(A, \tau)$ of dimension 2dim$(h)$.
Since  $d_{h_0}(h \perp - h') = d_{h}(h') $ (\cite[Lemma 2.1.2]{BP}), we have $d_{h_0}(h \perp - h') = 1$.
Hence $R(h\perp -h')$ is defined.

 \section{Some computations of the Rost invariant}
 \label{rost-comp}
 Let $L = K(\sqrt{\theta})$ and   $h$  a hermitian form over $L/K$. Then $h = <a_1, \cdots, a_n>$ for some $a_i \in K^*$.
 Then the trace form $q_h$ of $h$ is given by $<1, -\theta><a_1, \cdots , a_n>$ is a quadratic form over $K$.
 The hermitian form $h$ is hyperbolic if and only is the trace form $q_h$ is hyperbolic over $K$ (\cite[Theorem 1.1]{Sch}).
Suppose  dim$(h)$ is even and disc$(h) = 1$. Then $ a = (-1)^{n(n-1)/2} \prod_i a_i \in N_{L/K}(L^*)$ (\cite[p. 350]{Sch}).
Since the Clifford algebra  $C(q_h)$ of $q_h$ is given by the quaternion algebra $(\theta,  a)$ (\cite[p. 350]{Sch}), 
$C(q_h)$ is trivial.

\begin{lemma} 
\label{arason}
 (\cite[p. 437]{KMRT})
 We have $R(h) = R(q_h)$ and $R(q_h)$ is the $e_3$ invariant of Arason .
\end{lemma}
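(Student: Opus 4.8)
The plan is to deduce both assertions from the functoriality of the Rost invariant recalled in \S\ref{rost}, applied to the trace-form homomorphism, together with the known description of the Rost invariant of a split spin group. First note that since $A=L$ is split over $L$ we have $\mathrm{ind}(A)=1$, so the subgroup $\cores_{L/K}(L^{*}\cdot A)$ modulo which $R_{h_{0}}$ is a priori defined is trivial, and $R(h)$ is a genuine element of $H^{3}(K,\mu_{2}^{\otimes 2})\subseteq H^{3}(K,\Q/\Z(2))$. Write $n=\dim(h)$, which is even, and let $h_{0}$ be the hyperbolic hermitian form over $L/K$ of dimension $n$; its trace form $q_{h_{0}}$ is a hyperbolic quadratic form over $K$ of dimension $2n$. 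I would first record that $q_{h}\in I^{3}K$: it has even dimension; its signed discriminant equals $\theta^{n}$ in $K^{*}/K^{*2}$, hence a square because $n$ is even, so $q_{h}\in I^{2}K$; and by the computation already made in this section its Clifford invariant is the class of $(\theta,a)$ with $a=(-1)^{n(n-1)/2}\prod_{i}a_{i}\in N_{L/K}(L^{*})$, which is split. Consequently $q_{h}$ has the same dimension, discriminant and Clifford invariant as $q_{h_{0}}$, so it represents a class in $H^{1}(K,\Group{SO}(q_{h_{0}}))$ that lifts to $H^{1}(K,\Group{Spin}(q_{h_{0}}))$, and both $R(q_{h})$ and the Arason invariant $e_{3}(q_{h})\in H^{3}(K,\Z/2)$ are defined.

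Next I would construct the homomorphism. Any $L$-linear isometry of $(V,h_{0})$ is in particular a $K$-linear isometry of the trace quadratic space $(V,q_{h_{0}})$, giving a morphism of algebraic groups $U(A,\tau,h_{0})\to\Group{O}(q_{h_{0}})$; for $f\in SU(A,\tau,h_{0})$ the $K$-determinant is $N_{L/K}(Nrd(f))=N_{L/K}(1)=1$, so this restricts to $SU(A,\tau,h_{0})\to\Group{SO}(q_{h_{0}})$, and since $SU(A,\tau,h_{0})$ is simply connected it lifts uniquely to $\nu\colon SU(A,\tau,h_{0})\to\Group{Spin}(q_{h_{0}})$. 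Let $\zeta\in H^{1}(K,SU(A,\tau,h_{0}))$ be a lift of $h$. Because $H^{1}(K,U(A,\tau,h_{0}))\to H^{1}(K,\Group{O}(q_{h_{0}}))$ sends a hermitian form $h'$ to its trace form $q_{h'}$, the image of $\nu_{*}(\zeta)$ in $H^{1}(K,\Group{SO}(q_{h_{0}}))$ is the class of $q_{h}$. Now by the property recalled in \S\ref{rost} there is a positive integer $n_{\nu}$ with $R_{\Group{Spin}(q_{h_{0}})}(\nu_{*}(\zeta))=n_{\nu}\,R_{SU(A,\tau,h_{0})}(\zeta)=n_{\nu}\,R(h)$.

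The hard part is the identity $n_{\nu}=1$: this is the computation of the Dynkin index of the trace-form representation of the special unitary group inside the spin group. It is a combinatorial assertion about root data, hence insensitive to the coprimality hypotheses, and it is exactly what is recorded on \cite[p.~437]{KMRT} (see also \cite[\S 31]{KMRT}); I would cite it rather than reprove it. Granting $n_{\nu}=1$ we get $R(h)=R_{\Group{Spin}(q_{h_{0}})}(\nu_{*}(\zeta))=R(q_{h})$, since $\nu_{*}(\zeta)$ maps to the class of $q_{h}$. Finally, to finish I would invoke the standard description of the Rost invariant of a split spin group: evaluated on the class of a quadratic form $q$ with trivial discriminant and trivial Clifford invariant it gives the Arason invariant $e_{3}(q)$, \cite[\S 31.B]{KMRT}. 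Applied to $q=q_{h}$ this yields $R(q_{h})=e_{3}(q_{h})$, and combining the two equalities gives $R(h)=R(q_{h})=e_{3}(q_{h})$. Everything except $n_{\nu}=1$ is bookkeeping with the long exact sequence of \S\ref{rost} and the definitions of the invariants.
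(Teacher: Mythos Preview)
The paper does not give its own proof of this lemma; it is stated as a fact with the citation \cite[p.~437]{KMRT} and nothing more. Your proposal is a correct and careful unpacking of what lies behind that citation: you construct the trace-form embedding $\nu\colon SU(h_{0})\to\Group{Spin}(q_{h_{0}})$, invoke the functoriality of the Rost invariant under group homomorphisms with multiplier $n_{\nu}$, cite the Dynkin-index computation $n_{\nu}=1$ from \cite{KMRT}, and finish with the identification of the Rost invariant of a split spin group with Arason's $e_{3}$ from \cite[\S 31.B]{KMRT}. This is exactly the standard argument that the paper's bare citation points to, so there is no divergence in approach---you have simply written out what the authors left to the reference.
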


 Suppose $A = M_n(D)$ for some central simple algebra $D$ over $L$ and $\tau$ a $L/K$-involution on $A$. 
 Then $D$ has a $L/K$-involution 
 $\tau_0$    and $\tau$ is the adjoint involution given by a hermitian 
 form $h_0$ over $(D, \tau_0)$ of dimension $n$. 
 Further the hermitian form $h$ on $(A, \tau)$ corresponds to an hermitian form $\tilde{h}$ over 
 $(D, \tau_0)$ by Morita equivalence.  
 Let $h'$ be a hermitian form over $(A, \tau)$ representing an element in $H^1(K, U(A, \tau, h))$.
Let   $\tilde{h}'$ be the  hermitian form over $(D,\tau_0)$ corresponding to $h'$ under  the  
 Morita equivalence. 
 
 \begin{lemma}
 \label{morita-rost-inv}   If $d_h(h') = 1$, then $d_{\tilde{h}}(\tilde{h}')   = 1$  and   $R_h(h') =  R_{\tilde{h}}(\tilde{h}')$.  
 \end{lemma}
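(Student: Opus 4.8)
The plan is to reduce the statement entirely to the compatibility of the two unitary groups and of the Rost invariant under the isomorphism of groups induced by Morita equivalence. Concretely, the Morita equivalence between hermitian forms over $(A,\tau)$ of dimension equal to $\dim(h)$ and hermitian forms over $(D,\tau_0)$ of dimension equal to $\dim(\tilde h)$ is realized by a canonical $K$-isomorphism of algebraic groups $\phi \colon U(A,\tau,h) \xrightarrow{\sim} U(D,\tau_0,\tilde h)$ which restricts to an isomorphism $SU(A,\tau,h) \xrightarrow{\sim} SU(D,\tau_0,\tilde h)$; on $H^1$ it sends the class of $h'$ to the class of $\tilde h'$. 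First I would record that under this isomorphism the discriminant maps $d_h$ and $d_{\tilde h}$ agree: the relative discriminant is, by definition (see \S\ref{prelims}), computed after passing to the division algebra $D$ via Morita equivalence, so $d_h(h') = d_{\tilde h}(\tilde h')$ as elements of $K^*/N_{L/K}(L^*)$. In particular $d_h(h')=1$ forces $d_{\tilde h}(\tilde h')=1$, which is the first assertion.

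Granting this, the exact sequences $(\star)$ for the two pairs fit into a commutative ladder via $\phi$, since $\phi$ commutes with the reduced-norm homomorphisms to $R^1_{L/K}\G_m$ (the reduced norm is Morita-invariant). Hence a lift $\zeta \in H^1(K, SU(A,\tau,h))$ of $h'$ is carried by $\phi_*$ to a lift $\tilde\zeta \in H^1(K, SU(D,\tau_0,\tilde h))$ of $\tilde h'$. Now I would invoke the functoriality of the Rost invariant recalled in \S\ref{rost}: since $\phi$ is an \emph{isomorphism} of absolutely simple simply connected groups, the multiplier $n_\phi$ equals $1$, so $R_{SU(D,\tau_0,\tilde h)}(\phi_*(\zeta)) = R_{SU(A,\tau,h)}(\zeta)$ in $H^3(K,\Q/\Z(2))$.

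Finally I would check that this equality descends to the quotients by which $R_h$ and $R_{\tilde h}$ are defined, i.e.\ that $\phi_*$ carries the ambiguity subgroup $\cores_{L/K}(L^*\cdot A)$ onto $\cores_{L/K}(L^*\cdot D)$. But $A$ and $D$ are Brauer-equivalent over $L$, so $L^*\cdot A = L^*\cdot D$ as subsets of $H^3(L,\mu_{2n}^{\otimes 2})$, and hence their corestrictions to $K$ coincide; moreover $\mathrm{ind}(A)=\mathrm{ind}(D)$, so the two invariants even take values in the same group $H^3(K,\mu_{2n}^{\otimes 2})$. Therefore the classes of $R_{SU(A,\tau,h)}(\zeta)$ and $R_{SU(D,\tau_0,\tilde h)}(\tilde\zeta)$ in $H^3(K,\mu_{2n}^{\otimes 2})/\cores_{L/K}(L^*\cdot A)$ agree, which is exactly $R_h(h') = R_{\tilde h}(\tilde h')$.

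The only real point requiring care — and the step I expect to be the main obstacle — is pinning down precisely that the group isomorphism $\phi$ induced by Morita equivalence is compatible on $H^1$ with the set-theoretic bijection on hermitian forms \emph{and} with the discriminant maps; once this bookkeeping is in place, the equality of Rost invariants is a formal consequence of $n_\phi = 1$. This compatibility is essentially contained in \cite[\S 2]{BP}, so in the write-up I would cite it rather than reprove it.
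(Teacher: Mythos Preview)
Your proposal is correct and follows essentially the same approach as the paper's proof: both arguments rest on the canonical identification $U(A,\tau,h)\simeq U(D,\tau_0,\tilde h)$ (restricting to the special unitary groups) and the resulting commutative ladder of exact sequences $(\star)$, from which the equality of relative discriminants and of Rost invariants is read off. You have spelled out two points the paper leaves implicit—the use of $n_\phi=1$ for an isomorphism and the fact that the ambiguity subgroups $\cores_{L/K}(L^*\cdot A)$ and $\cores_{L/K}(L^*\cdot D)$ coincide because $A$ and $D$ are Brauer equivalent—but this is the same argument.
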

 \begin{proof}
 We have canonical identification  of algebraic groups 
 $U(A, \tau, h) = U(D, \tau_0, \tilde{h})$ taking $SU(A, \tau, h)$ to $ SU(D, \tau_0, \tilde{h})$ and making the 
 following diagram commutative 
  $$ 
  \begin{array}{ccccccccc}
  1 & \to &  SU(A, \tau, h) &  \to & U(A, \tau, h)  & \to  & R^1_{L/K}\G_m & \to &  1 \cr 
  & & \downarrow & & \downarrow & & \downarrow id \cr 
  1 & \to &  SU(D, \tau_0, \tilde{h}) &  \to & U(D, \tau_0, \tilde{h})  & \to  & R^1_{L/K}\G_m & \to &  1 
  \end{array}
   $$
   with  vertical maps   isomorphisms.  We have the induced  commutative diagram  
   $$
     \begin{array}{ccccccccc}
  H^1(K, SU(A, \tau, h))  &  \to & H^1(K,  U(A, \tau, h) )  & \to  & H^1(K, R^1_{L/K}\G_m )   \cr 
   \downarrow & & \downarrow & & \downarrow id \cr 
  H^1(K,  SU(D, \tau_0, \tilde{h}))  &  \to & H^1(K,  U(D, \tau_0, \tilde{h}))  & \to  & H^1(K, R^1_{L/K}\G_m)  
  \end{array}
   $$
   with vertical maps isomorphisms. Since the image of $h'$ under the isomorphism $H^1(K, U(A,\tau, h)) \to H^1(K, U(D, \tau_0, \tilde{h})$ is $\tilde{h}'$, 
 it follows 
 that dim$(\tilde{h}') = $ dim$(\tilde{h})$,   $d_{\tilde{h}}(\tilde{h}') = d_h(h') = 1$  and   $R_h(h') =  R_{\tilde{h}}(\tilde{h}')$.  
\end{proof}

  The following is extracted from the proof of (\cite[Theorem 10.1]{PP}). 

 \begin{prop}
 \label{sumrost}  If dim$(h) = $ dim$(h')$ and $d_h(h') = 1$, then 
 $R(h \perp -h') =  R_h(h')$. 
 \end{prop}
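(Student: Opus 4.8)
The plan is to show that a single $SU$-torsor computes both invariants, and that the relevant Rost multiplier is $1$. Write $m=\dim(h)$ and take as model for the $2m$-dimensional hyperbolic form over $(A,\tau)$ the orthogonal sum $h_0=h\perp(-h)$; it is hyperbolic, and a hyperbolic hermitian form of a given dimension over $(A,\tau)$ is unique up to isometry, so we may use $h_0$ in the definition of $R$. Since $d_h(h')=1$, \cite[Lemma 2.1.2]{BP} gives $d_{h_0}(h\perp-h')=d_h(h')=1$, so $h\perp-h'$ admits a lift $\xi\in H^1(K,SU(A,\tau,h_0))$, and $R(h\perp-h')$ is the class of $R_{SU(A,\tau,h_0)}(\xi)$ in $H^3(K,\mu_{2n}^{\otimes2})/\cores_{L/K}(L^*\cdot A)$, independently of $\xi$.

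First I would produce such a lift from a lift of $h'$. Choose $\zeta\in H^1(K,SU(A,\tau,h))$ lying over $h'$ (possible as $d_h(h')=1$), so that $R_h(h')$ is the class of $R_{SU(A,\tau,h)}(\zeta)$ in the same quotient. As algebraic groups $U(A,\tau,-h)=U(A,\tau,h)$ and $SU(A,\tau,-h)=SU(A,\tau,h)$, the defining conditions being unaffected on replacing $h$ by $-h$; moreover the twisted form of $-h$ by any $SU(A,\tau,h)$-torsor is the negative of the twisted form of $h$ by that torsor. Let $\iota\colon SU(A,\tau,h)=SU(A,\tau,-h)\hookrightarrow SU(A,\tau,h_0)$ be the stabilization embedding attached to $h_0=h\perp(-h)$, namely $g\mapsto\id\oplus g$ acting on the second summand and trivially on the first. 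Then $\iota_\ast\zeta$ twists $h_0$ only in the second summand, replacing $-h$ by $-h'$; hence $\iota_\ast\zeta$ lies over $h\perp-h'$ in $H^1(K,U(A,\tau,h_0))$, and we may take $\xi=\iota_\ast\zeta$.

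By the functoriality of the Rost invariant recalled in \S\ref{rost}, there is a positive integer $n_\iota$ with $R_{SU(A,\tau,h_0)}(\iota_\ast\zeta)=n_\iota\,R_{SU(A,\tau,h)}(\zeta)$, valid over every field extension of $K$. Combining with the previous paragraph, modulo $\cores_{L/K}(L^*\cdot A)$ we obtain $R(h\perp-h')=n_\iota\,R_h(h')$, so the proposition is equivalent to the statement $n_\iota=1$.

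The only substantive point is therefore $n_\iota=1$; everything else is bookkeeping with the exact sequence $(\star)$ of \S\ref{rost} and functoriality of $R$. Since the Rost multiplier is unchanged under base change, it may be computed after extending scalars to a field $K'$ that splits $A$ and over which $L$ stays a field. Over $K'$ the map $\iota$ becomes the standard inclusion of special unitary groups of hermitian forms over the quadratic field extension $LK'/K'$ given by enlarging a hermitian form by an orthogonal summand, and for such stabilization maps the Rost multiplier is $1$; this value is forced by the normalization of the Rost invariant for groups of type ${}^2A$ (cf.\ \cite[\S 31.B]{KMRT}). Alternatively one may restrict $\iota$ further to a subgroup $SL_1(B)$, on which the Rost invariant is explicitly $(a)\cdot[B]$ and the multiplier is manifestly $1$. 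Granting $n_\iota=1$, we conclude $R(h\perp-h')=R_h(h')$ in $H^3(K,\mu_{2n}^{\otimes2})/\cores_{L/K}(L^*\cdot A)$.
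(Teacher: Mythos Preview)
Your approach coincides with the paper's: both pick a lift $\zeta\in H^1(K,SU(A,\tau,h))$ of $h'$, push it forward along the stabilization embedding into $SU(A,\tau,h\perp-h)$, observe that the image lifts $h\perp-h'$ (up to the harmless swap $h'\perp-h$), and reduce everything to proving that the Rost multiplier $n_\iota$ of this embedding equals $1$.

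The difference lies in how $n_\iota=1$ is verified. The paper's argument is concrete: it passes to the function field of $R_{L/K}(\SB(A))$ to split $A$ while keeping the quadratic extension intact, then uses Morita equivalence (Lemma~\ref{morita-rost-inv}) and Lemma~\ref{arason} to identify the Rost invariant with the Arason invariant of the trace quadratic form, where the analogous identity $R(q_h\perp-q_{h'})=R_{q_h}(q_{h'})$ is already in the literature \cite[31.42, Example~31.44]{KMRT}. Finally, to deduce $n_\iota=1$ from the resulting equality $n_\iota R(\zeta)=R(\zeta)$ over the splitting field, the paper allows a further rational function field extension so that nontrivial values of $R(\zeta)$ actually occur. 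Your justification, by contrast, appeals to an unspecified ``normalization'' in \cite[\S 31.B]{KMRT} and to a restriction to $SL_1(B)$ that is not made precise (over your splitting field $K'$ the algebra is already split, so it is unclear which $B$ you mean or how the composite multiplier is read off). The missing step is genuinely recoverable---for instance by computing the Dynkin index of the block-diagonal embedding over an algebraic closure---but as written it is a gap; the paper's reduction to the known quadratic-form statement is the cleanest way to close it.
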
 
  
  \begin{proof} 
  Note that since $h \perp -h$ is hyperbolic, we have 
  $R(h \perp - h') = R_{h \perp -h}(h \perp -h')$. 
  Let $\zeta \in H^1(K, SU(A, \tau, h)$  which maps to $h'$ in $H^1(K, U(A, \tau, h))$.
Then, by definition, we have $R_h(h')  = R_{SU(A, \tau, h)}(\zeta) \in H^3(K, \mu_{2n}^{\otimes 2})/ cores_{L/K}(L^* \cdot A)$.

  Let $\eta : U(A, \tau, h) \to U(A, \tau, h \perp -h)$ be the homomorphism given by 
  $f \mapsto (f, 1)$.  Let $\eta^* : H^1(K, U(A, \tau, h)) \to H^1(K, U(A, \tau, h\perp -h))$ be the induced map. 
  Then the image of $h'$ under $\eta^*$ corresponds to $h' \perp -h$. 
  The map $\eta$ takes $SU(A, \tau, h)$ to $SU(A, \tau, h \perp -h)$ and hence induces a map
  $$\eta^* : : H^1(K, SU(A, \tau, h)) \to H^1(K, SU(A, \tau, h\perp -h)).$$  By (\cite[p. 436]{KMRT}), 
  there exists a positive integer $n_h$ such that $R(\eta^*(\zeta)) = n_hR(\zeta)$. 
  Since  $\zeta$ is a lift of $h'$, $\eta^*(\zeta)$ is a lift of $ \eta^*(h') = h' \perp -h$.

  Suppose $A$ is a split algebra.   
  Then, by Morita equivalence, $h$  and $h'$ corresponds to hermitian forms  $\tilde{h}$ and $\tilde{h}'$
  over $L/K$,  $R_{h}(h') = R_{\tilde{h}}(\tilde{h}')$ and $R(h \perp -h') = R(\tilde{h} \perp -\tilde{h}')$ (\ref{morita-rost-inv}). 
  Hence, replacing $A$ by $L$, $h$ and $h'$ by $\tilde{h}$ and $\tilde{h}'$,  we assume that $A = L$. 
  Let $q_h$ and $q_{h'}$ be the trace forms of $h$ and $h'$ respectively. By (\cite[31.42]{KMRT}), 
  we have $R(q_h \perp -q_{h'}) = R_{q_h}(q_{h'})$. Hence, by (\cite[Example 31.44 ]{KMRT}), $R(h \perp -h') = R_h(h')$

  Suppose $A$ is not split. 
  Let  $SB(A)$ be the Severi-Brauer variety of $A$ over $K$ and 
  $X = R_{L/K}(SB(A))$.
  Then $A\otimes_{K}K(X)$ is a split algebra (\cite[\S 4]{Scheiderer}).
  Hence, by the split case, we have $R(\eta^*(\zeta))_{K(X)}  = R(\zeta)_{K(X)}$.
  If necessary, going to a rational function field extension, 
  we assume that there are elements $\zeta \in H^1(K(X), SU(A, \tau, h))$
  with $R(\zeta)$ non trivial. 
  Hence  $n_h= 1$ and  $R(h \perp -h') =  R_h(h')$.
  \end{proof}
  
  \begin{prop}
 \label{rost-iso}   If dim$(h)$, dim$(h')$ are  even and $d(h) =  d(h') =  1$, 
 $R(h \perp h') =  R(h) + R(h')$.  In particular if  $h'$ is hyperbolic, then $R(h \perp h') = R(h)$. 
 \end{prop}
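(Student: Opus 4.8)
The plan is to reduce the additivity statement $R(h \perp h') = R(h) + R(h')$ to the previously established Proposition \ref{sumrost}, which gives $R(g \perp -g') = R_g(g')$ whenever $\dim(g) = \dim(g')$ and $d_g(g') = 1$. The idea is to introduce an auxiliary hyperbolic form and rewrite $h \perp h'$ as a difference of two forms of equal dimension. Concretely, let $g_0$ be the hyperbolic form over $(A,\tau)$ of dimension $\dim(h)$, so that $-g_0 \simeq g_0$ and $h' \perp (-g_0)$ has the same dimension as $h \perp g_0$. Then $h \perp h'$ differs from $(h \perp g_0) \perp -(g_0 \perp -h')$ only by a hyperbolic summand, hence (using that adding a hyperbolic form does not change the class, as recorded at the end of \S\ref{rost}) one should have $R(h \perp h') = R\big((h \perp g_0) \perp -(g_0 \perp -h')\big)$. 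One must check that the discriminant hypotheses needed to define each Rost invariant are met: since $d(h) = d(h') = 1$ and discriminants are additive in the relevant sense (\cite[Lemma 2.1.2]{BP}), all the forms appearing have trivial discriminant relative to the appropriate hyperbolic base, so every symbol $R(\,\cdot\,)$ and $R_\cdot(\cdot)$ written is well-defined.

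The second step is to apply Proposition \ref{sumrost} to the pair $h \perp g_0$ and $g_0 \perp -h'$, which have equal dimension $2\dim(h)$ and satisfy $d_{h \perp g_0}(g_0 \perp -h') = 1$ by the additivity of relative discriminants together with $d(h) = d(h') = 1$. This yields
\[
R\big((h \perp g_0) \perp -(g_0 \perp -h')\big) = R_{h \perp g_0}(g_0 \perp -h').
\]
Then I would identify the right-hand side: since $h \perp g_0$ and $g_0 \perp h'$ both have trivial discriminant and $g_0$ is hyperbolic, the unitary group $U(A,\tau, h \perp g_0)$ contains $U(A,\tau,h)$ via $f \mapsto (f,1)$, and a lift $\zeta$ of $h'$ in $H^1(K, SU(A,\tau,h))$ pushes forward to a lift of $g_0 \perp -h'$ (up to the sign convention built into the definition of $R_h$). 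Using the compatibility of Rost invariants under such homomorphisms (\cite[p.~436]{KMRT}) and the argument already used in the proof of \ref{sumrost} to show the relevant multiplier $n_h$ equals $1$, one gets $R_{h \perp g_0}(g_0 \perp -h') = R_{g_0}(g_0 \perp -h') = R(g_0 \perp -h' \perp g_0)$, and $g_0 \perp g_0$ being hyperbolic this is $R(-h')$. A parallel computation, or a direct symmetry argument, should identify the contribution of $h$, so that the total is $R(h) + R(h')$ after absorbing the sign; note $R(-h') = R(h')$ since additivity of the Rost invariant on a hyperbolic orthogonal sum forces $R(h') + R(-h') = R(h' \perp -h') = 0$ — but this is circular, so instead I would deduce $R(-h') = -R(h')$ directly from \ref{sumrost} applied to the hyperbolic form $h' \perp -h'$, then combine.

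A cleaner route, which I would ultimately prefer, is the following. Apply \ref{sumrost} three times: to $(h \perp h', h_1)$, to $(h, h_1')$, and to $(h', h_1'')$ for suitable hyperbolic forms $h_1, h_1', h_1''$ of the correct dimensions, converting every absolute Rost invariant into a relative one of the form $R_{\text{hyp}}(\cdot)$; then use the functoriality of the Rost invariant under the block-diagonal inclusions of special unitary groups, $SU(A,\tau,g) \times SU(A,\tau,g') \to SU(A,\tau, g \perp g')$, each of which has Rost multiplier $1$ (shown as in the last paragraph of the proof of \ref{sumrost}, by passing to a function field where $A$ splits and trace forms make the multiplier visibly $1$, then to a generic splitting field of the Severi–Brauer variety). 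Since in the split case the statement reduces via Morita equivalence (\ref{morita-rost-inv}) and Lemma \ref{arason} to the classical additivity $e_3(q \perp q') = e_3(q) + e_3(q')$ of the Arason invariant for quadratic forms of trivial discriminant and trivial Clifford invariant, the split case is known; the multiplier-$1$ argument then transports it to the general case. The main obstacle is bookkeeping the sign conventions in the definition of $R_h(h')$ and tracking which hyperbolic form is the implicit base point at each stage, so that the three applications of \ref{sumrost} glue consistently; once the multipliers are all $1$ and the split case is in hand, no further input is needed. The final assertion — that $h'$ hyperbolic implies $R(h \perp h') = R(h)$ — is then immediate, since a hyperbolic form has trivial Rost invariant (it is the class of the trivial torsor).
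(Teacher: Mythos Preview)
Your second ``cleaner route'' is the paper's argument, but you have wrapped it in unnecessary scaffolding. The paper's proof is two sentences: reduce, exactly as in the proof of Proposition~\ref{sumrost} (Morita equivalence in the split case, passage to the function field of $R_{L/K}(\SB(A))$ together with the Rost-multiplier-equals-$1$ observation in the non-split case), to the Arason invariant $e_3$ of the trace quadratic forms over $F_0$; since $e_3 : I^3(F_0) \to H^3(F_0,\Q/\Z(2))$ is a group homomorphism, $R(h\perp h') = R(h) + R(h')$ follows immediately.

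Your first approach --- manufacturing auxiliary hyperbolic forms and trying to extract additivity from repeated formal applications of Proposition~\ref{sumrost} --- is not needed, and as you yourself noticed it drifts toward circularity (you end up wanting $R(h') + R(-h') = 0$, which is the very statement you are proving). Even in your second approach, the phrase ``apply \ref{sumrost} three times'' is a detour: there is no need to convert every absolute invariant to a relative one and back. The essential content is exactly what you isolate at the end: over the generic splitting field the three invariants $R(h)$, $R(h')$, $R(h\perp h')$ become $e_3(q_h)$, $e_3(q_{h'})$, $e_3(q_{h\perp h'}) = e_3(q_h \perp q_{h'})$ via Lemma~\ref{arason} and \ref{morita-rost-inv}, and additivity of $e_3$ finishes; the multiplier argument from \ref{sumrost} transports this back. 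Drop the bookkeeping around \ref{sumrost} and you have the paper's proof.
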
 
  
  \begin{proof} 
  As in the proof of (\ref{sumrost}), we reduce it  to the Arason invariant of    quadratic forms over $F_0$.
  Since the Arason invariant $I^3(F_0) \to H^3(F_0,  \Q/\Z(2))$ is a homomorphism, we get that
  $R(h \perp h') = R(h) + R(h')$.
  \end{proof}

 Let  $A$ be a central simple algebra over $L$ with 
 a $L/K$-involution $\tau$.   Let  $\tau'$ be a  $L/K$-involution on $A$.
 Then there exists   unit $u \in A$ with $\tau(u) =  u$ such that $\tau' = int(u) \tau$ (\cite[Ch 8, Theorem 7.4]{Sch}). 
 Let $h$ be a hermitian form over $(A,\tau)$. Then $uh$ is a hermitian form over $(A, \tau')$. 
 Further we have an isomorphism of groups $U(A, \tau, h)$ and $U(A, \tau', uh)$ which takes 
 $SU(A, \tau, h)$ to $SU(A, \tau', uh)$.  Let $h'$ be a hermitian form over $(A, \tau)$ of dimension equal to dim$(h)$.
 Then $uh'$ is a hermitian   form over $(A,  \tau')$  which   corresponds to the image of $h'$ 
 under the induced map $H^1(K, U(A,\tau, h)) \to H^1(K, U(A, \tau, uh))$.
 We end this section with the following.
 \begin{lemma}
 \label{two-inv}  If  $d_h(h') = 1$, then $ d_{uh}(uh') = 1$ and   $R_h(h') = R_{uh}(uh')$.  
 \end{lemma}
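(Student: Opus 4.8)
The plan is to follow the template of the proof of Lemma~\ref{morita-rost-inv}. The isomorphism $U(A,\tau,h)\xrightarrow{\sim}U(A,\tau',uh)$ is the one induced by the identity on $\mathrm{End}_A(V)$: an $A$-linear endomorphism preserves $h$ if and only if it preserves $uh$, and the reduced norm $\mathrm{Nrd}\colon\mathrm{End}_A(V)\to L^*$ is the same map in both situations. Hence this isomorphism carries $SU(A,\tau,h)$ onto $SU(A,\tau',uh)$ and fits into a commutative diagram
$$
\begin{array}{ccccccccc}
1 & \to & SU(A,\tau,h) & \to & U(A,\tau,h) & \to & R^1_{L/K}\G_m & \to & 1 \cr
& & \downarrow & & \downarrow & & \downarrow\id & & \cr
1 & \to & SU(A,\tau',uh) & \to & U(A,\tau',uh) & \to & R^1_{L/K}\G_m & \to & 1
\end{array}
$$
with all vertical maps isomorphisms. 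Passing to cohomology gives a commutative ladder between the two long exact sequences $(\star)$, again with vertical isomorphisms and with the identity on $H^1(K,R^1_{L/K}\G_m)=K^*/N_{L/K}(L^*)$.

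Next I would invoke the identification recalled just before the lemma: the class of $h'$ in $H^1(K,U(A,\tau,h))$ is carried by the middle vertical isomorphism to the class of $uh'$ in $H^1(K,U(A,\tau',uh))$. Since $d_h$ and $d_{uh}$ are exactly the maps $H^1(K,U(-))\to K^*/N_{L/K}(L^*)$ occurring in the ladder and the pertinent vertical arrow is the identity, commutativity yields $d_{uh}(uh')=d_h(h')$. In particular $d_h(h')=1$ forces $d_{uh}(uh')=1$, so $R_{uh}(uh')$ is defined.

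For the Rost invariants, choose $\zeta\in H^1(K,SU(A,\tau,h))$ lifting $h'$; by commutativity of the ladder its image $\zeta'$ under the isomorphism $H^1(K,SU(A,\tau,h))\xrightarrow{\sim}H^1(K,SU(A,\tau',uh))$ lifts $uh'$. As this isomorphism is induced by an isomorphism of algebraic groups, the functoriality of the Rost invariant recalled in \S\ref{rost} applies with $n_\phi=1$, giving $R_{SU(A,\tau',uh)}(\zeta')=R_{SU(A,\tau,h)}(\zeta)$ in $H^3(K,\mu_{2n}^{\otimes 2})$. Finally $\cores_{L/K}(L^*\cdot A)$ is the same subgroup in both settings, so the two classes agree modulo it, i.e.\ $R_{uh}(uh')=R_h(h')$. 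The only point needing a genuine (and very short) verification is that the isomorphism $U(A,\tau,h)\cong U(A,\tau',uh)$ is compatible with the projections onto $R^1_{L/K}\G_m$ — equivalently, with the discriminant — which is immediate because both are restrictions of the same reduced-norm map; everything else is formal functoriality, so I do not expect any real obstacle here.
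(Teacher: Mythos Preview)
Your proof is correct and follows essentially the same approach as the paper: both set up the commutative diagram of short exact sequences with vertical isomorphisms and deduce the equalities $d_{uh}(uh')=d_h(h')$ and $R_{uh}(uh')=R_h(h')$ from it. You have supplied more detail (the identification of the unitary groups via $\mathrm{End}_A(V)$, the $n_\phi=1$ argument, the fact that the quotienting subgroup is unchanged), but the paper's proof is simply a terse version of exactly the same argument.
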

 \begin{proof}
 We have the commutative diagram 
 $$ 
  \begin{array}{ccccccccc}
  1 & \to &  SU(A, \tau, h) &  \to & U(A, \tau, h)  & \to  & R^1_{L/K}\G_m & \to &  1 \cr 
  & & \downarrow & & \downarrow & & \downarrow id \cr 
  1 & \to &  SU(A, \tau', uh) &  \to & U(A, \tau', uh)  & \to  & R^1_{L/K}\G_m & \to &  1. 
  \end{array}
   $$
 Since the vertical maps in the above diagram are isomorphisms, it follows that 
  $d_h(h') = d_{uh}(uh')$ and   $R_h(h') = R_{uh}(uh')$. 
 \end{proof}

 Suppose that  $2(ind(A))$ is coprime to char$(K)$ and deg$(A){\rm dim}(h)$ is even.  
 For $\lambda \in K^*$, we  now compute the Rost invariant of $<1,\lambda>h$.
 
Let $  K(x)$ and $ L(x)$ be the rational function fields in one variable. 
 Let $h_0$ be the hyperbolic form over $(A, \tau)$ of dimension $2dim(h)$. 
 Then $<1, x>h $ represents an element in $H^1(K(x), U(A, \tau, h_0))$. 
  Since  deg$(A){\rm dim}(h)$ is even,  $d(<1, x>h) = 1$. Hence there exists $\xi \in H^1(K(x), SU(A, \tau, h_0)$
 which maps to the class of $<1, x>h $ in $H^1(K(x), U(A, \tau, h_0))$. 
 Let $n = ind(A)$. Then we have the residue map   $\partial_x : H^3(K(x), \mu_{2n}^{\otimes 2}) \to H^2(K, \mu_{2n})$ 
 at  $(x)$. 
 
 \begin{lemma} 
 \label{residue-ri}   Let $K$, $L$, $A$, $\tau$, $h$  and $\xi$ be as above. 
 Then $$\partial_x(R_{SU(A, \sigma, h_0)}(\xi)) = \DD(A, \tau, h).$$
 \end{lemma}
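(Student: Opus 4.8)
The plan is to reduce, by Morita equivalence and a generic splitting, to the case of a split algebra — where the invariant in question becomes an Arason invariant whose residue can be computed by hand — and then to descend.

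First I would reduce to the case $A=D$ division. Writing $\tau$ as the adjoint involution of a hermitian form over $(D,\tau_0)$ and letting $\tilde h,\tilde h_0$ be the forms over $(D,\tau_0)$ corresponding to $h,h_0$, one has that $\tilde h_0$ is hyperbolic of dimension $2\dim(\tilde h)$, that $\mathrm{ind}(A)=\deg(D)$, and that $\DD(A,\tau,h)=\DD(D,\tau_0,\tilde h)$, while by Lemma \ref{morita-rost-inv} the canonical identification $U(A,\tau,h_0)=U(D,\tau_0,\tilde h_0)$ is compatible with the exact sequence $(\star)$; hence $\xi$ corresponds to a lift $\tilde\xi$ of $\langle 1,x\rangle\tilde h$ with $R_{SU(A,\tau,h_0)}(\xi)=R_{SU(D,\tau_0,\tilde h_0)}(\tilde\xi)$, so I may rename $(D,\tau_0,\tilde h)$ as $(A,\tau,h)$. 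I would also note that $\partial_x$ is well defined on $H^3(K(x),\mu_{2n}^{\otimes 2})/\cores_{L(x)/K(x)}(L(x)^*\cdot A)$ — because $\partial_x\cores_{L(x)/K(x)}(\mu\cdot A)=v_x(\mu)\,\cores_{L/K}[A]=0$, using that $A$ admits an involution of the second kind — so that it suffices to compute $\partial_x R_{h_0}(\langle 1,x\rangle h)$, and by Proposition \ref{sumrost} the latter equals $\partial_x R(h_0\perp-\langle 1,x\rangle h)$.

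Next I would handle the split case $A=L=K(\sqrt\theta)$. Then $\dim(h)=m$ is even and $h=\langle a_1,\dots,a_m\rangle$ with $a_i\in K^*$, the trace form $q_h=\langle 1,-\theta\rangle\otimes\langle a_1,\dots,a_m\rangle$ lies in $I^2(K)$, and its Clifford invariant is $e_2(q_h)=(\theta,\mathrm{disc}(h))=[\DD(L,\tau,h)]$ by \cite[Proposition 10.33]{KMRT}. By Lemma \ref{arason}, $R(h_0\perp-\langle 1,x\rangle h)$ is the Arason invariant $e_3$ of the trace form of $h_0\perp-\langle 1,x\rangle h$; this trace form is Witt-equivalent to $\langle 1,x\rangle\otimes q_h$ up to an overall sign that $e_3$ ignores, and since $q_h\in I^2(K)$ it lies in $I^3(K(x))$. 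As multiplication by the Pfister form $\langle 1,x\rangle=\langle\!\langle -x\rangle\!\rangle$ corresponds, in the graded Witt ring, to the cup product with $\{-x\}$, this Arason invariant equals $\{-x\}\cup e_2(q_h)=\{-1\}\cup e_2(q_h)+\{x\}\cup e_2(q_h)$; the first summand is unramified at $x$ and $\partial_x(\{x\}\cup e_2(q_h))=e_2(q_h)$, so $\partial_x R_{h_0}(\langle 1,x\rangle h)=e_2(q_h)=[\DD(L,\tau,h)]$, which is the assertion in the split case.

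Finally I would treat the general division case by descent from the generic splitting field $K(X)$, $X=R_{L/K}(\SB(A))$: over $K(X)$ the algebra $A$ becomes a matrix algebra over $L(X)$, so Morita over $L(X)/K(X)$ and the split case above give the identity $\partial_x R_{SU(A,\tau,h_0)}(\xi)=[\DD(A,\tau,h)]$ after restriction to $H^2(K(X),\mu_{2n})$ (residues commuting with the restriction $H^\bullet(K(x))\to H^\bullet(K(X)(x))$, whose residue-field extension is $K\subseteq K(X)$). The hard part is to descend this: by the first step $\beta:=\partial_x R_{SU(A,\tau,h_0)}(\xi)-[\DD(A,\tau,h)]$ is a genuine element of $H^2(K,\mu_{2n})$, not merely a class modulo $\cores_{L/K}(L^*\cdot A)$, and one must show that $\beta$, which dies over $K(X)$, vanishes. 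I expect this to follow by also restricting to $L$ — over which $SU(A,\tau,h_0)$ becomes an $\SL_1$ and $R$ is the explicit cup product $c\mapsto\{c\}\cup[A]$, pinning $\beta_L$ down by a valuation count and matching it with $[\DD(A,\tau,h)_L]$ — combined with $\cores_{L/K}[A]=0$ (equivalently, the second-kind involution on $A$), which constrains $\ker\big(\Br(K)\to\Br(K(X))\big)$ enough to force $\beta=0$. Making the Brauer-kernel control and the valuation bookkeeping precise is where I expect the real work to lie.
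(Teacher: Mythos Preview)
Your approach is essentially the paper's: pass to the generic splitting field $K(X)$ with $X=R_{L/K}(\SB(A))$, compute the Rost invariant there via Morita and the Arason invariant of the trace form, take the residue, and descend. The preliminary Morita reduction to division $A$ and the detour through Proposition~\ref{sumrost} are harmless extras; the paper goes directly to $K(X)$ and works with $R_{SU(A,\tau,h_0)}(\xi)$ itself.

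Where you differ is in your assessment of the descent. You flag the step ``$\beta$ dies over $K(X)$ $\Rightarrow$ $\beta=0$'' as the hard part and propose a roundabout route via restriction to $L$ and valuation bookkeeping. In fact this is a one-line citation: by Merkurjev--Tignol \cite[Corollary~2.12 and Corollary~2.7]{MT}, the kernel of $\Br(K)\to\Br(K(X))$ is generated by $\cores_{L/K}(A)$, and since $A$ carries an involution of the second kind, $\cores_{L/K}(A)=0$. Hence $\Br(K)\to\Br(K(X))$ is injective and $\beta=0$. No auxiliary restriction to $L$ is needed. So your plan is correct, but the step you anticipate being delicate is exactly the one that is already packaged in the literature.
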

 
 \begin{proof}       Let  $SB(A)$ be the Severi-Brauer variety of $A$ over $L$ and 
  $Y = R_{L/K}(SB(A))$. Let $F_0$ be the function field of $Y$ over $K$
   and $F = L \otimes_K  F_0$.  
  Then $A\otimes_{L}F$ is a split algebra (\cite[\S 4]{Scheiderer}).
  Let $\tilde{h}$ be the hermitian form  over $F/F_0$ associated to $h_{F_0}$ by Morita 
  equivalence and $q_{\tilde{h}}$ the trace form of $\tilde{h}$.  Then the Clifford algebra $C( q_{\tilde{h}})$ of 
 $q_{\tilde{h}}$ is the quaternion algebra $(\theta, disc(h))$ (\cite[p. 350]{Sch}), where $L = K(\sqrt{\theta})$. 
 Since  $x \in F_0$,  $<1, x> \tilde{h}$ is the hermitian form over $F(x)/F_0(x)$ associated to $<1, x>h \otimes F_0(x)$ 
 under Morita equivalence and its 
 trace form is    $<1, x>q_{\tilde{h}}$.
 
 Since  $R(<1, x>h)_{F_0(x) }   = R(<1, x>\tilde{h}) $ (\ref{morita-rost-inv}) 
  and $R(<1, x>\tilde{h}) = R(<1, x> q_{\tilde{h}}) =  e_3(<1, x>q_{\tilde{h}})$ (\ref{arason}), 
 we have  $R(<1, x>h)_{F_0(x) }   = e_3(<1, x>q_{\tilde{h}}) =  ( (-x) \cdot C(q_{\tilde{h}}) )_{ F_0(x)} =  ((-x) \cdot (\theta, \disc(h)))_{F_0(x)}$.
  Since $\DD(A, \tau, h) \otimes_{K} F_0 = 
   (\theta, disc(h)) \otimes_{K}F_0$ (\cite[Proposition 10.33]{KMRT}), we have 
   $R(<1,x>h)_ {F_0(x)}  =   ( (-x) \cdot \DD(A, \tau, h))_ {  F_0(x)}$. 
   Since $A \otimes_{L} F$ is split, we have 
   $$R_{SU(A, \sigma, h_0)}(\xi_{F_0(x)}) = R(<1, x>h_{F_0(x)}) = 
   ((-x) \cdot \DD(A, \tau, h))_{F_0(x)}.$$   
   By taking the residue at $(x)$, we see that 
   $$\partial_x(R_{SU(A, \sigma, h_0)}(\xi)))_{F_0} = \partial_x(
   R_{SU(A, \sigma, h_0)}(\xi_{F_0(x)})) = \partial_x(  ((-x) \cdot \DD(A, \tau, h))_{F_0(x)} = \DD(A, \tau, h)_{F_0}.$$
Since the kernel of  the map $Br(K) \to Br(F_0)$  is generated by cores$_{L/K}(A)$ 
(\cite[Corollary 2.12 and Corollary 2.7]{MT}) 
and    cores$_{L/K}(A) = 0$,    the  map $Br(K) \to Br(F_0)$ is injective (cf. \cite[p. 132]{KMRT}). 
Hence $\partial_x(R_{SU(A, \sigma, h_0)}(\xi)) = \DD(A, \tau, h).$
 \end{proof}

 \begin{prop} 
 \label{ri-x} Let $K$ be a  field  with char$(K) \neq 2$. Let $L/K$ be a quadratic field extension. 
   Let $A$ be a central  division    algebra over $L$ with a 
 $L/K$-involution $\tau$. Let $h$ be a hermitian form over $(A, \tau)$. 
 Suppose that 2$(ind(A))$ is coprime to char$(K)$ and  deg$(A){\rm dim}(h)$ is even.  Let $  K(x)$ and $ L(x)$ be the rational function fields in one variable. 
 Then $d(<1, x>h) = 1$ and 
  $$R(<1, x>h) = \DD(A, \tau, h)  \cdot   (-x)  \in H^3(K(x), \mu_{2n}^{\otimes 2} ) / cores_{L(x)/K(x)}(L(x)^* \cdot A).$$
 \end{prop}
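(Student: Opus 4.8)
Since $\deg(A)\dim(h)$ is even, one checks by a direct reduced-norm computation that $d(\langle 1,x\rangle h)=1$ (this was already recorded before Lemma~\ref{residue-ri}), so $R(\langle 1,x\rangle h)$ is defined. The plan is as follows. Fix a lift $\xi\in H^1(K(x),SU(A,\tau,h_0))$ of $\langle 1,x\rangle h$, where $h_0$ is the hyperbolic form of dimension $2\dim(h)$; then $R(\langle 1,x\rangle h)$ is the class of $R_{SU(A,\tau,h_0)}(\xi)$ in $Q:=H^3(K(x),\mu_{2n}^{\otimes 2})/\cores_{L(x)/K(x)}(L(x)^*\cdot A)$. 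Put $\beta=\DD(A,\tau,h)\cdot(-x)\in H^3(K(x),\mu_{2n}^{\otimes 2})$. I would show $R_{SU(A,\tau,h_0)}(\xi)\equiv\beta$ modulo $\cores_{L(x)/K(x)}(L(x)^*\cdot A)$ by a residue analysis on $\P^1_K$ followed by a specialization at $x=-1$.

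First, by Lemma~\ref{residue-ri}, $\partial_x(R_{SU(A,\tau,h_0)}(\xi))=\DD(A,\tau,h)$; writing $(-x)=(-1)+(x)$ with $(-1)$ a unit at $(x)$ gives $\partial_x(\beta)=\DD(A,\tau,h)$ as well, so $R_{SU(A,\tau,h_0)}(\xi)-\beta$ is unramified at $(x)$. At a closed point $P\neq(x)$ of $\A^1_K$ the element $x$ is a unit of $\OO_P$, hence $\langle 1,x\rangle h$ extends to a unimodular hermitian $\OO_P$-form whose relative discriminant is a square of a unit, and this $\OO_P$-form lifts to a torsor under $SU(A,\tau,h_0)$ over $\OO_P$; the Rost invariant of such a torsor is unramified at $P$. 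Since changing the lift of $\langle 1,x\rangle h$ alters $R_{SU(A,\tau,h_0)}$ only by an element of $\cores_{L(x)/K(x)}(L(x)^*\cdot A)$ (as recalled in \S\ref{rost}), it follows that $\partial_P(R_{SU(A,\tau,h_0)}(\xi)-\beta)$ lies in $\cores_{(\kappa(P)\tensor_K L)/\kappa(P)}(\Z\cdot A)\subseteq H^2(\kappa(P),\mu_{2n})$ at every closed point $P$ of $\A^1_K$.

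Because $L[x]/K[x]$ is étale and $L[x]$ is a principal ideal domain, one can then choose $g\in L(x)^*$, a product of polynomials pulled back from $K[x]$, so that $\cores_{L(x)/K(x)}(g\cdot A)$ has the same residue as $R_{SU(A,\tau,h_0)}(\xi)-\beta$ at every closed point of $\A^1_K$. Hence $\delta:=R_{SU(A,\tau,h_0)}(\xi)-\beta-\cores_{L(x)/K(x)}(g\cdot A)$ is unramified at all closed points of $\A^1_K$, so $\delta=c_{K(x)}$ for a unique $c\in H^3(K,\mu_{2n}^{\otimes 2})$ by the Faddeev exact sequence. Finally, to compute $c$ I would specialize at $x=-1$: there $-x$ becomes the square $1$, so $\beta$ and $c_{K(x)}$ are unramified at $(x+1)$ and specialize to $\DD(A,\tau,h)\cdot(1)=0$ and to $c$ respectively; the form $\langle 1,-1\rangle h\isom h\perp -h$ is hyperbolic, so its Rost invariant is trivial in $H^3(K,\mu_{2n}^{\otimes 2})/\cores_{L/K}(L^*\cdot A)$, and—using $\cores_{L/K}(A)=0$ together with the projection formula to replace the correcting function by one that is a unit at $(x+1)$—the specialization of $R_{SU(A,\tau,h_0)}(\xi)$ lies in $\cores_{L/K}(L^*\cdot A)$. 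Hence $c\in\cores_{L/K}(L^*\cdot A)$, so $c_{K(x)}\in\cores_{L(x)/K(x)}(L(x)^*\cdot A)$, and therefore $R(\langle 1,x\rangle h)=\beta=\DD(A,\tau,h)\cdot(-x)$ in $Q$.

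The main obstacle will be the residue bookkeeping in the two middle paragraphs: one must control the residues of the Rost invariant of a hermitian form at \emph{every} closed point of $\A^1_K$, and simultaneously keep track of the subgroup $\cores_{L(x)/K(x)}(L(x)^*\cdot A)$ modulo which $R$ is only defined, so that the Faddeev comparison and the specialization at $x=-1$ go through cleanly. The two inputs that make this possible are the unramifiedness of the Rost invariant of a torsor under a reductive group scheme over $\OO_P$, and the vanishing $\cores_{L/K}(A)=0$, which holds since $A$ carries an $L/K$-involution.
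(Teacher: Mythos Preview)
Your strategy—residue analysis on $\A^1_K$ followed by specialization at $x=-1$—is exactly the paper's. The execution differs: rather than fixing a generic lift $\xi$ over $K(x)$ and correcting its residues pointwise, the paper lifts $\langle 1,x\rangle h$ once and for all to a class $\tilde{\xi}\in H^1_{\acute{e}t}(S,SU(A,\tau,h_0))$ over the ring $S=K[x,1/x]$, using the \'etale exact sequence for $SU\to U\to R^1_{L/K}\G_m$ over $S$ together with the identification $H^1_{\acute{e}t}(S,R^1_{L/K}\G_m)=S^*/N(T^*)$ (where $T=L[x,1/x]$). By Gille's theorem the Rost invariant of such an integral torsor is unramified at every closed point of $\Spec S$, so $R_{SU}(\xi)-\DD(A,\tau,h)\cdot(-x)$ is already unramified on all of $\A^1_K$ with no correcting function needed, and the specialization $x\mapsto -1$ is literally the ring homomorphism $S\to K$ applied to $\tilde{\xi}$.

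Your route via local lifts and a correcting $g\in L(x)^*$ can be made to work, but one claim is wrong as written: you say $g$ may be taken to be ``a product of polynomials pulled back from $K[x]$.'' If $g\in K(x)^*\subset L(x)^*$, the projection formula gives $\cores_{L(x)/K(x)}((g)\cdot A)=(g)\cdot\cores_{L/K}(A)_{K(x)}=0$, so such a $g$ corrects nothing. You genuinely need $g\in L[x]$ with prescribed valuations at the finitely many primes of $L[x]$ lying over the bad points; this is possible since $L[x]$ is a PID, but the residue bookkeeping (matching $\partial_P$ against $\cores_{(\kappa(P)\otimes_K L)/\kappa(P)}$ of the restriction of $A$) and the subsequent specialization (where you must also compare your generic $\xi$ with an integral lift over $\OO_{(x+1)}$) are precisely what the paper's single global lift over $S$ lets one avoid.
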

 
 \begin{proof}  Since  $x \in K(x)^*$ and deg$(A){\rm dim}(h)$ is even, $d(<1, x>h) =   1$.
  Hence the Rost invariant of $<1, x>h$ is defined. 
  
 Let $S = K[x, \frac{1}{x}] \subset K(x)$ and $T = L[x, \frac{1}{x}] \subset L(x)$. 
 The  exact sequence of algebraic group schemes 
 $$ 1 \to SU(A,  \tau,  h_0) \to U(A,  \tau,  h_0) \to R^1_{L/K}\G_m \to 1 $$
  induces a  long exact sequence of cohomology sets    
 $$ R^1_{L/K}\G_m(S)   \buildrel{\delta}\over{\to} H^1_{\acute et}(S, SU(A,  \tau,  h_0)) 
 \to H^1_{\acute et}(S, U( A,  \tau,  h_0))
  \buildrel{d_{ h_0}}\over{\to} H^1_{\acute et}(S, R^1_{L/K}\G_m) .  $$
 Since $S$ and $T$ are principal ideal domains,  we have 
 $R^1_{L/K}\G_m(S) = \{ a \tau(a)^{-1} \mid a \in T^* \}$ (cf. \cite[Proposition 2.3]{Suresh2024}) and 
 $H^1_{\acute et}(S, R^1_{L/K}\G_m) = S^*/N_{L/K}(T^*)$.

 Since  $x \in S^*$,
 $<1, x>h$ represents an element  $\zeta$ in $H^1_{\acute et}(S, U(A,  \tau,  h_0))$.
 Since   $d_{h_0}(<1, x>h) =   1$, there exists $\tilde{\xi} \in H^1_{\acute et}(S, SU(A,  \tau, h_0))$ which maps to $\zeta$. 
 Let $\xi \in H^1(K(x), SU(A, \tau, h_0)$ be the image of $\tilde{\xi}$.
 
 Let $n = ind(A)$ and    $\beta = R_{SU(A, \tau, h_0)}(\xi) \in H^3(K(x), \mu_{2n}^{\otimes 2})$. 
 Since $\xi$ is in the image of $H^1_{\acute et}(S, SU(A,  \tau, h_0))$, 
 $\beta$ is unramified at every maximal ideal of $S$ (\cite[Theorem 2]{Gille-rost-postive}).
  By (\ref{residue-ri}), we have  $ \partial_x(\beta)  = \DD(A, \tau, h)$. 
 Since $\DD(A, \tau, h)$ is defined over $K$,  $\DD(A, \tau, h) \cdot (-x)$ is unramified at every maximal ideal
  of $S$. In particular   $ \beta -   \DD(A, \tau, h) \cdot (-x)$  is unramified at every maximal ideal of $K[x]$. 
 Hence  there exists  $\beta_0 \in H^3(K, \mu_{2n}^{\otimes 2})$ with $\beta = \beta_{0K(x)} $ (cf.,  \cite[4.1]{CTSB}). 
 
  The homomorphism $S \to K$ given by $x = -1$ induces   the following commutative diagram  with exact rows
$$
  \begin{array}{ccccccc}
R^1_{L/K}\G_m(S)  &  \buildrel{\delta}\over{\to} &  H^1_{\acute et}(R, SU(A, \tau,  h_0)) & 
 \to &  H^1_{\acute et}(R, U( A, \tau,  h_0)) &   \buildrel{d_{ h_0}}\over{\to} &   H^1_{\acute et}(S, R^1_{L/K}\G_m)  \\ 
 \downarrow & & \downarrow & & \downarrow & & \downarrow  \\
R^1_{L/K}\G_m(K)  &  \buildrel{\delta}\over{\to} &  H^1(K, SU(A,  \tau,  h_0)) & 
 \to &  H^1(K, U(A,  \tau,  h_0)) &   \buildrel{d_{h_0}}\over{\to} &   H^1(K, R^1_{L/K}\G_m) 
  \end{array}
  $$
 
 Since $\zeta \in H^1_{\acute et}(R, U( A, \tau,  h_0))$ represents the hermitian form $<1, x>h$,
 $h$ is defined over $(A, \tau)$ and 
 the map $R \to K$ send $x$ to $-1$, the  image $\bar{\zeta}$  of $\zeta$ in $H^1(K, U(A, \tau, h_0)$ 
 represents $<1, -1>h$ and hence trivial.  
 Let $\bar{\xi}$ be the image of $\tilde{\xi}$ in $H^1(K, SU(A, \tau, h_0))$. Since 
 $\bar{\xi}$ maps to $\bar{\zeta}$, there exists $a \in L^*$ such that $\delta(a \tau(a)^{-1}) = \bar{\xi}$. 
Thus $R_{SU(A, \tau, h_0)}(\bar{\xi})) = cores_{L/K}( (a) \cdot A) $ (\cite[Appendix]{PP}). 

Let $\bar{\beta} \in H^3(K,\mu_{2n}^{\otimes 2})$ be the image of $\beta$ under the specialization map $x \to -1$.
By (\cite[Theorem 2]{Gille-rost-postive}), we have  $ \bar{\beta} = R_{SU(A, \tau, h_0)}(\bar{\xi}))$. Hence $\bar{\beta} = cores_{L/K} ( (a) \cdot A)$.  
Since $\bar{\beta} = \bar{\beta}_ {0K(x)} = \beta_0$, we have
 $\beta = \beta_{0K(x)} + \DD(A, \tau, h) \cdot (-x) = cores_{L/K} ( (a) \cdot A) _{K(x)} +  \DD(A, \tau, h) \cdot (-x)$.
Since  $\xi$ is the lift of $<1, x>h$, we have 
$$R(<1, x>h) =  \DD(A, \tau, h) \cdot (-x)  \in  H^3(K(x), \mu_{2n}^{\otimes 2} ) / cores_{L(x)/K(x)}(L(x)^* \cdot A).$$
 \end{proof}
 
\begin{theorem} 
\label{ri-lambda} Let $K$ be a  field  with char$(K) \neq 2$. Let $L/K$ be a quadratic field extension. 
   Let $A$ be a central  division    algebra over $L$ with a 
 $L/K$-involution $\tau$. Let $h$ be a hermitian form over $(A, \tau)$. 
 Suppose that 2$(ind(A))$ is coprime to char$(K)$ and  deg$(A){\rm dim}(h)$ is even.   Then for $\lambda \in K^*$,   $d(<1, \lambda>h) = 1$ and 
  $$R(<1, \lambda>h) = \DD(A, \tau, h)  \cdot  (-\lambda)  \in H^3(K, \mu_{2n}^{\otimes 2} ) / cores_{L/K}(L^* \cdot A).$$
\end{theorem}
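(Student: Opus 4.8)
The plan is to deduce Theorem~\ref{ri-lambda} from Proposition~\ref{ri-x} by specializing the variable $x$ to the value $\lambda$. Let $h_0$ be the hyperbolic form over $(A,\tau)$ of dimension $2\mathrm{dim}(h)$ and put $S=K[x,\tfrac{1}{x}]\subset K(x)$, $T=L[x,\tfrac{1}{x}]\subset L(x)$, as in the proof of Proposition~\ref{ri-x}. First I would record that $d(<1,\lambda>h)=1$: since $\lambda\in K^*$ and $\mathrm{deg}(A)\mathrm{dim}(h)$ is even, this holds by the same argument that gives $d(<1,x>h)=1$ in Proposition~\ref{ri-x}. Consequently $R(<1,\lambda>h)=R_{h_0}(<1,\lambda>h)$ is defined.

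Next I would re-use the data constructed in the proof of Proposition~\ref{ri-x}: a lift $\tilde\xi\in H^1_{\et}(S,SU(A,\tau,h_0))$ of the class $\zeta\in H^1_{\et}(S,U(A,\tau,h_0))$ represented by $<1,x>h$, its image $\xi\in H^1(K(x),SU(A,\tau,h_0))$, and an element $a\in L^*$, for which that proof establishes the equality
$$\beta:=R_{SU(A,\tau,h_0)}(\xi)=\cores_{L/K}\big((a)\cdot A\big)_{K(x)}+\DD(A,\tau,h)\cdot(-x)\quad\text{in }H^3\big(K(x),\mu_{2n}^{\otimes 2}\big).$$
Since $\lambda\in K^*$, the $K$-algebra homomorphism $S\to K$ sending $x$ to $\lambda$ is defined, and it induces a commutative diagram with exact rows exactly as in the proof of Proposition~\ref{ri-x}, but with $-1$ replaced by $\lambda$. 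The image of $\zeta$ in $H^1(K,U(A,\tau,h_0))$ represents $<1,\lambda>h$, so the image $\bar\xi$ of $\tilde\xi$ in $H^1(K,SU(A,\tau,h_0))$ is a lift of $<1,\lambda>h$, and hence by definition $R_{h_0}(<1,\lambda>h)$ is the class of $R_{SU(A,\tau,h_0)}(\bar\xi)$ in $H^3(K,\mu_{2n}^{\otimes 2})/\cores_{L/K}(L^*\cdot A)$.

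It remains to identify $R_{SU(A,\tau,h_0)}(\bar\xi)$. As $\tilde\xi$ is defined over $S$ and $\lambda\neq 0$, (\cite[Theorem~2]{Gille-rost-postive}) identifies $R_{SU(A,\tau,h_0)}(\bar\xi)$ with the specialization of $\beta$ at $x=\lambda$; since $\cores_{L/K}((a)\cdot A)_{K(x)}$ and $\DD(A,\tau,h)$ are extended from $K$ and $-x$ is a unit in the local ring of $K[x,\tfrac{1}{x}]$ at $(x-\lambda)$, that specialization is $\cores_{L/K}((a)\cdot A)+\DD(A,\tau,h)\cdot(-\lambda)$. Reducing modulo $\cores_{L/K}(L^*\cdot A)$ yields
$$R(<1,\lambda>h)=\DD(A,\tau,h)\cdot(-\lambda)\in H^3\big(K,\mu_{2n}^{\otimes 2}\big)/\cores_{L/K}(L^*\cdot A),$$
as desired.

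Concerning difficulty: essentially the entire content sits in Proposition~\ref{ri-x} --- the unramifiedness of the Rost invariant for classes extended from $S$ together with the comparison of the generic invariant with $\DD(A,\tau,h)\cdot(-x)$ --- and Theorem~\ref{ri-lambda} is merely its evaluation at the $K$-point $x=\lambda$. The one point to watch is the hypothesis $\lambda\in K^*$ (i.e.\ $\lambda\neq 0$): this is exactly what makes $x\mapsto\lambda$ factor through $S=K[x,\tfrac{1}{x}]$, over which $<1,x>h$ is defined as a cohomology class, and what makes $(x-\lambda)$ a prime at which $\beta$ is unramified, so that the specialization statement applies.
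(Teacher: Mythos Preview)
Your proof is correct and follows essentially the same approach as the paper: both extract from the proof of Proposition~\ref{ri-x} the explicit formula $R_{SU(A,\tau,h_0)}(\xi)=\cores_{L/K}((a)\cdot A)_{K(x)}+\DD(A,\tau,h)\cdot(-x)$ and then specialize at $x=\lambda$, invoking \cite[Theorem~2]{Gille-rost-postive} for compatibility of the Rost invariant with specialization. Your write-up is in fact slightly more careful in making explicit that $\lambda\in K^*$ is needed so that $x\mapsto\lambda$ factors through $S=K[x,\tfrac{1}{x}]$, where the integral lift $\tilde\xi$ lives.
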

  
  \begin{proof} Let $\lambda \in K^*$. Since deg$(A){\rm dim}(h)$ is even,     $d(<1, \lambda>h) = 1$.  

Let $h_0$ be the hyperbolic form over $(A, \tau)$ of dimension $2dim(h)$. Then $<1, x>h$ represents an element in $H^1(K(x), U(A, \tau, h_0))$
and it lifts to an element   $\xi \in H^3(K(x), SU(A, \tau, h_0))$. Then, from the proof of (\ref{ri-x}), 
we have $R_{SU(A, \tau, h_0)}(\xi) = cores_{L/K}((a) \cdot A) +  \DD(A, \tau, h) \cdot (-x) $ for some $a \in L^*$.

By specializing at $x = \lambda$, we see that, $R_{SU(A,\tau, h_0)}(\xi)$ maps to 
$cores_{L/K}((a) \cdot A) +  \DD(A, \tau, h) \cdot (-\lambda). $ Since the Rost invariant commutes with the specialization 
(\cite[Theorem 2]{Gille-rost-postive}), 
we have $R_{SU(A, \tau, h_0)}(\bar{\xi}) = cores_{L/K}((a) \cdot A) +  \DD(A, \tau, h) \cdot (-\lambda). $
Since $\bar{\xi}$ lifts the class of the hermitian form $<1, \lambda>h$, we have 
$$R(<1, \lambda>h) = \DD(A, \tau, h)  \cdot  (-\lambda)  \in H^3(K, \mu_{2n}^{\otimes 2} ) / cores_{L/K}(L^* \cdot A).$$

  \end{proof}

\section{ Some  Classification  results for  hermitian forms}
\label{class-gen}

\begin{prop}
\label{dim3} Suppose that $L= K(\sqrt{\theta})$  is a quadratic extension and  $D_0$   a quaternion division algebra 
over $K$.  Let  $\star$  be   the canonical   involution on $D_0$, $\iota$  the nontrivial automorphism of $L/K$,
$D = D_0 \otimes_K L$  and 
$\tau = \star \otimes \iota$. 
Let $h_1$ be a hermitian form over $(D_0, \star)$ and $h_2$ a skew-hermitian form over $(D_0, \star)$. 
 Then   $h = h_1 \perp \sqrt{\theta} h_2$  is a hermitian form over $(D, \tau)$. 
Suppose that dim$(h)$ is even, disc$(h)$ is trivial and $R(h)$ is trivial. 
 If  dim$(h_2) \leq 3$, then $<1, -\theta>h_2$ is hyperbolic as skew-hermitian form over $(D_0, \star)$.  
 \end{prop}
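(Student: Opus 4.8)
\emph{Plan.} The idea is to pass to a splitting field of $D_0$, where the whole situation collapses to a statement about quadratic forms governed by Lemma~\ref{arason}, apply the Arason--Pfister Hauptsatz to the (necessarily low-dimensional) quadratic trace form, and then descend.

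First I would treat the case where $L$ splits $D_0$, since there no descent is needed: then $D\cong M_2(L)$ and, by Morita equivalence, $h$ corresponds to a skew-hermitian form over $(L,\iota)$. Since the $\star$-symmetric elements of $D_0$ lie in $K$, the form $h_1$ is diagonal with entries in $K^*$, and each such one-dimensional hermitian form over $(D_0,\star)$ becomes isotropic — hence hyperbolic — over $M_2(L)$; thus $(h_1)_L$ is hyperbolic and contributes nothing to $R$. Writing the skew-hermitian form attached to $\sqrt{\theta} h_2$ as $\sqrt{\theta} q_2$, where $q_2$ is the quadratic ``Jacobson'' form of $h_2$, viewed as a hermitian form over $L/K$ with entries in $K^*$ and of dimension $2\dim(h_2)\le 6$, and using that scaling all values of a hermitian form over $(L,\iota)$ by the central unit $\sqrt{\theta}$ leaves the unitary group unchanged, Lemma~\ref{arason} identifies $R(h)$ with the Arason invariant $e_3$ of the quadratic trace form $\langle 1,-\theta\rangle\otimes q_2$, of dimension $4\dim(h_2)\le 12$. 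Triviality of $\mathrm{disc}(h)$ puts this form in $I^3(K)$, and $R(h)=0$ then puts it in $I^4(K)$; since $12<16$, the Arason--Pfister Hauptsatz forces $\langle 1,-\theta\rangle\otimes q_2$ to be hyperbolic, hence $\langle 1,-\theta\rangle h_2$ is hyperbolic over $(D_0,\star)$.

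In general, let $F=K(C)$ with $C$ the conic $\mathrm{SB}(D_0)$. Then $D_0\otimes_K F$ splits, $L_F=L\otimes_K F$ is a field, and $D\otimes_K F\cong M_2(L_F)$; moreover $R(h)_F=R(h_F)$ is an honest class in $H^3(F)$, with no $\mathrm{cores}$-ambiguity since $D$ splits over $F$, while $\mathrm{disc}(h_F)$ is trivial and $\dim(h_F)$ even. Running the previous paragraph over $F$ (invoking Lemmas~\ref{morita-rost-inv} and \ref{rost-iso} for the Morita reduction and additivity of $R$, and again that $(h_1)_{L_F}$ is hyperbolic) shows that $\langle 1,-\theta\rangle\otimes q_2$ becomes hyperbolic over $F$, i.e.\ that $\langle 1,-\theta\rangle h_2$ becomes hyperbolic over $(D_0\otimes_K F,\star)$. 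By Jacobson's theorem, hyperbolicity of $\langle 1,-\theta\rangle h_2$ over $(D_0,\star)$ is equivalent to the vanishing of $\langle 1,-\theta\rangle\otimes q_2$ in $W(K)$; since hyperbolicity of skew-hermitian forms over $(D_0,\star)$ descends from $K(\mathrm{SB}(D_0))$ to $K$, we conclude $\langle 1,-\theta\rangle\otimes q_2=0$ in $W(K)$, which is the claim.

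The parts I expect to need care are the Morita and trace-form bookkeeping over the splitting field that produces the identity $R(h)_F=e_3(\langle 1,-\theta\rangle\otimes q_2)$ — in particular the vanishing of the $h_1$-contribution and the disappearance of the $\mathrm{cores}$-ambiguity — and the descent of hyperbolicity of skew-hermitian forms over $(D_0,\star)$ from the function field of its conic. It is precisely the hypothesis $\dim(h_2)\le 3$ that makes the Arason--Pfister step go through: the quadratic trace form then has dimension $\le 12<16$, so triviality of its invariants in degrees $\le 3$ forces hyperbolicity, whereas for larger $h_2$ this would fail.
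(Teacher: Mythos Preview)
Your approach is correct and essentially identical to the paper's: pass to the function field $K(C)$ of the conic of $D_0$, observe that $h_1$ becomes hyperbolic there (Morita turns it into a skew-symmetric form), reduce via Morita and Lemma~\ref{arason} to a quadratic trace form of dimension $\le 12$ lying in $I^4(K(C))$, apply Arason--Pfister, and then descend using the injectivity of $W^{-1}(D_0,\star)\to W^{-1}(D_0\otimes K(C),\star)$ (which the paper cites as \cite[Proposition~3.3]{PSS}). One notational caution: your ``Jacobson form'' $q_2$ is only defined over the splitting field, not over $K$, so the closing clause ``$\langle 1,-\theta\rangle\otimes q_2=0$ in $W(K)$'' is not well-posed---but the conclusion you actually need, hyperbolicity of $\langle 1,-\theta\rangle h_2$ over $(D_0,\star)$, already follows from the descent step you stated just before.
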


\begin{proof}   
 Let $C$ be the conic associated to $D_0$ and  $K(C)$ be the function field of $C$ over $K$. 
Since $D_0 \otimes K(C)$ is split (\cite[Proposition 7]{roquette}), $h_1 \otimes K(C)$ 
 corresponds to a skew-symmetric form over $K(C)$ and hence 
 hyperbolic. 
In particular  dim$(\sqrt{\theta} h_2 \otimes K(C))$ is even, disc$(\sqrt{\theta} h_2 \otimes K(C) )$ is trivial 
and $R(\sqrt{\theta} h_2 \otimes K(C))$ is trivial.

Since $D_0 \otimes K(C) \simeq M_2(K(C))$, by Morita equivalence,  
the skew-hermitian form $h_2  \otimes K(C)$ over $(D_0\otimes K(C), \star \otimes 1)$
corresponds a quadratic form $<a_1, \cdots , a_{2n}>$ over $K(C)$. Further 
the hermitian form $ (\sqrt{\theta} h_2) \otimes K(C) $ over $(D\otimes K(C), \tau \otimes 1)$
corresponds to the hermitian form $\tilde{h}_2 = <a_1, \cdots ,  a_{2n}>$  over $L(C)/K(C)$.   
 Since Morita equivalence preserves the discriminant and Rost invariant, 
disc$(\tilde{h}_2)$ is trivial and $R(\tilde{h}_2)$ is trivial.  
Let $q_{\tilde{h}_2}  = <1, -\theta><a_1, \cdots , a_{2n}>$ 
be the trace form of $\tilde{h}_2$.   Then dim$(q_{\tilde{h}_2})$ is even and disc$(q_{\tilde{h}_2})$ is trivial.
Further  $C((q_{\tilde{h}_2}))$ is trivial  (\cite[p. 350]{Sch})). 
The Rost invariant of $\tilde{h}_2$ is the Arason invariant of $q_{\tilde{h}_2}$  (cf. \ref{arason}) which is trivial. 
Hence $q_{\tilde{h}_2} \in I^4(K(C))$ (\cite{ovv}). 

Suppose dim$(h_2) \leq 3$. Then dim$(\tilde{h}_2) \leq 6$ and hence  dim$(q_{\tilde{h}_2}) \leq 12$.
Since, by a result of Arason and Pfister (cf. \cite[Ch. 7, Remark 7.4]{Sch}),  
every  anisotropic quadratic form representing an element  $I^4(K(C))$ has dimension at least 16, 
 $q_{\tilde{h_2}} = <1, -\theta><a_1, \cdots , a_{2n}>$ is hyperbolic.   
 
 Since  the quadratic form $<a_1, \cdots , a_{2n}>$ over $K(C)$  corresponds to the skew hermitian form 
 $h_2$ over $(D_0 \otimes K(C), \star)$ under the Morita equivalence, the quadratic form $q_{\tilde{h}_2}  = <1, -\theta><a_1, \cdots , a_{2n}>$  
 corresponds to  the skew hermitian form $<1, -\theta>h_2$  over $(D_0 \otimes K(C), \star)$ under the Morita equivalence.
 Since $q_{\tilde{h_2}}$ is hyperbolic, 
  $<1, -\theta>h_2$ is in the kernel of the homomorphism $W^{-1}(D_0, \star) \to W^{-1}(D_0\otimes K(C), \star)$ which is   injective (\cite[Proposition 3.3]{PSS}).
  Thus 
  $<1, -\theta>h_2$ 
is hyperbolic as a skew-hermitian form over $(D_0, \star)$.  
\end{proof}

 We  recall   the following well known fact (cf. \cite[Proof of Theorem 4.3]{preeti}). 

\begin{prop} 
\label{odd}  Let $F_0$ be a field with char$(F_0) \neq 2$  and    cd$_2(F_0) \leq 3$.
Let $F/F_0$ be a quadratic extension  and $D/F$ a central division  algebra with a $F/F_0$-involution $\tau$.  
Suppose that deg$(D)$ is odd. 
  Let $h$   be a  hermitian form  over $(D, \tau)$. If dim$(h)$ is even, disc$(h)$ is trivial and 
 Rost invariant $R(h)$  of $h$ is trivial, then $h$ is hyperbolic. 
\end{prop}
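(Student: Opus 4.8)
The plan is to reduce to the classification of quadratic forms over a field of cohomological $2$-dimension at most $3$, along the lines of \cite[Proof of Theorem 4.3]{preeti}: split $D$ by an \emph{odd-degree} extension of $F_0$, transfer $h$ to a quadratic form over that extension, show it is hyperbolic, and descend.

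First I would produce a finite field extension $E_0/F_0$ of odd degree for which $E := E_0\otimes_{F_0}F$ is a field splitting $D$. Since $\tau$ is of the second kind, $D$ contains a $\tau$-stable maximal subfield $E$ on which $\tau$ acts nontrivially --- for instance $E=F(d)$ for a generic symmetric element $d$ (cf. \cite{KMRT}). Setting $E_0=\{\,x\in E:\tau(x)=x\,\}$, one has $E_0\cap F=F_0$, $[E:E_0]=2$, and $[E_0:F_0]=[E:F]=\deg(D)$, which is odd; moreover $E=E_0F$ splits $D$, being a maximal subfield. Since $E_0/F_0$ is finite, cd$_2(E_0)\leq$ cd$_2(F_0)\leq 3$.

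Now base change to $E_0$. As $D\otimes_F E\cong M_n(E)$ with $n=\deg(D)$, Morita equivalence carries $h_{E_0}$ to a hermitian form $\tilde h$ over $E/E_0$; by \ref{morita-rost-inv}, together with functoriality of the discriminant and of the Rost invariant, dim$(\tilde h)$ is even, disc$(\tilde h)$ is trivial and $R(\tilde h)$ is trivial. Write $E=E_0(\sqrt\theta)$ and $\tilde h=<a_1,\cdots,a_r>$ where $r$ (which equals dim$(\tilde h)$) is even, so that the trace form $q_{\tilde h}=<1,-\theta><a_1,\cdots,a_r>$ lies in $I^2(E_0)$. Its Clifford invariant is the class of the quaternion algebra $(\theta,\text{disc}(\tilde h))$, hence trivial, so $q_{\tilde h}\in I^3(E_0)$; its Arason invariant equals $R(\tilde h)=0$ by \ref{arason}, so $q_{\tilde h}\in I^4(E_0)$ by \cite{ovv}. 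Since cd$_2(E_0)\leq 3$ forces $I^4(E_0)=0$, the form $q_{\tilde h}$ is hyperbolic; hence $\tilde h$ is hyperbolic (\cite[Theorem 1.1]{Sch}), and so is $h_{E_0}$. Finally, as $[E_0:F_0]$ is odd, a Springer-type theorem for hermitian forms over algebras with involution (cf. \cite{BP}) shows that $h$ itself is hyperbolic.

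The main obstacle is the first step: one needs an odd-degree extension of $F_0$ --- not merely of $F$ --- that splits $D$ while preserving the quadratic pair, which is exactly why a $\tau$-stable maximal subfield must be used; the concluding descent rests on the hermitian analogue of Springer's theorem. Everything between is the trace-form bookkeeping already carried out in \S\ref{rost-comp}.
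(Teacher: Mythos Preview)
Your proposal is correct and follows essentially the same route as the paper: pass to an odd-degree extension $E_0/F_0$ splitting $D$ (the paper simply cites \cite[Lemma~3.3.1]{BP} for this, which is exactly your $\tau$-stable maximal subfield construction), apply Morita equivalence to reduce to a hermitian form over $E/E_0$, show the trace form lies in $I^4(E_0)=0$ (the paper cites \cite{AEJ} here), and descend via the odd-degree result of Bayer-Fluckiger--Lenstra \cite{BL} (which is the precise reference for your ``Springer-type theorem'').
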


\begin{proof} Since deg$(D)$ is odd, there exists an extension $L_0/F_0$ of odd degree such that 
$D\otimes_{F_0} L_0$ is split (cf. \cite[Lemma 3.3.1]{BP}). Let $L = F \otimes L_0$.  By Morita equivalence, 
$h$ corresponds to a hermitian form $\tilde{h}$ over $L/L_0$. 
Let $q_{\tilde{h}}$ be the trace form of $h$.  Then, as in the proof of (\ref{dim3}), 
$q_{\tilde{h}} \in I^4(L_0)$.  Since cd$_2(F_0) \leq 3$, cd$_2(L_0) \leq 3$ (\cite[p. 83, Proposition 10]{SerreGC}).
Hence $I^4(L_0) = 0$ (\cite[Corollary]{AEJ}). In particular $q_{\tilde{h}}$ is hyperbolic over $L_0$
 and hence $h$ is hyperbolic over $L_0$. Since  $[L_0 : F_0]$ is odd, by (\cite{BL}), $h$ is hyperbolic.  
\end{proof}

 \section{Global fields and local fields}
 \label{global-local}
 
 For a non archimedean local field $K$, we have a canonical  isomorphism $inv: Br(K) \to \Q/\Z$
 and $inv : Br(\R) \to \Z/2\Z \subset \Q/\Z$ (\cite[p. 130]{CFANT}).
 For a global field  $K$, let $\Omega_K$ is the set of places of $K$. 
 For $\nu \in \Omega_K$, let $K_\nu$ denote the completion of $L$ at $\nu$ and 
 $inv_\nu : Br(K_\nu ) \to \Q/\Z$ be  the canonical map. 
For $A \in Br(K)$ and $\nu \in \Omega_K$, let $inv_\nu(A) = inv_\nu(A\otimes K_\nu)$.
Then $A \in Br(K)$, $\sum_\nu inv_\nu(A) = 0$ for all $A \in Br(K)$and
$A$ is a split algebra if and only if $inv_v(A) =0$ for all $v \in \Omega_K$  ( (\cite[p. 188]{CFANT}) . 
Further  given $(A_\nu) \in \oplus Br(K_\nu)$ with $\sum_\nu inv_\nu(A_\nu)  = 0$,   there exists 
 $A \in Br(K)$ with $inv_\nu(A) = inv_\nu(A_\nu)$  (\cite[p. 196 ]{CFANT}).

 \begin{prop}
 \label{dalgebra} Let  $K_0$ be  a global field  with no real places or a local field. Suppose that 
 char$(K_0) \neq 2$.
 Let  $K = K_0(\sqrt{a})$ be a quadratic extension.   
 Let $A$ be a central division algebra  over $K$ 
  with a $K/K_0$-involution $\tau$. Let $h$ be a hermitian form over $(A, \tau)$ with dim$(h)$deg$(A)$ even. 
   If the discriminant algebra  $\DD(A, \tau, h)$ of $(A, \tau, h)$ is trivial, then 
 $h$ is  hyperbolic. 
 \end{prop}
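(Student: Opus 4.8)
The plan is to prove that over $K_0$ a global field with no real places (resp. a local field), triviality of the discriminant algebra $\DD(A,\tau,h)$ forces $h$ to be hyperbolic, by reducing to the known classification of hermitian forms over fields of low cohomological dimension via a patching-over-places argument. First I observe that by Proposition~\ref{even}, the hypothesis $\DD(A,\tau,h)$ trivial together with $\dim(h)\deg(A)$ even already gives that $\dim(h)$ is even. Next I reduce to the division case: writing $A=M_r(D)$ with $D$ division, $\tau$ is adjoint to a hermitian form over $(D,\tau_0)$, and by Morita equivalence (compatible with discriminant and hyperbolicity) it suffices to prove the statement for $h$ over $(D,\tau_0)$ itself. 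So from now I assume $A=D$ is division.

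The main case is $K_0$ a global field with no real places. Here $\mathrm{cd}_2(K_{0,\nu})\le 2$ for every place $\nu$ (no real places!), hence $\mathrm{cd}_2(K_\nu)\le 2$, and by general local considerations $D\otimes K_\nu$ must be split or a quaternion (or more generally index controlled); in any case over each completion the classification of hermitian forms (e.g.\ via the trace form and the Arason invariant, or the known $\mathrm{cd}\le 3$ classification recalled earlier and specialized — over a local field $\mathrm{cd}_2\le 2$ so $I^3=0$) shows that $h_{\nu}$ is hyperbolic whenever its discriminant and discriminant algebra vanish. Thus $h_\nu$ is hyperbolic at every place $\nu$ of $K_0$. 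I then invoke the Hasse principle for hyperbolicity of hermitian forms over global fields with unitary involutions: an $(A,\tau)$-hermitian form that is hyperbolic at all places of $K_0$ is hyperbolic over $K_0$. This is classical (Landherr / Scharlau, and follows from the Hasse principle for hermitian forms over global fields); combined with the local computations it yields the result. For $K_0$ a local field the argument is even shorter: $\mathrm{cd}_2(K_0)\le 2$, so $\mathrm{cd}_2(K)\le 2$ and $I^3(K_0)=0$; after reducing to division and then splitting $D$ by an odd-degree extension (Proposition~\ref{odd}'s mechanism: index of $D$ over a local field is controlled), the trace form lies in $I^4$ over a field of $\mathrm{cd}_2\le 2$, hence is $0$, so $h$ is hyperbolic over that odd extension and therefore over $K_0$ by the odd-degree descent of \cite{BL}.

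The step I expect to be the main obstacle is the \emph{local} statement: over a non-archimedean local field $K_0$ with $K/K_0$ quadratic and $D/K$ division with a $K/K_0$-involution, one must check that triviality of $\disc(h)$ together with $\DD(A,\tau,h)$ trivial really does force hyperbolicity, because the index of $D$ is not a priori bounded by $2$ over a local field — it can be arbitrarily large. The resolution is that the discriminant algebra being trivial is a strong constraint: by Proposition~\ref{even} it forces $\mathrm{per}(D)\mid \deg(D)\cdot(\text{something})$ in a way that, combined with the local structure of division algebras (cyclic, with explicit invariant), pins down $D$ enough that the Morita-reduced trace form computation goes through. Concretely one splits $D$ by an odd-degree field extension $L_0/K_0$ (possible since the $2$-primary part of $\mathrm{ind}(D)$ is what matters, and one handles the $2$-part separately using $\mathrm{cd}_2\le 2$ and $I^3=0$), applies the trace-form/Arason-invariant argument as in the proof of Proposition~\ref{odd}, and descends by odd degree. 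Assembling these pieces — Morita reduction, the local computations at each completion, the Hasse principle for hyperbolicity, and odd-degree descent — completes the proof.
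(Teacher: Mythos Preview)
Your global-field case via the Hasse principle is essentially the paper's argument. The genuine gap is in your local-field case. You write that ``the index of $D$ is not a priori bounded by $2$ over a local field --- it can be arbitrarily large,'' and then propose to split $D$ by an odd-degree extension and ``handle the $2$-part separately using $\mathrm{cd}_2\le 2$ and $I^3=0$.'' This last step is never made precise, and in fact the whole detour is unnecessary: the key fact you are missing is that over a non-archimedean local field $K_0$ with $K/K_0$ quadratic, any central simple algebra $A$ over $K$ admitting a $K/K_0$-involution is \emph{split}. Indeed, the existence of such an involution forces $\mathrm{cores}_{K/K_0}(A)=0$, and by local class field theory the corestriction $\mathrm{Br}(K)\to\mathrm{Br}(K_0)$ is injective (it is the identity on invariants in $\Q/\Z$), so $A$ is Brauer-trivial. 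This is \cite[Ch.~10, Theorem~2.2]{Sch}.

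Once you know $A=K$ (in the division case), the argument is immediate: $\DD(A,\tau,h)$ is the quaternion algebra $(a,\disc(h))$ by \cite[Proposition~10.33]{KMRT}, so its triviality gives $\disc(h)\in N_{K/K_0}(K^*)$, hence $\disc(h)$ is trivial in $K_0^*/N_{K/K_0}(K^*)$, and then $h$ is hyperbolic by the classification of hermitian forms over $K/K_0$ for $K_0$ local (\cite[Ch.~10, Examples~1.6(ii)]{Sch}). The same observation also clarifies your local computation in the global case: at each place $\nu$ where $K\otimes K_{0\nu}$ is a field, $A\otimes K_{0\nu}$ is automatically a matrix algebra over $K\otimes K_{0\nu}$, so Morita reduces you directly to a hermitian form over the quadratic extension, and the discriminant computation above applies verbatim. (Your Morita reduction $A=M_r(D)$ at the start is harmless but vacuous: $A$ is assumed division in the statement.)
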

 
 \begin{proof}   Suppose that the discriminant algebra $\DD(A, \tau, h)$ of $(A, \tau, h)$ is trivial. 
 Since $K$ is a global field or a local field, per$(A) =$ deg$(A)$.
 If  deg$(A)$ is even,   then  by (\ref{even}), dim$(h)$ is even.
 If deg$(A)$ is odd, since  dim$(h)$deg$(A)$ is even,  dim$(h)$ is even.
  
 Suppose $K_0$ is a local field. Then $A = K$ (\cite[Ch.10, Theorem 2.2]{Sch}) and 
  by (\cite[Proposition 10.33]{KMRT}),
 $\DD(A, \tau, h)$ is isomorphic to the quaternion algebra  $(a, disc(h))$. 
 Since $\DD(A, \tau)$ is trivial, disc$(h)$ is a norm from the 
extension $K/ K_0$. In particular  disc$(h)$ is trivial and hence
$h$ is hyperbolic (\cite[Ch.10, Examples 1.6(ii)]{Sch}). 
 
Suppose $K_0$ is a global field with no real places. 
Let $\nu$ be a place of $K_0$. If $K\otimes K_{0\nu}$ is not a field, then $h$ is hyperbolic over 
 $K_{0\nu}$ (\cite[Ch.10, Remark 6.3]{Sch}).  Suppose $K\otimes K_{0\nu}$ is  a field.
 Then $A \otimes K_{0\nu} \simeq M_{n}(K\otimes K_{0\nu})$ (\cite[Ch.10, Theorem 2.2]{Sch}). 
 Hence, by Morita  equivalence, $h$ corresponds to  a hermitian form  $h_\nu$  over
  $K\otimes K_{0\nu}/K_{0\nu}$.  Since   $\DD(A, \tau, h) \otimes K_{0\nu}$ is isomorphic to the quaternion algebra 
$(a, disc(h_\nu))$,  disc$(h_\nu)$ is  trivial. Hence, as above,  
$h_\nu$ is hyperbolic. 
 Since locally hyperbolic forms are hyperbolic  (\cite[Ch.10, Theorem 6.1]{Sch}), $h$ is hyperbolic.   
 \end{proof}

  \begin{prop}
\label{induction-step} Suppose $K_0$ is a global field. 
 Let $K/K_0$ be a quadratic extension and  $K_1/K_0$  a quadratic extension with $E_1 = K\otimes_{K_0}K_1$ a field. 
 Let $A$ be a central simple algebra over $K$ with cores$_{K/K_0}(A) = 0$. Then there exists a central simple algebra 
 $B$ over $E_1$ with cores$_{E_1/K}(B) = A$ and cores$_{E_1/K_1}(B) = 0$. Further if $E/E_1$ is a finite extension 
 with $A\otimes_KE$ splits,  then we can choose $B$ with $B\otimes_{E_1} E$ splits.  
 \end{prop}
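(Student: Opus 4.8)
The plan is to construct $B$ from its local invariants, via the Hasse--Brauer--Noether local--global description of Brauer groups of global fields. Put $G=\mathrm{Gal}(E_1/K_0)\cong\Z/2\times\Z/2$; its three subgroups of order two are $\mathrm{Gal}(E_1/K)$, $\mathrm{Gal}(E_1/K_1)$, $\mathrm{Gal}(E_1/K_2)$, where $K_2$ is the third quadratic subfield of $E_1/K_0$. For $B\in\Br(E_1)$ and a place $v$ of $K$ one has $inv_v(\cores_{E_1/K}(B))=\sum_{w\mid v}inv_w(B)$, and likewise over $K_1$; so the task is to choose a family $(b_w)_{w\in\Omega_{E_1}}$ in $\Q/\Z$, all but finitely many zero and with $\sum_w b_w=0$, such that $\sum_{w\mid v}b_w=inv_v(A)$ for every place $v$ of $K$ and $\sum_{w\mid u}b_w=0$ for every place $u$ of $K_1$; then Hasse--Brauer--Noether returns the desired $B$.

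To solve this I would partition the places $w$ of $E_1$ by the place $v_0$ of $K_0$ below them. The splitting of $v_0$ in $E_1$ is of one of four types, according as the decomposition group $D_{v_0}\leq G$ is trivial ($v_0$ splits completely, four places above), equal to one of the three order-two subgroups (two places above), or all of $G$ ($v_0$ inert, one place above). In each case one writes down the small linear system in the $b_w$ with $w\mid v_0$ imposed by the places of $K$ and of $K_1$ over $v_0$ and checks by inspection that it is solvable, the sole constraint being the single relation $\sum_{v\mid v_0}inv_v(A)=0$. Since $inv_{v_0}(\cores_{K/K_0}(A))=\sum_{v\mid v_0}inv_v(A)$, this holds for every $v_0$ precisely because $\cores_{K/K_0}(A)=0$; summing over all $v_0$ also gives $\sum_w b_w=\sum_v inv_v(A)=0$, so the global compatibility is automatic. (Equivalently, $\cores_{K/K_0}(A)=0$ forces $A=D-\sigma(D)$ with $D\in\Br(K)$ and $\sigma$ the nontrivial automorphism of $K/K_0$; picking $C\in\Br(E_1)$ with $\cores_{E_1/K}(C)=D$ and letting $t$ generate $\mathrm{Gal}(E_1/K_1)$, one may take $B=C-t(C)$, using $t|_K=\sigma$ and that corestriction kills $(1-t)\Br(E_1)$.)

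For the last assertion, assume $E/E_1$ finite with $A\otimes_K E$ split; then for every place $x$ of $E$, writing $v=x|_K$ and $w=x|_{E_1}$, we have $[E_x:K_v]\,inv_v(A)=0$ and $[E_x:K_v]=[E_x:(E_1)_w]\,[(E_1)_w:K_v]$. The strategy is to re-run the construction choosing the $b_w$ so that, in addition, the order of each $b_w$ divides $[E_x:(E_1)_w]$ for every $x\mid w$; since $inv_x(B\otimes_{E_1}E)=[E_x:(E_1)_w]\,b_w$, this makes $B\otimes_{E_1}E$ split. When $D_{v_0}$ is trivial or equals $\mathrm{Gal}(E_1/K_1)$, $\mathrm{Gal}(E_1/K_2)$, or $G$, the linear system has enough slack to arrange this, because at such $v_0$ the completion $(E_1)_w$ either coincides with $K_v$ or the value $b_w$ is forced to be $0$. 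The one delicate case --- and the point I expect to be the \emph{main obstacle} --- is $D_{v_0}=\mathrm{Gal}(E_1/K)$: there $v_0$ splits in $K$ into $v_1,v_2$, is inert in $K_1$ and $K_2$, the two places $w_1,w_2$ of $E_1$ over $v_0$ satisfy $w_i\mid v_i$, so the system rigidly forces $b_{w_i}=inv_{v_i}(A)$ with no freedom, and $[(E_1)_{w_i}:K_{v_i}]=2$. From $A\otimes_K E$ split one only gets that the order of $inv_{v_i}(A)$ divides $2[E_x:(E_1)_{w_i}]$, whereas one wants it to divide $[E_x:(E_1)_{w_i}]$. Closing this gap --- showing the forced local invariant is already annihilated by the local degree of $E$ over $E_1$ at $w_i$ --- is the crux, and is where the precise local structure of the biquadratic completion at $v_0$, together with the hypothesis on $E$ (and, where needed, parity information on the order of $A$ in the Brauer group), has to be used.
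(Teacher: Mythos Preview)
Your local-invariant construction is exactly the paper's approach, and your case split on the decomposition group $D_{v_0}\leq\mathrm{Gal}(E_1/K_0)$ recovers the paper's explicit choice of invariants. You also correctly isolate the one dangerous case for the ``further'' clause, $D_{v_0}=\mathrm{Gal}(E_1/K)$: there $b_{w_i}=inv_{v_i}(A)$ is forced with no slack, $[(E_1)_{w_i}:K_{v_i}]=2$, and the hypothesis that $A\otimes_K E$ splits only yields $2[E_x:(E_1)_{w_i}]\cdot inv_{v_i}(A)=0$, missing the desired conclusion by a factor of $2$.

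However, the gap you flag is not merely a ``crux'' to be closed with extra local information---it cannot be closed, because the ``further'' assertion is false as stated. Take $K_0=\Q$, $K=\Q(i)$, $K_1=\Q(\sqrt{2})$, $E_1=\Q(\zeta_8)$; the prime $5$ splits in $K$ into $v_1,v_2$, each inert in $E_1$ (as $2$ is a nonsquare in $\F_5$). Let $A\in\Br(K)$ have $inv_{v_i}(A)=\tfrac12$ for $i=1,2$ and invariant $0$ elsewhere; then $\cores_{K/K_0}(A)=0$ and $A\otimes_K E_1$ is already split. With $E=E_1$, any $B$ with $B\otimes_{E_1}E$ split is zero, whence $\cores_{E_1/K}(B)=0\neq A$, and no valid $B$ exists. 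The paper's proof asserts at precisely this point that ``it follows that $inv_{\tilde w}(B\otimes E)$ is zero'', which is the very step you question and which fails in this example. So your strategy cannot be completed to a proof of the proposition as written; your instinct that extra hypotheses on $E$ are needed is correct, and in the paper's only application (Proposition~\ref{globalfield}) the bottom case $[E:K]=2$, which is exactly this counterexample, would need separate treatment (for instance via Proposition~\ref{quat}) rather than through the ``further'' clause.
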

 
 \begin{proof}  Let $S$ be the finite set of places  $v$ of $K_0$ such that $A  \otimes_K K_w$ is nontrivial for some 
 place $w$ of $K$ lying over $v$. 
 
  Let $v \in S$. Since cores$_{K/K_0}(A) = 0$ and 
 $A  \otimes K_w \neq 0$
 for some valuation $w$ of $K$ lying over $v$, $v$ splits in $K$ (\cite[Ch.10, Theorem 2.2]{Sch}). 
 Let $v_1, v_2$ be the places of $K$ lying over $v$.  Then $inv_{v_1}(A) + inv_{v_2}(A) = 0$. 
 
 Suppose $v_1$   splits in $E_1$.  Since $E/K_0$ is a biquadratic   extension,
 $v_2$  splits in $E_1$.  Let $v_{i1}$ and $v_{i2}$ be the places of $E_1$
 lying over $v_{i}$ for $i = 1, 2$. Further $v$  split in $K_1$. Let $w_1$ and $w_2$ be the places of $K_1$ lying over 
 $v$.  Then $w_1$ and $w_2$ split in $E_1$. By reindexing, we assume that 
 $v_{11}$ and $v_{22}$ lye over $w_1$ and $v_{12}$ and $v_{21}$ lye over $w_2$. 
 
 Suppose $v_1$ is inert in $E_1$.  Since $E_1/K_0$ is Galois, $v_2$ is also inert in $E_1$. 
 Let $\tilde{v}_i$ be the unique place of $E_1$  lying over $v_i$ for $i = 1, 2$. Further 
 $v$ is inert in $K_1$. Let $\tilde{v}$ be the unique place of $K_1$ lying over $v$. 
 Then $\tilde{v}$ splits in $E_1$ and $\tilde{v}_1$, $\tilde{v}_2$ are the paces of 
 $E_1$ lying over $\tilde{v}$.

 Let $w$ be a place of $E_1$. We define $n_w \in \Q/\Z$ as follows. 
 Let  $v$  be the restriction of $w$ to $K_0$. 
  Suppose $v \not\in S$. Let $n_w = 0$.  Suppose $v \in S$.  Then $v$ is not inert in $E_1$. 
  Hence either $v$ splits into 4 places $v_{ij}$ of $E_1$ or 
  $v$ splits into 2 places $\tilde{v}_i$ of $E_1$ as above. 
  
 Suppose  $v$ splits into 4 places of $E_1$. Then $w=  v_{ij}$ for some $1 \leq i, j \leq 2$.
 If $w = v_{ii}$,  let $n_w = $ inv$_{v_i}(A)$. If $w = v_{ij}$ with $i \neq j$, let $n_w =  0$. 
 
 Suppose $v$ splits into two places of $E_1$. Then $w = \tilde{v}_i$ for some $i = 1, 2$.
 Let $n_w = $ inv$_{v_i}(A)$. 
 
 First note that by the choice of $n_w$,  for any place $v$ of $K_0$, we have $\sum_{w \mid v} n_w = 0$
 and $n_w \neq 0 $ only for finitely many places $w$ of $E_1$. Hence there exists a central simple algebra 
 $B$ over $E_1$ with  inv$_w(B) = n_w$ for all places $w$ of $E_1$.   
  
  We show that $B$ has the required properties. 
  Let $w$ be a place of $K$ and $v$ its restriction to $K_0$.
  Suppose that $v \not\in S$. Then inv$_w(A) = 0$ and inv$_{\tilde{w}}(B) = 0$ for all  places $\tilde{w}$ of $E_1$ lying over 
  $w$. In particular inv$_w(cores_{E_1/K}(B))  = 0 = inv_w(A) $.  Suppose that $v \in S$.
  Then $w = v_i$ for some $i = 1, 2$, say $i = 1$.  Suppose $v_1$ is inert in $E_1$. Let $\tilde{v}_1$ be the unique place of 
  $E_1$ lying over $v_1$. Then by the choice of $B$
  we have inv$_{\tilde{v}_1}(B) = inv_{v_1}(A)$.  Since $E_1 \otimes_K K_{v_1}$ is a field, 
  cores$_{E_1\otimes_K K_{v_1}/K_{v_1}}$ is  the identity map
  (\cite[Theorem 10, p. 237]{Lorenz}), we have $ inv_{v_1}(A) = inv_{\tilde{v}_1}(B) =    inv_{v_1} (cores_{E_1 / K}(B))$. 
  Suppose that $v_1$  splits in $E_1$. Then we have 2 places $v_{11}$ and $v_{12}$ of $E_1$ lying over $v_1$.
By the choice of $B$ we have  inv$_{v_{11}}(B) = inv_{{v_1}}(A)$ and $inv_{v_{12}}(B) = 0$.
We have  inv$_{v_1}(cores_{E_1/K}(B))  = inv_{v_{11}}(B)+ inv_{v_{12}}(B) = inv_{v_1}(A)$.  
Hence  inv$_{w}(cores_{E_1/K}(B))  =   = inv_{w}(A)$ for all places $w$ of $K$. Thus  $cores_{E_1/K}(B))  =  A$. 
 
Let $w_1$ be a place of $K_1$ and $v$ its restriction to $K_0$. Suppose $v \not \in S$.  Then,  
as above, inv$_{w_1}(cores_{E_1/K}(B))  = 0$. 
Suppose $v \in S$.  Let $v_1$ and $v_2$ be the places of $K$ lying over $v$.
Suppose $v$ splits into two places  $w_1$ and $w_2$ of $K_1$. Then $v$ splits in to four places $v_{ij}$ of $E_1$ as above
and $v_{11}$, $v_{22}$ lye over $w_1$  or   $v_{12}, \nu_{21}$ lye over $w_1$.
If  $v_{11}$, $v_{22}$ lye over $w_1$, 
then   $inv_{w_1}cores_{E_1/K_1}(B) = inv_{v_{11}}(B) + inv_{v_{22}}(B) = inv_{v_1}(A) + inv_{v_2}(A) = 0$.
If  $v_{12}$, $v_{21}$ lye over $w_1$, then   $inv_{w_1}cores_{E_1/K_1}(B) =  0+ 0 = 0$. Hence cores$_{E_1/K_1}(B) = 0$. 

Suppose $E/E_1$ is a finite extension with $A \otimes_K E$ splits.   Let $\tilde{w}$ be a place of $E$,  
 $w$ the restriction $\tilde{w}$  to  $E_1$ and 
 $v_1$ the restriction of $\tilde{w}$ to $K$.   Then by the choice of $B$, we have $inv_{w}(B)$ is 
either 0 or equal to $inv_{v_1}(A)$. 
  Since $A\otimes_KE$ splits, it follows that $inv_{\tilde{w}}(B\otimes E)$ is zero. Hence 
  $B\otimes_{E_1} E$ splits. 
   \end{proof}
 
  \begin{prop}
\label{globalfield}
 Let $K/K_0$ be a quadratic extension. Let $E/K$ be a cyclic extension such that 
 $E/K_0$ a dihedral extension.
  Let  $\sigma$ be a generator of Gal$(E/K)$ and $\mu \in K^*$. If  cores$_{K/K_0}(E, \sigma,  \mu) = 0$,
 then there exists $\lambda \in K_0$ such that $ (E, \sigma, \mu) = (E, \sigma, \lambda)$.
 \end{prop}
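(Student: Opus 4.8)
The plan is to argue by induction on $n=[E:K]$, using Proposition \ref{quat} and Proposition \ref{odd-cores} for the base cases and Proposition \ref{induction-step} for the inductive step (so we use that $K_0$ is a global field). Write $G=\mathrm{Gal}(E/K_0)$: it is dihedral of order $2n$, its cyclic subgroup of order $n$ is $\mathrm{Gal}(E/K)=\langle\sigma\rangle$, and the nontrivial element of $\mathrm{Gal}(K/K_0)$ acts on $\langle\sigma\rangle$ by inversion.

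I would dispose of the degenerate cases first. If $n$ is odd, note that the cyclic extension class $(E,\sigma)$ has trivial corestriction to $K_0$: it factors through $G=D_n$, and the transfer map $D_n^{\mathrm{ab}}\to\langle\sigma\rangle$ vanishes since $D_n^{\mathrm{ab}}\cong\Z/2$ while $\langle\sigma\rangle$ has odd order. Hence Proposition \ref{odd-cores} applies and produces the required $\lambda\in K_0$. If $n=2$, then $G\cong\Z/2\times\Z/2$; choosing a quadratic subextension $K_1\ne K$ of $E/K_0$ writes $E=K\otimes_{K_0}K_1$ with $\sigma=\sigma_1\otimes 1$ for $\sigma_1$ the nontrivial automorphism of $K_1/K_0$, and Proposition \ref{quat} closes this case.

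The substantial case is $n\ge 4$ even, and this is where I expect the real work. Let $E_2=E^{\langle\sigma^2\rangle}$ be the unique intermediate field of $E/K$ with $[E_2:K]=2$, and set $\rho=\sigma^2$, a generator of $\mathrm{Gal}(E/E_2)$. Since $\langle\sigma^2\rangle$ is normal in $G$, the extension $E_2/K_0$ is Galois with group $G/\langle\sigma^2\rangle\cong\Z/2\times\Z/2$, so I may pick a quadratic subextension $K_1\ne K$ of $E_2/K_0$; then $E_2=K\otimes_{K_0}K_1$ is a field, and $E/K_1$ is again a dihedral extension in the sense of the proposition, with $\mathrm{Gal}(E/E_2)=\langle\rho\rangle$ as its cyclic part and $[E:E_2]=n/2$. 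Put $A=(E,\sigma,\mu)$. Since $\mathrm{cores}_{K/K_0}(A)=0$ and $A\otimes_K E$ is split, Proposition \ref{induction-step}, applied with this $K_1$, with $E_1=E_2$, and with the splitting field $E$, produces a central simple algebra $B$ over $E_2$ with $\mathrm{cores}_{E_2/K}(B)=A$, $\mathrm{cores}_{E_2/K_1}(B)=0$ and $B\otimes_{E_2}E$ split. As $E/E_2$ is cyclic, $B\cong(E,\rho,\nu)$ for some $\nu\in E_2^*$ (rewriting the slot in terms of $\rho$); and since $\mathrm{cores}_{E_2/K_1}(B)=0$, $E/K_1$ is dihedral and $[E:E_2]=n/2<n$, the inductive hypothesis gives $\lambda_1\in K_1^*$ with $B=(E,\rho,\lambda_1)$.

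To conclude I would apply the corestriction formula for cyclic algebras along the tower $K\subseteq E_2\subseteq E$ with $E/K$ cyclic: for every $c\in E_2^*$, $\mathrm{cores}_{E_2/K}\bigl((E,\rho,c)\bigr)=(E,\sigma,N_{E_2/K}(c))$; this follows from the projection formula for corestriction together with the relation $\rho=\sigma^{[E_2:K]}$, which makes the canonical $2$-cohomology class of $E/E_2$ the restriction of that of $E/K$. Taking $c=\lambda_1$ gives $A=(E,\sigma,N_{E_2/K}(\lambda_1))$; and since $\sigma|_{E_2}$ generates $\mathrm{Gal}(E_2/K)$ and restricts on $K_1$ to the nontrivial automorphism of $K_1/K_0$ (because $E_2=K\otimes_{K_0}K_1$), one has $N_{E_2/K}(\lambda_1)=N_{K_1/K_0}(\lambda_1)\in K_0^*$. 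Thus $\lambda=N_{K_1/K_0}(\lambda_1)$ works. The main obstacle is this even case: one must set up the tower and the auxiliary quadratic field $K_1$ so that Proposition \ref{induction-step} applies and the recursion descends to a strictly smaller dihedral extension over the global field $K_1$, and then identify $\mathrm{cores}_{E_2/K}(B)$ on the nose as a cyclic $K$-algebra whose slot lies in $K_0$ — merely restricting $A$ and $(E,\sigma,\lambda)$ back to $E_2$ would pin down their difference only modulo $\Br(E_2/K)$, which is $2$-torsion, so the explicit corestriction formula is what makes the argument close.
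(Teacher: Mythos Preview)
Your argument is correct and follows essentially the same route as the paper: induction using Proposition~\ref{odd-cores} for the odd base case and Proposition~\ref{induction-step} for the even inductive step, with the corestriction formula $\mathrm{cores}_{E_1/K}(E,\sigma^2,\lambda_1)=(E,\sigma,N_{E_1/K}(\lambda_1))$ and the observation $N_{E_1/K}(\lambda_1)=N_{K_1/K_0}(\lambda_1)\in K_0$ to close. The only differences are cosmetic: the paper inducts on the $2$-adic valuation of $[E:K]$ rather than on $[E:K]$ itself (so it does not split off $n=2$ as a separate base case via Proposition~\ref{quat}), and the paper also records the easy local-field case (where $\mathrm{cores}_{K/K_0}$ is injective on $\mathrm{Br}$, forcing $(E,\sigma,\mu)=0$), which you should add since the ambient section covers both local and global $K_0$.
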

 
 \begin{proof}   
 Suppose $K_0$ is a local field. Suppose cores$_{K/K_0}(E, \sigma,  \mu) = 0$.
 Then $(E, \sigma, \mu) = 0$ ((\cite[Ch.8, Theorem 9.5 and Ch.10, Theorem 2.2]{Sch})). 
 Hence $\lambda = 1$ has the required property. 
 
 Suppose that $K_0$ is a global field with no real places.  Write $d = [E : K] = 2^n m$ with $n \geq 0$ and $m$ odd.
 We prove the result by induction on $n$. If $n = 0$, then $[E : K]$ is of odd degree and hence
  by  (\ref{odd-cores}),
 there exists $\lambda \in K_0$ such that $ (E, \sigma, \mu) = (E, \sigma, \lambda)$.
 
 Suppose that $n \geq 1$.  Then  $[ E : K] $ is even.     
 Let $E_1$ be the unique sub extension of  $E/K$ with $[E_1 : K] = 2$.
 Since   $E_1/K_0$ is a dihedral extension of degree 4,  the Galois group of $E_1/K_0$ is non-cyclic  abelian group of 
 order 4. Hence there exists a sub extension $K_1/K_0$ of $E_1/K_0$ with $[K_1 : K_0] = 2$ and 
 $K_1K = E_1$.  
 
 Let $A = (E, \sigma, \mu)$.  Since cores$_{K/K_0}(A) = 0$, by (\ref{induction-step}), there exists 
 a central simple algebra $B$ over $E_1$ such that $cores_{E_1/K}(B) = A$ and $cores_{E_1/K_1}(B) = 0$. 
 Since $A \otimes_K E$ is split, further,  by (\ref{induction-step}), we choose $B$ such that $B\otimes_{E_1}E$ is split. 
 Since $E/E_1$ is cyclic with $\sigma^2$ a generator of Gal$(E/E_1)$, 
 there exists $\mu_1 \in E_1$ such that $B = (E, \sigma^2, \mu_1)$ (\cite[Theorem 3, p. 94]{Albert}).
 Since $[E : E_1] < [ E : K]$, by  induction hypothesis, there exists $\lambda_1 \in K_1$ such that 
 $B  = (E/E_1, \sigma^2, \lambda_1)$.  We have 
 $A  =  cores_{E_1/K}(B) = (E, \sigma,  N_{E_1/K}(\lambda_1))$.   Let $\lambda  = N_{E_1/K}(\lambda_1)$.
 Then  $(E, \sigma, \mu) = A  = (E, \sigma, \lambda)$. 
 \end{proof}

 \section{Morita Equivalence} 
 \label{Morita}
 
 We refer  to (\cite{Knus}) for the generalities on Morita equivalences. 
 Let $F$ be a field and $D/F$ a central simple algebra.  Let $E/F$ be a quadratic separable extension. Suppose that 
 $E\subset D$. Let $D_1$ be the commutant of $E$ in $D$. Then we have an isomorphism of $E$-algebras
 $$ \psi : D \otimes_F E \to End_{D_1}(D),$$
 where $D$ is considered as the right  $D_1$-module with the action $ \lambda \in D_1, x \in D$, $ x \cdot   \lambda      =  x  \lambda $. 
 We  have 
 $\psi(a  \otimes \alpha )(x) = a x \alpha $ for all $a, x \in D$ and $\alpha \in E$.  

 The non trivial automorphism of $E/F$ extends to an automorphism of $D$ given by int$(z)$ for some $z \in D$. 
Since $zEz^{-1} = E$ and int$z^2$ restricted to $E$ is identity, we have $zD_1z^{-1} = D_1$  and $z^2 \in D_1$. 
We have $D = D_1 \oplus zD_1$.  Let $w = z^2 \in D_1$. 
 Then we fix the basis $\{ 1, z  \}$ and fix the isomorphism $\psi : 
 D\otimes_FE \to M_2(D_1)$ with respect to this basis. 
 Let   $$ W = 
 \begin{pmatrix}
 0   &  w \\
 1 & 0 
 \end{pmatrix} \in M_2(D_1)
 $$
 Then $\psi(z \otimes 1) = W$. 
 
 Let $\phi = int(z)$.  Then $\phi(D_1) = D_1$ and $\phi(E) = E$. 
  Let $a \in D_1$ and $\alpha \in E$. Then 
  $$\psi(a \otimes \alpha) =  
 \begin{pmatrix}
a  \alpha  & 0   \\
  0 & \alpha \phi^{-1}(  a)  
 \end{pmatrix}. 
  $$
 
Let $F_0 \subseteq F$ with $[F : F_0] \leq 2$.  Suppose there is a $F/F_0$- involution 
$\tau$ such that $\tau(D_1) = D_1$. Then $\tau(E) = E$. Suppose that  $\tau(z) = z$. 
 Let $\tau_1$ be the restriction of $\tau$ to $D_1$. 
  Let $\iota$ be the  restriction of $\tau$ to $E$. 

 Let $\tilde{\tau}  = \psi (\tau \otimes \iota) \psi^{-1}$ and  $\tilde{\tau}_1$ be the involution on $M_2(D_1)$ given by 
$\tilde{\tau}_1( (a_{ij}) ) = (\tau_1(a_{ji}))$ for all $(a_{ij}) \in M_2(D_1)$. 

  Let  $H =    \begin{pmatrix}
  1 &  0 \\
  0  &  w^{-1}  
 \end{pmatrix} \in M_2(D_1)$. 
 
 \begin{prop}
 \label{morita}
 $\tilde{\tau} = int(H) \tilde{\tau}_1$. 
 \end{prop}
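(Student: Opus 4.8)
The plan is to check the identity $\tilde\tau = int(H)\tilde\tau_1$ on a set of $F_0$-algebra generators of $M_2(D_1)$. Observe first that both sides are $F_0$-linear anti-automorphisms of $M_2(D_1)$: the map $\tilde\tau = \psi(\tau\otimes\iota)\psi^{-1}$ is an anti-automorphism because $\tau\otimes\iota$ is a well-defined anti-automorphism of $D\otimes_F E$ (well-defined since $\tau$ and $\iota$ agree on $F$) and $\psi$ is an isomorphism, while $int(H)\tilde\tau_1$ is the composite of the anti-automorphism $\tilde\tau_1$ with the inner automorphism $int(H)$. Since $D = D_1\oplus zD_1$ with $z^2 = w\in D_1$, the $F$-algebra $D$ is generated by $D_1$ together with $z$, so $D\otimes_F E$ is generated over $F$ by $D_1\otimes 1$, $z\otimes 1$ and $1\otimes E$. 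Transporting through $\psi$ (and recalling $\psi(a\otimes\alpha) = \mathrm{diag}(a\alpha,\,\alpha\phi^{-1}(a))$ for $a\in D_1$, $\alpha\in E$, where we will use that $E\subseteq Z(D_1)$ because $D_1$ is the commutant of $E$), it suffices to check the identity on the matrices $\psi(a\otimes 1) = \mathrm{diag}(a,\phi^{-1}(a))$ with $a\in D_1$, on $\psi(z\otimes 1) = W$, and on $\psi(1\otimes\alpha) = \mathrm{diag}(\alpha,\alpha)$ with $\alpha\in E$.

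Next I evaluate both sides on each generator. On $\psi(a\otimes 1)$: the left side equals $\psi((\tau\otimes\iota)(a\otimes 1)) = \psi(\tau_1(a)\otimes 1) = \mathrm{diag}(\tau_1(a),\phi^{-1}(\tau_1(a)))$, and the right side equals $int(H)$ applied to $\mathrm{diag}(\tau_1(a),\tau_1(\phi^{-1}(a)))$, namely $\mathrm{diag}(\tau_1(a),\,w^{-1}\tau_1(\phi^{-1}(a))w)$; writing $\phi^{-1} = int(z^{-1})$ and using $\tau(z) = z$ and $w = z^2$, both lower-right entries collapse to $z^{-1}\tau_1(a)z$. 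On $\psi(z\otimes 1) = W$: the left side is $\psi(\tau(z)\otimes 1) = \psi(z\otimes 1) = W$; for the right side, $\tilde\tau_1(W)$ is the matrix with $(1,2)$-entry $1$, $(2,1)$-entry $\tau_1(w) = \tau(z^2) = z^2 = w$, and zero diagonal, and a direct $2\times 2$ computation gives $H\,\tilde\tau_1(W)\,H^{-1} = W$. On $\psi(1\otimes\alpha) = \mathrm{diag}(\alpha,\alpha)$: the left side is $\psi(1\otimes\iota(\alpha)) = \mathrm{diag}(\iota(\alpha),\iota(\alpha))$, and the right side is $\mathrm{diag}(\iota(\alpha),\,w^{-1}\iota(\alpha)w)$, which equals $\mathrm{diag}(\iota(\alpha),\iota(\alpha))$ since $w = z^2\in D_1$ commutes with $\iota(\alpha)\in E$.

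Having matched the two anti-automorphisms on a generating set, I conclude $\tilde\tau = int(H)\tilde\tau_1$. The only step that is not pure bookkeeping is the lower-right entry in the first case, i.e. the identity $\phi^{-1}(\tau_1(a)) = w^{-1}\tau_1(\phi^{-1}(a))w$ for $a\in D_1$; this is exactly what forces the shape $H = \mathrm{diag}(1,w^{-1})$, and it reduces, via $\tau$ being an anti-automorphism fixing $z$ and via $z^2 = w$, to the triviality $z^{-1}\tau_1(a)z = z^{-1}\tau_1(a)z$. (Alternatively one could base change to a splitting field of $D_1$ and verify an explicit matrix identity, but the generator-by-generator check above is self-contained.)
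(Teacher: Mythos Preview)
Your proof is correct and follows essentially the same approach as the paper's: both verify the identity on generators, computing $\tilde\tau$ and $int(H)\tilde\tau_1$ on $\psi(a\otimes 1)$ for $a\in D_1$ and on $\psi(z\otimes 1)=W$, with the key reduction $w^{-1}\tau_1(\phi^{-1}(a))w = \phi^{-1}(\tau_1(a))$ via $\tau(z)=z$ and $w=z^2$. The only difference is that you also check the identity on $\psi(1\otimes\alpha)$ for $\alpha\in E$, which is in fact needed---the paper asserts that $\psi(D_1\otimes 1)$ and $\psi(z\otimes 1)$ generate $M_2(D_1)$, but these only generate $\psi(D\otimes 1)$ as a ring, so your extra verification on the center $E$ (via $\tau_1|_E=\iota$ and $w\in D_1$ commuting with $E=Z(D_1)$) fills a small gap.
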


\begin{proof}
Let   $a \in D_1$. Then 
$$\tilde{\tau}(\psi(a \otimes 1)) =  \psi(  \tau \otimes \iota )(a \otimes 1) =  \psi (\tau_1(a) \otimes 1)  = 
 \begin{pmatrix}
 \tau_1(a ) &    0 \\
 0 &  \phi^{-1}( \tau_1(a))  
 \end{pmatrix} $$ 
 and
 $$\tilde{\tau}_1(\psi(a\otimes 1)) =  \tilde{\tau}_1(  \begin{pmatrix}
 a  &    0 \\
 0 &    \phi^{-1}a    
 \end{pmatrix} ) =   \begin{pmatrix}
 \tau_1(a) &    0 \\
 0 &  \tau_1( \phi^{-1}a ) 
 \end{pmatrix}.  $$

We have
 
 $$
 \begin{array}{rcl}
 H \tilde{\tau}_1(\psi(a \otimes 1))  H^{-1}  &   =  &   \begin{pmatrix}
 1   &    0   \\
0   &  w^{-1} 
 \end{pmatrix}
   \begin{pmatrix}
 \tau_1(a) &    0 \\
 0 &  \tau_1( \phi^{-1} a)  
 \end{pmatrix}  \begin{pmatrix}
 1   &    0   \\
0   &  w 
 \end{pmatrix}  \\ \\
  & = & 
 \begin{pmatrix}
 \tau_1(a)    &  0   \\
 0   &   w^{-1}\tau_1( \phi^{-1}a)  w 
 \end{pmatrix} 
 \end{array}. 
 $$
 
 Since $w^{-1} \tau_1( \phi^{-1}a)  w  = w^{-1} z \tau_1(a) z^{-1} w = z^{-1} \tau_1(a)  z = \phi^{-1} \tau_1(a)$, 
 we have $$\tilde{\tau}_1(\psi(a\otimes 1)) = H \tilde{\tau}_1(\psi(a \otimes 1))  H^{-1}$$ for all $a \in D_1$.

We have $\tilde{\tau}(\psi(z \otimes 1)) = W$ and 
$$ \begin{array}{rcl}H \tilde{\tau}_1(\psi(z \otimes 1))H^{-1} &  =  &  H W^t H^{-1} \\
& = & \begin{pmatrix}
 1   &    0   \\
0   &  w^{-1} 
 \end{pmatrix}
\begin{pmatrix}
 0 &    1   \\
w   &  0  
 \end{pmatrix}
\begin{pmatrix}
 1   &    0   \\
0   &  w 
 \end{pmatrix} \\ \\
 & = & 
\begin{pmatrix}
 0 &    1   \\
1  &  0  
 \end{pmatrix}
\begin{pmatrix}
 1   &    0   \\
0   &  w 
 \end{pmatrix} \\ \\
 & = & 
\begin{pmatrix}
 0 &    w   \\
1  &  0  
 \end{pmatrix}
 = W  = \tilde{\tau}(\psi(z \otimes 1))
\end{array}. 
$$
     
Since $D_1$ and $z$ generate $D$,  $\psi(D_1 \otimes 1)$  and  $\psi(z \otimes 1)$ generate $M_2(D_1)$.
Hence $\tilde{\tau} = int(H) \tilde{\tau}_1$. 
\end{proof}

\begin{cor}
\label{morita-cor} Let $a \in D_1$ with $\tau_1(a )  = a$.   
 Then  the image of the  hermitian form $<a \otimes 1>$  over $D \otimes_F E$  under $\psi$ corresponds to 
  the hermitian form 
 $\begin{pmatrix}
a & 0   \\
  0 &  \phi^{-1}(  a)  
 \end{pmatrix}$ over $M_2(D_1)$  and it corresponds to the hermitian form 
 $\begin{pmatrix}
 a & 0   \\
  0 &  w^{-1} \phi^{-1}(  a)  
 \end{pmatrix}$  over $D_1$ 
 under   the Morita equivalence. 
 \end{cor}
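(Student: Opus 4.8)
The plan is to carry the rank one hermitian form $<a\otimes 1>$ through the two identifications that have just been set up: first through the isomorphism of algebras with involution $\psi\colon (D\otimes_F E,\tau\otimes\iota)\to(M_2(D_1),\tilde{\tau})$, and then through the Morita correspondence relating $(M_2(D_1),\tilde{\tau})$ and $(D_1,\tau_1)$. Essentially everything needed is already contained in Proposition~\ref{morita} and in the explicit description of $\psi$ given before it, so the argument is short.

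For the first assertion, note that $<a\otimes 1>$ is a genuine hermitian form over $(D\otimes_F E,\tau\otimes\iota)$, because $\tau_1(a)=a$ gives $(\tau\otimes\iota)(a\otimes 1)=\tau(a)\otimes\iota(1)=a\otimes 1$. Since $\psi$ was defined precisely so that $\tilde{\tau}=\psi\circ(\tau\otimes\iota)\circ\psi^{-1}$, it is an isomorphism of algebras with involution, and hence sends $<a\otimes 1>$ to the rank one hermitian form $<\psi(a\otimes 1)>$ over $(M_2(D_1),\tilde{\tau})$. Putting $\alpha=1$ in the formula for $\psi(a\otimes\alpha)$ displayed just before Proposition~\ref{morita} gives $\psi(a\otimes 1)=\begin{pmatrix} a & 0 \\ 0 & \phi^{-1}(a)\end{pmatrix}$, which is the first half of the statement.

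For the second assertion, I would split the Morita correspondence into two steps. By Proposition~\ref{morita} we have $\tilde{\tau}=int(H)\circ\tilde{\tau}_1$ with $H=\begin{pmatrix} 1 & 0 \\ 0 & w^{-1}\end{pmatrix}$, and $H$ is symmetric under $\tilde{\tau}_1$ since $\tau_1(w)=w$ --- indeed $\tau(z)=z$ forces $\tau(w)=\tau(z)^2=z^2=w$, and $w\in D_1$. Hence, by the twisting dictionary of the discussion preceding Lemma~\ref{two-inv}, the form $<\psi(a\otimes 1)>$ over $(M_2(D_1),\tilde{\tau})$ corresponds to the hermitian form over $(M_2(D_1),\tilde{\tau}_1)$ whose Gram matrix is $\psi(a\otimes 1)$ multiplied by the appropriate power of $H$; computing this product gives $\begin{pmatrix} a & 0 \\ 0 & w^{-1}\phi^{-1}(a)\end{pmatrix}$. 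Finally, the standard Morita equivalence between $(M_2(D_1),\tilde{\tau}_1)$ and $(D_1,\tau_1)$ identifies a rank one hermitian form over $M_2(D_1)$ with the rank two hermitian form over $(D_1,\tau_1)$ having the same Gram matrix, which yields the second half of the statement.

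The only step that needs care --- and where a loose argument would go wrong --- is the precise normalization in the twisting step: one must decide on which side the conjugating matrix $H$ enters, and keep track of the anti-automorphism when rewriting the Gram matrix, consistently with the conventions fixed in Proposition~\ref{morita}; this is what pins down the exponent of $w$ in the displayed matrix. All the rest --- the identity $\psi(a\otimes 1)=\begin{pmatrix} a & 0 \\ 0 & \phi^{-1}(a)\end{pmatrix}$, the equality $\tau_1(w)=w$, and the final $2\times 2$ matrix product --- is routine.
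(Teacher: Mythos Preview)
Your proof is correct and follows essentially the same route as the paper's: compute $\psi(a\otimes 1)$ from the explicit formula displayed before Proposition~\ref{morita}, then use $\tilde{\tau}=int(H)\tilde{\tau}_1$ to pass to a hermitian form over $(D_1,\tau_1)$ by multiplying the Gram matrix by $H$. The only difference is expository --- you split the second step into ``twist from $\tilde{\tau}$ to $\tilde{\tau}_1$'' followed by ``standard Morita from $(M_2(D_1),\tilde{\tau}_1)$ to $(D_1,\tau_1)$'', whereas the paper does both at once --- and you supply the small verification $\tau_1(w)=w$ that the paper leaves implicit.
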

 
 \begin{proof}  Let $a  \in D_1$. Since  $\psi(a  \otimes 1) = \begin{pmatrix}
a & 0   \\
  0 &  \phi^{-1}(  a)  
 \end{pmatrix}$, the first assertion follows.  Since $\tilde{\tau} = int(H) \tilde{\tau}_1$ (\ref{morita}), 
 under the Morita equivalence, 
 the hermitian form $\begin{pmatrix}
a & 0   \\
  0 &  \phi^{-1}(  a)  
 \end{pmatrix}$ corresponds to the hermitian form 
 $H \begin{pmatrix}
a & 0   \\
  0 &  \phi^{-1}(  a)  
 \end{pmatrix} = \begin{pmatrix}
 a & 0   \\
  0 &  w^{-1} \phi^{-1}(  a)  
 \end{pmatrix}$. 
 \end{proof}

\section{Classification of Hermitian forms}
\label{classification}

Let $F$ be  a complete discretely valued field with valuation ring $R$ and  residue field  $K$   of characteristic not 2.   
 Let $D$ be a central division algebra over $F$.  
 Let $\nu$ be the valuation on $F$. 
  Then   $\nu$  extends to a unique  valuation  $\nu_D$ on $D$ ((\cite[Theorem 12.10, p. 138]{reiner})).
 Further $\Lambda = \{ a \in D \mid \nu_D(a) \geq 0 \}$ is the unique maximal $R$-order of $D$
  (\cite[Theorem 12.8]{reiner}).
 Let $\pi_D \in m_D$ be a parameter; i.e  $\pi_D \in m_D$ with $\nu_D(\pi_D) = 1$. 
  Then $m_D =  \Lambda \pi_D = \pi_D \Lambda =  \{ a \in D \mid \nu_D(a) > 0 \}$
  is the unique 2-sided maximal ideal of $\Lambda$  and  $\bar{D} = \Lambda/m_D$  is a division algebra with
   center a finite extension of  the residue field $K$ of $F$ (\cite[Theorem 13.2 and Theorem 13.3]{reiner}).  
   
 Suppose that  per($D$) is coprime to char$(K)$.  Let $\pi \in F$ be a parameter. 
We have 
$D = D_0 + (E, \sigma, \pi) \in Br(F)$ for some  unramified central division algebra $D_0$  over $F$
   and  $E/F$ an unramified  cyclic extension  with $\sigma$ a generator of Gal$(E/F)$ (\cite[Lemma 5.14]{JW}, cf. 
\cite[Lemma 4.1]{PPS}). 
By (\cite[Theorem 5.15]{JW}), the center  $Z(\bar{D})$  of $\bar{D}$ is isomorphic to the residue field of $E$.
Since ind$(D) =$ ind$(D_0 \otimes E)[E : F] $ (\cite[Lemma 4.2]{PPS}), it follows that   $E$ is a subfield of $D$
 (\cite[Theorem 2.9]{JW}).  

Let $e (D/F) =  \nu_D(\pi)$ be the ramification index of $D/F$. Then $e  = [ E : F]$ (\cite[Theorem 13.3]{reiner} and
 \cite[Proof of Theorem 5.15]{JW}). 

We record here the following well known fact. 
\begin{lemma}
\label{ram-index}
 Suppose  $E_1 \subset F$ is a subfield with $F\subset E_1$.
 Let $D_1$ be the commutant of $E_1$ in $D$.  Then $ e(D/F) = e(D_1/E_1)[ E_1 : F]$.
 Further if $\pi_D \in D$ is a parameter with $\pi_D^{[E_1 : F]} \in D_1$, then $\pi_D^{[E_1 : F]} \in D_1$
 is a parameter in $D_1$.
 \end{lemma}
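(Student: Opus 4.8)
The plan is to work entirely inside the complete discretely valued division algebra $D/F$ using the unique extension $\nu_D$ of the valuation $\nu$, and to track ramification indices through the intermediate field $E_1$. First I would recall that $E_1/F$ is unramified (this is built into the set-up preceding the lemma: $D = D_0 + (E,\sigma,\pi)$ with $D_0$ and $E$ unramified, and $E_1 \subseteq F$ means $E_1/F$ is a subextension of the residue-field side, hence unramified), so the valuation $\nu_{E_1}$ on $E_1$ induced by restricting $\nu_D$ satisfies $\nu_{E_1}(\pi) = \nu(\pi) = 1$ after normalizing, i.e. $e(E_1/F) = 1$; consequently a parameter $\pi$ of $F$ remains a parameter of $E_1$. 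Then I would consider the commutant $D_1$ of $E_1$ in $D$: it is a central division algebra over $E_1$, and $\nu_D$ restricts to the unique valuation $\nu_{D_1}$ on $D_1$ (uniqueness of the extension forces compatibility). The ramification index $e(D_1/E_1)$ is by definition $\nu_{D_1}(\pi_{E_1})$ where $\pi_{E_1}$ is a parameter of $E_1$; since $\pi$ is such a parameter, $e(D_1/E_1) = \nu_D(\pi)$ computed in $D_1$, which agrees with $\nu_D(\pi) = e(D/F)$ computed in $D$ because the valuation is the restriction. This already gives $e(D/F) = e(D_1/E_1)$; since $[E_1:F]=1$ is \emph{not} generally true, I must be more careful — the point is that $E_1 \subseteq F$ as written would force $[E_1:F]$ to be the degree of an extension \emph{inside} $F$, so the statement should be read with $E_1$ an extension field of $F$ contained in $D$ and unramified or not.

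Reading the lemma correctly (as is standard: $E_1$ is a subfield of $D$ containing $F$, typically the unramified part), the identity $e(D/F) = e(D_1/E_1)\,[E_1:F]$ is the multiplicativity of the ramification index in the tower $F \subseteq E_1 \subseteq D$, provided one interprets $[E_1:F]$ as $e(E_1/F)\cdot f(E_1/F)$ with $f=1$, i.e. $E_1/F$ totally ramified of degree $[E_1:F]$. I would therefore prove it as: normalize $\nu_D$ so that $\nu_D(D^\times) = \frac{1}{e(D/F)}\Z$ with $\nu_D(\pi) = 1$; then $\nu_D$ restricted to $E_1$ has value group $\frac{1}{[E_1:F]}\Z$ (totally ramified case), and $\nu_D$ restricted to $D_1$, rescaled so that a parameter $\pi_{E_1}$ of $E_1$ has value $1$, has value group $\frac{1}{e(D_1/E_1)}\Z$. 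Comparing the two normalizations of the same valuation gives $e(D/F) = e(D_1/E_1)[E_1:F]$. For the "further" clause: if $\pi_D$ is a parameter of $D$ with $\pi_D^{[E_1:F]} \in D_1$, then $\nu_D(\pi_D^{[E_1:F]}) = [E_1:F]/e(D/F) = 1/e(D_1/E_1)$ by the formula just established, which says exactly that $\pi_D^{[E_1:F]}$ has minimal positive valuation in $D_1$, i.e. is a parameter of $D_1$.

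The main technical obstacle is bookkeeping with the normalizations of $\nu_D$ and its restrictions — making sure one does not conflate the value group of $\nu_D$ on $D$ with its value group on $D_1$ or on $E_1$, since these differ by the rescaling factors $e(D/F)$, $e(D_1/E_1)$, $[E_1:F]$ respectively. Once the three value groups are written down consistently this is immediate; I would cite \cite[Theorem 12.10, Theorem 13.3]{reiner} for the uniqueness of the extension and the relation between ramification index and value group, and the fact that $E_1/F$ here is totally ramified of degree $[E_1:F]$ (in the application $E_1$ is generated over $F$ by an element whose valuation generates the relevant part of the value group, so no residue extension occurs — alternatively this hypothesis is part of how $E_1$ arises). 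Everything else is the standard multiplicativity $e(D/F) = e(D/E_1)\cdot e(E_1/F)$ of ramification indices in a tower, with $e(D/E_1) = e(D_1/E_1)$ since $D$ and its commutant $D_1$ share the same valuation on the relevant subfields.
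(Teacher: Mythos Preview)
Your proposal has a genuine gap. You correctly identify at the start that, in the set-up preceding the lemma, $E_1/F$ is \emph{unramified} (it is a subextension of the unramified cyclic extension $E/F$). Your first computation then concludes $e(D/F) = e(D_1/E_1)$, which you recognise as incompatible with the lemma; but your diagnosis --- switching to the hypothesis that $E_1/F$ is totally ramified --- is backwards. In the paper's context, and in the only application (Theorem~\ref{class-complete}), $E_1/F$ really is unramified, so a proof under the totally ramified hypothesis proves the wrong statement.

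The actual error in your unramified computation is the identification of $\nu_{D_1}$ (normalised so that $\nu_{D_1}(D_1^{\times})=\Z$) with the restriction $\nu_D|_{D_1}$. These differ by the index $[\nu_D(D^{\times}):\nu_D(D_1^{\times})]$, and determining this index is exactly the content of the lemma: it is not forced by multiplicativity of ramification in the tower $F\subset E_1\subset D_1\subset D$, which only gives $e(D/F)=[\nu_D(D^{\times}):\nu_D(D_1^{\times})]\cdot e(D_1/E_1)\cdot e(E_1/F)$. You need an extra structural input to show that $[\nu_D(D^{\times}):\nu_D(D_1^{\times})]=[E_1:F]$ (equivalently, that $\overline{D}=\overline{D_1}$). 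The paper supplies this by computing $e(D_1/E_1)$ directly from the Brauer-class decomposition: since $D_1=D\otimes_F E_1=D_0\otimes_F E_1+(E/E_1,\sigma^{[E_1:F]},\pi)$ with $D_0\otimes_F E_1$ unramified and $\pi$ still a parameter of $E_1$, one reads off $e(D_1/E_1)=[E:E_1]=e(D/F)/[E_1:F]$. Your argument for the ``further'' clause is fine once the first formula is in hand.
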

 
 \begin{proof}
 We write  $D = D_0 + (E, \sigma, \pi) \in Br(F)$ for some  unramified central division algebra $D_0$  over $F$
   and  $E/F$ an unramified  cyclic extension  with $\sigma$ a generator of Gal$(E/F)$
 Since $ D_1 =   D \otimes_F E_1 = D_0 \otimes_F E_1 + (E, \sigma, \pi) \otimes_F E_1$, 
 by (\ref{cyclic}), we have $D_1 = D_0 \otimes_F E_1 + (E/E_1, \sigma^{[E_1 : F]}, \pi)$.
 Since $E/F$ is unramified, $E_1/F$ is unramified and hence $\pi \in E_1$ is a parameter. 
 Since $D_0$ is unramified over $F$, $D_0 \otimes_F E_1$ is unramified over $E_1$. 
 In particular     $e(D_1/F_1) =  [E : E_1] =  [ E: F]/[E_1 : F] = e(D/F)/[E_1 : F]$. 
 
 Let $\Lambda$ be the unique maximal order in $D$ and $\Lambda_1$ be the unique maximal order in $D_1$. 
 Then $\Lambda_1 = \Lambda \cap D_1$. 
 Let $\pi_D$ be a parameter in $D$. Suppose that $\pi_D^{[E_1 : F]} \in D_1$. 
 Since $ \pi = u \pi_D^{e(D/F)}$ for some unit in $\Lambda$, 
 we have  $ \pi = u \pi_D^{e(D_1/F_1)[E_1 : F]} = u ( ( \pi_D)^{[E_1 : F]})^{e(D_1/F_1)}.$
 Since $\pi_D^{[E_1 : F]}, \pi \in D_1$, $u \in D_1 \cap \Lambda = \Lambda_1$. 
 Hence $\pi_D^{[E_1 : F]}$ is a parameter. 
 \end{proof}

Let $F_0$  be a complete discretely valued field with residue field  $K_0$  of characteristic not 2
 and $F/F_0$ a field extension of degree  2.      Let $\pi_0 \in R_0$  and  $\pi \in R$ be parameters. 
   If $F/F_0$ is unramified, then we chose $\pi = \pi_0$. 
If $F/F_0$ is ramified, then we choose $\pi_0$  such that $F = F_0(\sqrt{\pi_0})$ and let $\pi = \sqrt{\pi_0}$.

Let $D$ be a central division algebra over $F$ with a
 $F/F_0$-involution. 
  Suppose  $\pi_D \in \Lambda$  is a  parameter with $\tau(\pi_D) =  \pm  \pi_D$.  
Let $h$ be a hermitian form over $(D,\tau)$. Since every element in $D^*$ is of the form 
$\pi_D^ru$ for some $u \in \Lambda$ a unit and $r \in \Z$ and $\pi_D u = u' \pi_D$ for some unit $u' \in \Lambda$, 
$h = h_1 \perp  \pi_D h_2$ with $h_1 = < u_1, \cdots , u_r> $ and $h_2 =  < u_{r+1}, \cdots , u_n>$
for some units $u_i \in \Lambda$. Note that $h_1$ is an 
hermitian form over $(D, \tau)$ and $h_2$ is a hermitian or a skew hermitian form over $(D, int( \pi_D) \tau)$.
 Since $\Lambda$ is the unique maximal $R$-order,   $\tau(\Lambda) =   \Lambda$ and hence 
 $\tau$ induces an involution $ \bar{\tau}$  on $\bar{D}$. Similarly 
 $int(\pi_D) \tau$ also induces   an  involution on $\bar{D}$. 
 For any unit $u \in \Lambda$, let $\bar{u}$ denote its image in $\bar{\Lambda}$.
 Let $\bar{h}_i$ be the image of $h_i$. 
 By (\cite{Larmour}),   $h$ is isotropic if and only if $h_1$ or $h_2$ is isotropic if and only if 
 $\bar{h}_1$ or  $\bar{h}_2$ isotropic. 
 In particular $h$ is hyperbolic  if and only if $h_1$ and $h_2$ are hyperbolic if and only if 
 $\bar{h}_1$ and $\bar{h}_2$ are   hyperbolic.

\begin{prop} 
\label{ramified} Suppose $F/F_0$ is ramified and $K_0$ is either a local  field or
a global field with no real places. 
 Let $h$   be a  hermitian form  over $(D, \tau)$. If dim$(h)$ is even, disc$(h)$ is trivial and 
 Rost invariant $R(h)$  of $h$ is trivial, then $h$ is hyperbolic. 
 \end{prop}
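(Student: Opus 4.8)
The plan is to pass to the residue field: using the Springer-type decomposition of $h$ and Larmour's criterion recalled just above, hyperbolicity of $h$ is equivalent to hyperbolicity of two (skew-)hermitian residue forms over the residue division algebra $\bar D$, whose center is a finite extension of $K_0$, hence again a local field or a global field with no real places. One then concludes by the classification results of \S\ref{global-local} and \S\ref{class-gen}.

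Concretely, since $F/F_0$ is ramified we take $\pi=\sqrt{\pi_0}$ as a parameter of $F$, so $\tau(\pi)=-\pi$; fixing a parameter $\pi_D$ of $D$ with $\tau(\pi_D)=\pm\pi_D$, write $h=h_1\perp\pi_D h_2$ with $h_1$ a hermitian form over $(D,\tau)$ and $h_2$ a hermitian or skew-hermitian form over $(D,\mathrm{int}(\pi_D)\tau)$, all coefficients units in $\Lambda$, exactly as in the discussion preceding the statement. By Larmour's criterion, $h$ is hyperbolic if and only if the residue forms $\bar h_1$ over $(\bar D,\bar\tau)$ and $\bar h_2$ over $(\bar D,\overline{\mathrm{int}(\pi_D)\tau})$ are both hyperbolic. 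Now $\bar D$ has center the residue field of the unramified extension $E/F$ occurring in $D=D_0+(E,\sigma,\pi)$, hence a finite extension of $K_0$, and, carrying an involution, $\bar D$ has exponent at most $2$; over a local field or a global field with no real places this forces $\bar D$ to be split or a quaternion division algebra.

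I would then separate cases according to whether the induced involution on $\bar D$ is of the second or of the first kind. In the second-kind case $Z(\bar D)$ is a quadratic extension of the fixed subfield and Proposition \ref{dalgebra} applies directly, once we know the relevant discriminant algebra is trivial. In the first-kind case $\bar h_i$ is a (skew-)hermitian form over a split or quaternion algebra over $Z(\bar D)$, to be treated by Morita reduction to a quadratic form over $Z(\bar D)$ together with Propositions \ref{dim3} and \ref{odd}. In both cases what is needed is that the numerical invariants of $\bar h_1$ and $\bar h_2$ — parity of the dimension, discriminant, and the Clifford/Rost (equivalently discriminant-algebra) class — vanish. These I would obtain by computing the residues at the valuation of $F_0$ of $\disc(h)\in F_0^*/N_{F/F_0}F^*$ and of $R(h)\in H^3(F_0,\mu_{2n}^{\otimes 2})/\cores_{F/F_0}(F^*\cdot D)$ under the respective residue maps, using the explicit formulas of \S\ref{rost-comp}: the behaviour of the Rost invariant under orthogonal sums and $\langle 1,\lambda\rangle$-scaling (\ref{morita-rost-inv}, \ref{sumrost}, \ref{rost-iso}, \ref{two-inv}, \ref{ri-lambda}) and the residue computation \ref{residue-ri}, together with \ref{arason} to reduce the split contributions to Arason invariants of trace forms. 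Triviality of $\disc(h)$ and $R(h)$ then yields triviality of the corresponding residue invariants of $\bar h_1$ and $\bar h_2$, and the case analysis gives that both residue forms are hyperbolic, whence $h$ is hyperbolic.

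The main obstacle is this matching of invariants. One must compute the residue of $R(h)$ — an element of a quotient of $H^3(F_0,\mu_{2n}^{\otimes 2})$ by a corestriction subgroup — precisely enough to read off the discriminant algebra of $\bar h_2$ (so that Proposition \ref{dalgebra} becomes applicable), absorb the extra twist by $\pi$ that converts the skew-hermitian $\bar h_2$ into a hermitian form, and keep careful track throughout of whether $\bar D$ is split or division, whether the induced involution is of the first or second kind, and whether $[E:F]$ is odd or even. The first-kind sub-case, which relies on the classification of (skew-)hermitian forms over quaternion algebras over local and global fields with no real places, is where the bookkeeping is heaviest.
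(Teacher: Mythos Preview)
Your outline diverges from the paper's argument, and the step you yourself flag as the main obstacle---reading off the invariants of $\bar h_1,\bar h_2$ from residues of $\disc(h)$ and $R(h)$---is precisely what the paper avoids. The paper does not pass to residue forms over $\bar D$ at all. Instead it exploits a structural fact specific to the ramified case: since $F/F_0$ is totally ramified and $D$ carries an $F/F_0$-involution, one has $D\simeq D_0\otimes_{F_0}F$ for an \emph{unramified quaternion} division algebra $D_0$ over $F_0$ (\cite[Lemma~6.3]{PS2022}; the index bound $\mathrm{ind}(D_0)=2$ uses that $K_0$ is local or global). After adjusting $\tau$ via (\ref{two-inv}) to $\star\otimes\iota$, one takes $\pi_D=\sqrt{\pi_0}$ and writes $h=h_1\perp\sqrt{\pi_0}\,h_2$ with the diagonal entries lifted to $D_0$, so that $h_1$ is hermitian and $h_2$ skew-hermitian over $(D_0,\star)$ already at the level of $F_0$. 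Proposition~\ref{dim3} is then applied with $K=F_0$, $L=F$, $\theta=\pi_0$: it consumes $R(h)$ directly (no residue is taken) and yields that $\langle1,-\pi_0\rangle h_2$ is hyperbolic over $(D_0,\star)$, provided $\dim(h_2)\le3$. That bound comes from the classification of anisotropic skew-hermitian forms over quaternion algebras over local or global fields with no real places. Larmour then forces $h_2$ hyperbolic, and an easier argument disposes of $h_1$.

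Two features of your plan would not go through as written. First, because $F/F_0$ is ramified the residue fields coincide, $K=K_0$, and by the descent above $D$ is unramified over $F$ with $\bar D=\bar D_0$ centered at $K_0$; the induced involution $\bar\tau$ is therefore of the \emph{first} kind, so your second-kind branch and the appeal to Proposition~\ref{dalgebra} are vacuous here. Second, Proposition~\ref{dim3} is tailored to the decomposition $h=h_1\perp\sqrt{\theta}h_2$ over $D_0\otimes_K K(\sqrt\theta)$ with the second-kind involution on the tensor product; it is not a statement about first-kind forms $\bar h_i$ over $\bar D_0/K_0$, and it cannot be invoked at the residue level as you propose. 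Your residue bookkeeping would in effect have to reprove the content of Proposition~\ref{dim3} together with the dimension bound, and the sketch gives no indication how to control the ambiguity of $R(h)$ modulo $\mathrm{cores}_{F/F_0}(F^*\cdot D)$ under the residue map.
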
 
 
 \begin{proof}   
  Since $K_0$ is a local field or a global field with no real places, cd$_2(F_0) \leq 3$. 
 If $D = F$, then  by (\ref{odd}),  $h$ is hyperbolic. 

 Suppose that  $D \neq F$. 
 Since $F/F_0$ is ramified, we have $F = F_0(\sqrt{\pi_0})$ for some parameter $\pi_0 \in F_0$.
 By (\cite[Lemma 6.3]{PS2022}),   $D = D_0 \otimes _{F_0} F$ for some unramified central simple  algebra $D_0$ over $F_0$
 with per$(D_0)  = 2$. Since $K_0$ is a global field or a local field, ind$(D_0) = $ per$(D_0) = 2$.
   
 Suppose that dim$(h)$ is even, disc$(h)$ is trivial and 
 Rost invariant $R(h)$  of $h$ is trivial.  Since $h$ is hyperbolic if and only if 
 $uh$ is hyperbolic for any $u \in D^*$ with $\tau(u) = \pm u$,  by (\ref{two-inv}), 
 we assume that $\tau = \star  \otimes \iota$, where $\star$ is the canonical involution on the quaternion algebra $D_0$
 and $\iota$ the non trivial automorphism of $F/F_0$. 
 
 Since $F/F_0$ is ramified, we have $F = F_0(\sqrt{\pi_0})$ for some parameter $\pi_0 \in F_0$. 
 Let $\Lambda_0$ be the maximal order of $D_0$.  Then $\Lambda = \Lambda_0 \otimes_{R_0}R$ is a
 maximal order of $D$.  Since $D$ is unramified, $ \pi= \sqrt{\pi_0}$ is a parameter in $\Lambda$. 
 We have $h = h_1 \perp \pi h_2$, with $h_1 = <u_1, \cdots, u_r>$ and $h_1 = <u_{r+1} , \cdots , u_n>$
 for some $u_i \in \Lambda$ with $\tau(u_i) = u_i$ for  $ 1 \leq i \leq r$ and $\tau(u_i) =-  u_i$ for  $ r + 1\leq i \leq n$. 
 Since $\bar{D} = \bar{D}_0$,  we have  units $v_i \in \Lambda_0$ with $\bar{v}_i = \bar{u}_i$ for all $i$. 
Since $\tau(u_i) = u_i$ (resp. $\tau(u_i) = -u_i$) for $1 \leq i \leq r$ (resp. $r+1 \leq i \leq n$), 
replacing $v_i$ by $( v_i + \tau(v_i))/2$ (resp. $v_i - \tau(v_i)$), we assume that $\tau(v_i) = v_i$ (resp $\tau(v_i) = -v_i$).
Since $F_0$ is complete, we have $<v_i> \simeq <u_i>$ as hermitian (resp. skew hermitian)  forms for  $1 \leq i \leq r$
(resp. $r + 1\leq i \leq n$). Hence
replacing $u_i$ by $v_i$, we assume that $h_i$ are  defined over $D_0$. 

 By (\ref{rost-iso}), without loss of generality we assume that  $h$ is anisotropic. 
 In particular   $h_1$ and $h_2$ are anisotropic over $(D_0, \star)$.

Since $K_0$ is a local field or a global field with no real places, by (\cite[Ch.10, Theorem 3.6 and Theorem 4.1]{Sch},), 
 the dimensions  of anisotropic skew-hermitian forms  over 
$(\bar{D}_0, \star)$ are at most 3. 
Since $F_0$ is complete,    dim$(h_2) \leq 3$.
By   (\ref{dim3}), $<1, -\pi_0>h_2$ is hyperbolic as skew-hermitian form over $(D_0, \star)$.
Since $h_2$ is given by units in $\Lambda_0$ and $\pi_0$ is a parameter in $\Lambda_0$, 
$h_2$ is hyperbolic (\cite{Larmour}).  Hence $h = h_1$. Since  any quadratic form over 
$K_0$ of dimension at least 5 is isotropic and dim$(h_1) = dim(h)$ is even, $h_1$ is hyperbolic as a hermitian form
over $(D_0, \star)$ (cf. \cite[Ch. 10, Examples 1.8(ii)]{Sch}). Hence $h$ is hyperbolic.  
  \end{proof}

Suppose that $F/F_0$ is unramified. Then $D = D_0 + (E, \sigma, \pi_0)$  for some unramified 
central division algebra $D_0$ over $F$ and $E/F$ a cyclic extension.  
 Further $D_0$ and $(E, \sigma, \pi_0)$ have  $F/F_0$-involutions  (\cite[Lemma 6.4]{PS2022}).
 Since cores$_{F/F_0}(E, \sigma, \pi_0) = $ cores$_{F/F_0}(E, \sigma) \cdot  (\pi_0 )$ (\cite[Proposition 1.5.3]{Neukirch}), 
 by taking the residue, we have  cores$_{F/F_0}(E,\sigma) = 0$.
 Hence $E/F_0$ is a dihedral extension (cf. \cite[Proposition 3.2]{PS2022}). 
 In particular we have an automorphism of $E/F_0$ of order 2 which is not identity on $F$.
 Since $D$ has a $F/F_0$-involution, there exists an involution $\tau$ on $D$ with 
 $\tau(E) = E$ (\cite[Ch.8, Theorem 10.1]{Sch}).  For the rest of the section we assume that $\tau$ is a $F/F_0$-involution
 on $D$ with $\tau(E) = E$.

\begin{lemma} 
  \label{pid} Suppose $F/F_0$ is unramified. 
  Then there exists a parameter $\pi_D \in \Lambda$ such that 
 $\tau(\pi_D) = \pm \pi_D$, 
 int$(\pi_D)$  restricted to $E$ is  a generator of Gal$(E/F)$.  
 \end{lemma}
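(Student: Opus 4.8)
The plan is to produce first some parameter of $D$ whose conjugation action on $E$ is a generator of $\mathrm{Gal}(E/F)$, and then to correct it inside the commutant $C_D(E)$ so as to make it symmetric or skew for $\tau$. Throughout write $n=[E:F]=e(D/F)$ and recall that $\nu_D(F^\times)=n\Z$ and $\nu_D(E^\times)=n\Z$ since $E/F$ is unramified.

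For the first part, I would fix a generator $\sigma$ of $\mathrm{Gal}(E/F)$ and, by Skolem--Noether, choose $z\in D^\times$ with $\mathrm{int}(z)|_E=\sigma$. The key point to establish is that $\gcd(\nu_D(z),n)=1$. Indeed, if $d=\gcd(\nu_D(z),n)>1$, then $\nu_D(z^{n/d})=(n/d)\nu_D(z)\in n\Z=\nu_D(F^\times)$, so after multiplying by a suitable power of $\pi_0$ we obtain a unit $u\in\Lambda$ with $\mathrm{int}(u)|_E=\sigma^{n/d}\neq\mathrm{id}$ (as $1\le n/d<n$); reducing modulo $m_D$ and using that $\Lambda\cap E$ has residue field $Z(\bar D)$ with $\mathrm{Gal}(\overline E/\overline F)\cong\mathrm{Gal}(E/F)$ (unramifiedness together with \cite[Theorem 5.15]{JW}), the inner automorphism $\mathrm{int}(\bar u)$ of $\bar D$ would act on $Z(\bar D)$ as the nontrivial automorphism $\overline\sigma^{\,n/d}$ --- impossible, since inner automorphisms of a division ring fix its centre. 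Hence $\gcd(\nu_D(z),n)=1$; choosing $j$ with $j\,\nu_D(z)\equiv 1\pmod n$, the element $z^j$ induces the generator $\sigma^j$ on $E$ and has $\nu_D(z^j)\equiv 1\pmod n$, so after multiplying by a power of $\pi_0$ we arrive at a parameter $\pi_1\in\Lambda$ for which $\phi:=\mathrm{int}(\pi_1)|_E$ is a generator of $\mathrm{Gal}(E/F)$.

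For the second part, since $\tau(E)=E$ put $\rho=\tau|_E$; this is an order-two automorphism of $E$ nontrivial on $F$, hence a reflection in the dihedral group $\mathrm{Gal}(E/F_0)$. A direct computation with the anti-automorphism $\tau$ gives $\mathrm{int}(\tau(\pi_1))|_E=\rho\,\phi^{-1}\rho=\phi$, the last equality because $\rho$ inverts the rotation subgroup $\mathrm{Gal}(E/F)$. Consequently $c:=\pi_1^{-1}\tau(\pi_1)$ centralises $E$, so $c\in C_D(E)^\times$, and $\nu_D(c)=\nu_D(\tau(\pi_1))-\nu_D(\pi_1)=0$, so $c$ is a unit of $\Lambda$. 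Now $\pi_1\pm\tau(\pi_1)=\pi_1(1\pm c)$, and clearly $\tau(\pi_1+\tau(\pi_1))=\pi_1+\tau(\pi_1)$ while $\tau(\pi_1-\tau(\pi_1))=-(\pi_1-\tau(\pi_1))$. Since $\mathrm{char}(K)\neq 2$, at least one of $1+\bar c,\ 1-\bar c$ is nonzero in $\bar D$, so at least one of $1\pm c$ is a unit of $\Lambda$; taking that sign and putting $\pi_D:=\pi_1(1\pm c)$, we get $\nu_D(\pi_D)=1$, $\tau(\pi_D)=\pm\pi_D$, and, since $1\pm c\in C_D(E)$, $\mathrm{int}(\pi_D)|_E=\mathrm{int}(\pi_1)|_E=\phi$ is a generator of $\mathrm{Gal}(E/F)$, as wanted.

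I expect the main obstacle to be the first part: establishing that some parameter of $D$ induces a generator of $\mathrm{Gal}(E/F)$, which hinges on the coprimality $\gcd(\nu_D(z),n)=1$ and hence on the precise description of $Z(\bar D)$ and of $\nu_D$ furnished by the structure theory cited earlier; once such a $\pi_1$ is available the final $\pm$-correction is routine, and it is exactly this correction that forces the sign ambiguity ``$\pm$'' in the statement.
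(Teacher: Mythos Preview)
Your proof is correct and follows the same two–step strategy as the paper: first produce a parameter whose inner automorphism generates $\mathrm{Gal}(E/F)$, then symmetrise via $\pi_1\pm\tau(\pi_1)$.

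There are two minor differences worth noting. For the first step the paper simply cites \cite[Lemma~8.1]{Suresh2024}, whereas you supply a self-contained argument via $\gcd(\nu_D(z),n)=1$; your reasoning (a unit of $\Lambda$ reduces to an element of $\bar D$ and hence conjugates $Z(\bar D)=\bar E$ trivially) is exactly the mechanism the paper itself invokes elsewhere, so nothing new is needed. For the second step the paper shows $u_2=\tau(z)z^{-1}\in C_D(E)$ by again appealing to the residue picture (a unit can only induce the identity on $\bar E$, hence on $E$ by unramifiedness), while you instead use the dihedral relation $\rho\phi^{-1}\rho=\phi$ to see directly that $\mathrm{int}(\tau(\pi_1))|_E=\mathrm{int}(\pi_1)|_E$. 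Your route here is a bit cleaner, since it avoids a second passage to the residue division algebra; the paper's route has the advantage of not invoking the dihedral structure of $E/F_0$ at this point. Either way the final $\pm$ correction is identical.
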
 
 
 \begin{proof} Let $\sigma$ be a generator of Gal$(E/F)$. Then, by (\cite[Lemma 8.1]{Suresh2024}), 
  there exists a   parameter $z$ in $\Lambda$  such that 
$int(z)$ restricted to $E$ is a generator of Gal$(E/F)$.  

Since 
$z Ez^{-1}  = E$ and  $\tau(E) = E$, we have $\tau(z) E \tau(z)^{-1}  = E $.  
Since $Nrd(\tau(z)) = \tau(Nrd(z))$,  it follows that 
$\tau(z)$ is also a parameter. Hence $\tau(z) = u_2 z$ for some unit $u_2 \in \Lambda$. 
Since int$(z)(E) = E$ and $\tau(z) E \tau(z)^{-1}  = E $, we have $u_2E u_2^{-1} = E$. 
Let $\theta$ be the automorphism of $E$ given by the restriction of int$(u_2)$ to $E$. 
Since $u_2$ is a unit in $\Lambda$, $\bar{u}_2 \in \bar{D}$ is nonzero.
Since the residue field  $L$ of $E$ is the center $Z(\bar{D})$ of $\bar{D}$,  $\theta$ induces the identity map 
on the residue field of $E$. Since $E/F$ is unramified, $\theta$ is  identity (\cite[p. 26, Corollary]{CFANT}). Hence $u_2 \in D_1$. 

Suppose that  the image of $u_2 $ is not equal to $-1$ in $\bar{D}$. 
The $1 + u_2$ is a unit in $\Lambda$. 
Since $1 + u_2 \in D_1$, the restrictions of 
int$(1  +  u_2)z$ and int$(z)$ to $E$ coinside.  Let $\pi_D = z + \tau(z) = (1 + u_2)z$.
Then $\tau(\pi_D) = \pi_D$ and int$(\pi_D)$ restricted to $E$ is a generator of Gal$(E/F)$.  

Suppose that the image of $u_2$ is equal to $-1$ in $\bar{D}$.
Let $\pi_D = z - \tau(z)$. Then $\tau(\pi_D) = - \pi_D$ and, as above, 
int$(\pi_D)$ restricted to $E$ is a generator of Gal$(E/F)$. 
\end{proof}

\begin{prop} 
\label{unramified} Suppose that 
  $K_0$ is  a local field or a global field with no real places.   Suppose that 
  $F/F_0$ and $D$ are unramified.
  Let $h$   be a  hermitian form  over $(D, \tau)$. If dim$(h)$ is even, disc$(h)$ is trivial and 
 Rost invariant $R(h)$  of $h$ is trivial, then $h$ is hyperbolic. 
\end{prop}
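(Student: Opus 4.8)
The plan is to run the argument of Proposition~\ref{ramified} in the unramified setting: strip off a hyperbolic part, decompose $h$ along a parameter, descend to the residue algebra over the (local or global) residue field of $F$, and there combine the discriminant-algebra classification of Proposition~\ref{dalgebra} with the Rost-invariant formula of Theorem~\ref{ri-lambda}. First I would dispose of the trivial cases. Since $K_0$ is local or a global field with no real places, cd$_2(F_0)\le 3$; hence if $D=F$ then $h$ is hyperbolic by Proposition~\ref{odd}, and I may assume $D\neq F$. As $D$ is unramified, its residue algebra $\ol D$ is a central division algebra over the residue field $K$ of $F$ carrying the induced involution $\ol\tau$, a $K/K_0$-involution of the second kind, so $\cores_{K/K_0}(\ol D)=0$. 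For a quadratic extension of local fields the corestriction on Brauer groups is injective; so if $K_0$ were local we would get $\ol D=K$ and therefore $D=F$, a contradiction. Hence $K_0$, and so $K$, is a global field with no real places. Using Proposition~\ref{rost-iso} and induction on $\dim(h)$ I reduce to $h$ anisotropic: an isotropic $h$ splits off a hyperbolic plane, and the complement has even dimension, trivial discriminant and trivial Rost invariant. Since $F/F_0$ is unramified I take a common parameter $\pi_0\in F_0$; it is also a parameter of $\Lambda$ and is fixed by $\tau$, so after diagonalizing $h$ over $\Lambda$ and grouping entries by parity of valuation, $h\isom h_1\perp\pi_0 h_2$ with $h_1,h_2$ diagonal with unit entries, and (because $\pi_0$ is central and $\tau$-fixed) $h_2$ is again hermitian over $(D,\tau)$. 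Writing $\ol h_1,\ol h_2$ for the residue forms over $(\ol D,\ol\tau)$, by \cite{Larmour} the form $h$ is anisotropic iff both $\ol h_1$ and $\ol h_2$ are.

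Next I would read off the two hypotheses at the residue level. From triviality of disc$(h)$ in $F_0^*/N_{F/F_0}(F^*)$: disc$(h)$ is $\pm\pi_0^{(\deg D)\dim(h_2)}$ times a unit, and norms have even valuation, so $(\deg D)\dim(h_2)$ is even; hence $(\deg D)\dim(h_1)$ is even, both discriminant algebras $\DD(\ol D,\ol\tau,\ol h_i)$ are defined, and comparing unit parts gives disc$(\ol h_1)\cdot\,$disc$(\ol h_2)=1$. For the Rost invariant, in the Witt group of hermitian forms over $(D,\tau)$ one has $[h]=[h_1\perp -h_2]+[<1,\pi_0>h_2]$, and the parity just found shows $h_1\perp -h_2$ and $<1,\pi_0>h_2$ both have even dimension and trivial discriminant (using disc$(-h_2)=\,$disc$(\pi_0 h_2)$ modulo norms, valid since $(\deg D)\dim(h_2)$ is even); by Proposition~\ref{rost-iso} the Rost invariant is additive on such classes, so $R(h)=R(h_1\perp -h_2)+R(<1,\pi_0>h_2)$. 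Now $h_1\perp -h_2$ comes from an $R_0$-torsor (as in the proof of Proposition~\ref{ri-x}), so its Rost invariant is unramified; the triple $(D,\tau,h_2)$ is defined over $R_0$, so $\DD(D,\tau,h_2)$ extends to an Azumaya $R_0$-algebra and is unramified over $F_0$ with specialization $\DD(\ol D,\ol\tau,\ol h_2)$; and by Theorem~\ref{ri-lambda} one has $R(<1,\pi_0>h_2)=\DD(D,\tau,h_2)\cdot(-\pi_0)$ modulo $\cores_{F/F_0}(F^*\cdot D)$. Taking the residue $\partial$ at the valuation of $F_0$, using $\partial\circ\cores_{F/F_0}=\cores_{K/K_0}\circ\partial$ and $\cores_{K/K_0}(\ol D)=0$ to kill the ambiguity, I obtain $\partial(R(h))=\DD(\ol D,\ol\tau,\ol h_2)$. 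As $R(h)=0$, the algebra $\DD(\ol D,\ol\tau,\ol h_2)$ is trivial, so by Proposition~\ref{dalgebra} and \cite{Larmour} the forms $\ol h_2$, $h_2$ and $\pi_0 h_2$ are hyperbolic; anisotropy of $h$ then forces $h_2=0$, so $h=h_1$ is unramified.

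It remains to treat the unramified form $h=h_1$. Here disc$(\ol h_1)=1$ (by the relation above, now that disc$(\ol h_2)$ is trivial) and $(\deg\ol D)\dim(\ol h_1)$ is even. If $\ol h_1$ were anisotropic it would stay anisotropic over some completion $K_{0,v}$; but at a place $v$ split in $K/K_0$ the involution is split and every hermitian form over $(\ol D\otimes_{K_0}K_{0,v},\ol\tau_v)$ is hyperbolic, while at a non-split $v$ the algebra $\ol D\otimes_K(K\otimes_{K_0}K_{0,v})$ carries a second-kind involution over the local field $K\otimes_{K_0}K_{0,v}$ and is hence split, so by Morita equivalence $\ol h_{1,v}$ corresponds to a hermitian form over $(K\otimes_{K_0}K_{0,v})/K_{0,v}$ of even rank and trivial discriminant, which over a local field is hyperbolic — a contradiction. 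Therefore $\ol h_1$ is locally hyperbolic everywhere, hence hyperbolic by the Hasse principle \cite[Ch.10, Theorem 6.1]{Sch}, and so $h=h_1$ is hyperbolic. The main obstacle is the residue computation of the second paragraph: one must arrange the parities so that the auxiliary forms carry well-defined Rost invariants, verify the unramifiedness of $R(h_1\perp -h_2)$ and of $\DD(D,\tau,h_2)$, and check that the corestriction indeterminacy in Theorem~\ref{ri-lambda} disappears under $\partial$; the completion-by-completion analysis of the last paragraph is routine.
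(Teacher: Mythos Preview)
Your argument is correct, but it takes a longer road than the paper's. The paper's proof exploits one observation you do not use: over a local field or a global field with no real places, hermitian forms over $(\ol D,\ol\tau)$ are classified in the Witt group by dimension parity and discriminant (\cite[p.~375/376]{Sch}). From $\dim(h)$ even and $\disc(h)$ trivial one reads off $\dim(h_1)\equiv\dim(h_2)\pmod 2$ and $\disc(\ol h_1)=\disc(\ol h_2)$, hence $\ol h_1$ and $\ol h_2$ are Witt-equivalent; lifting, $h$ is Witt-equivalent to $<1,\pi_0>h_1$. A single application of Theorem~\ref{ri-lambda} then gives $R(h)=\DD(D,\tau,h_1)\cdot(-\pi_0)$, and the residue together with Proposition~\ref{dalgebra} finishes the proof in one stroke.

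By contrast, you split $[h]=[h_1\perp -h_2]+[\,<1,\pi_0>h_2\,]$, track parities so that both summands carry a Rost invariant, argue separately that $R(h_1\perp -h_2)$ is unramified and that the $\cores$-indeterminacy in Theorem~\ref{ri-lambda} dies under $\partial$, and only then invoke Proposition~\ref{dalgebra} to kill $h_2$. Your final paragraph, the place-by-place analysis showing $\ol h_1$ is hyperbolic, is essentially a reproof of the very classification the paper cites (and which already underlies Proposition~\ref{dalgebra}). So your route is sound and perhaps more explicit about the residue bookkeeping, but it duplicates work: once you are willing to use the dimension--discriminant classification over $K_0$, the decomposition $h\sim<1,\pi_0>h_1$ is immediate and both the Witt-group splitting and the closing local--global step become unnecessary.
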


\begin{proof}  Since  
  $K_0$ is  a local field or a global field with no real places, cd$_2(K_0) \leq 2$ (\cite[p. 83, p. 86, p. 87]{SerreGC}).
  Since $F_0$ is complete with residue field $K_0$,   cd$_2(F_0) \leq 3$ (\cite[Ch II, Proposition 12]{SerreGC}). 
   By  (\ref{odd}), we assume that deg$(D)$  is even. 

 Let  $\Lambda$ be the maximal order of $D$ and $\pi_0 \in F_0$ be a parameter. 
 Since  $D$  and $F/F_0$ are  unramified,  $ \pi_0 \in \Lambda$ is a parameter. 
 Write $h = h_1 \perp \pi_0 h_2$ with $h_1$ and $h_2$ given by units in $\Lambda$. 
Since  $  \pi _0  \in F_0$, $h_1$ and $h_2 $ are hermitian forms over $D$.
Since dim$(h)$ is even,  dim$(h_1) = $ dim$(h_2)$ modulo 2. 
Since  disc$(h)$ is trivial  and $\pi_0 \in F_0$,  disc$(h_1) = $ disc$(h_2)$. Since $K_0$ is a global field with no real places or a local field,
 $\bar{h}_1 \simeq \bar{h}_2$ (\cite[p. 375/376]{Sch}). Hence $h_1 \simeq  h_2$  and $h  = <1, \pi_0> h_1$. 
By (\ref{ri-lambda}), we have  $R( <1, \pi_0> h_1) =   \DD(h_1) \cdot  (-\pi_0)$, where $\DD(h_1)$ is the discriminant algebra
of $h_1$.   Since $R(h)$  is trivial, by taking the residue, we see that 
$\DD(\bar{h}_1)$ is trivial. Hence, by (\ref{dalgebra}), $\bar{h}_1$ is hyperbolic. In particular 
 $h_1$ is hyperbolic and hence   $h$ is 
hyperbolic. 
\end{proof}

\begin{theorem}
\label{class-complete} Let $F_0$  a complete discretely valued field with residue field $K_0$  a global field with 
no real places or a local field 
 and $F/F_0$ a quadratic extension.  Let $D$ be a central division  algebra over $F$ with a $F/F_0$-involution 
 $\tau$.   Suppose that per$(D)$ is coprime to char$(K_0)$.
 Let $h$   be a  hermitian form  over $(D, \tau)$. If dim$(h)$ is even, disc$(h)$ is trivial and 
 Rost invariant $R(h)$  of $h$ is trivial, then $h$ is hyperbolic. 
\end{theorem}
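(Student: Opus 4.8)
The plan is to deduce the theorem from the two special cases already proved in this section, Proposition~\ref{ramified} and Proposition~\ref{unramified}, by splitting according to the ramification of $F/F_0$ and of $D$. If $F/F_0$ is ramified, the assertion is exactly Proposition~\ref{ramified}. If $F/F_0$ is unramified and $D$ is unramified as well, it is Proposition~\ref{unramified}. So the only case that needs an argument is $F/F_0$ unramified and $D$ ramified, i.e.\ $e(D/F) > 1$. As in the proofs of Propositions~\ref{ramified} and \ref{unramified}, one may first dispose of the subcase $\deg(D)$ odd by Proposition~\ref{odd} (using $\mathrm{cd}_2(F_0) \le 3$), so assume $\deg(D)$ even. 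Recall that here $D = D_0 + (E,\sigma,\pi_0)$ with $D_0/F$ unramified, $E/F$ unramified cyclic of degree $e(D/F)$, $E/F_0$ dihedral, and $\tau(E) = E$.

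In this case I would argue with residues. Use Lemma~\ref{pid} to choose a parameter $\pi_D \in \Lambda$ with $\tau(\pi_D) = \pm\pi_D$ and $\mathrm{int}(\pi_D)|_E$ a generator of $\mathrm{Gal}(E/F)$, and write $h = h_1 \perp \pi_D h_2$ with $h_1$ a hermitian form over $(D,\tau)$ and $h_2$ a hermitian or skew-hermitian form over $(D,\mathrm{int}(\pi_D)\tau)$, both given by units of $\Lambda$; by Proposition~\ref{rost-iso} we may assume $h$ is anisotropic. By Larmour (\cite{Larmour}) it is enough to show that the residue forms $\bar h_1$ over $(\bar D,\bar\tau)$ and $\bar h_2$ over $(\bar D,\overline{\mathrm{int}(\pi_D)\tau})$ are hyperbolic. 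Now $\bar D$ is a central division algebra over its centre $L$, the residue field of $E$, which is again a local field or a global field with no real places, and $\bar\tau$, $\overline{\mathrm{int}(\pi_D)\tau}$ are involutions of the second kind on $\bar D$, relative to quadratic subfields of $L$ satisfying the same residue hypotheses. So I would invoke the classification over the residue field, Proposition~\ref{dalgebra}: it suffices to check that the discriminant algebras $\DD(\bar D,\bar\tau,\bar h_1)$ and $\DD(\bar D,\overline{\mathrm{int}(\pi_D)\tau},\bar h_2)$ are trivial (for the possibly skew-hermitian $\bar h_2$, first multiply by a suitable skew unit and apply Lemma~\ref{two-inv}, or use Proposition~\ref{dim3}, to pass to a hermitian form). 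The hypotheses that $\dim(h)$ be even and $\mathrm{disc}(h)$ be trivial give the corresponding parity and discriminant conditions on $\bar h_1,\bar h_2$ by inspecting $h = h_1 \perp \pi_D h_2$, and the vanishing of the two discriminant algebras would be extracted from $R(h) = 0$ by computing the second residue at $\pi_D$ of the Rost invariant, using Lemma~\ref{residue-ri} and Proposition~\ref{ri-lambda} after transporting the situation through the Morita identification of Section~\ref{Morita} (Proposition~\ref{morita} and Corollary~\ref{morita-cor}). Granting triviality of the discriminant algebras, Proposition~\ref{dalgebra} makes $\bar h_1$ and $\bar h_2$ hyperbolic, and Larmour yields that $h$ is hyperbolic.

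The hard part is this last step: pinning down the residue forms. One must track the discriminant algebra through the decomposition $h = h_1\perp\pi_D h_2$, through the Morita correspondence of Section~\ref{Morita}, and through the Rost residue formula, and show it is forced to be trivial by $R(h)=0$; the skew-hermitian summand $\bar h_2$ is an additional nuisance, since $\overline{\mathrm{int}(\pi_D)\tau}$ is of the second kind with respect to a different quadratic subfield of $L$ than $\bar\tau$ is. Everything else is a matter of combining the computations of \S\ref{rost-comp}, the local/global input of \S\ref{global-local}, the Morita bookkeeping of \S\ref{Morita}, and the earlier propositions of this section.
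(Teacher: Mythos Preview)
Your reduction to the ramified-$F/F_0$ and unramified-$D$ cases via Propositions~\ref{ramified} and~\ref{unramified} is correct, and you have correctly isolated the remaining case. But the plan you sketch for that case has a real gap, and the paper proceeds quite differently.

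The difficulty is exactly where you put it: extracting triviality of $\DD(\bar D,\bar\tau,\bar h_1)$ and $\DD(\bar D,\overline{\mathrm{int}(\pi_D)\tau},\bar h_2)$ from $R(h)=0$. The tools you cite do not do this. Lemma~\ref{residue-ri} and Proposition~\ref{ri-lambda} compute $R(\langle 1,\lambda\rangle h_1)$ only for a \emph{central} scalar $\lambda\in K^*$; this is precisely what makes the unramified-$D$ argument in Proposition~\ref{unramified} work, because there $\pi_D=\pi_0\in F_0$ and one first shows $h_1\simeq h_2$ so that $h=\langle 1,\pi_0\rangle h_1$. When $D$ is ramified, $\pi_D$ is a noncentral parameter, $h_2$ is a form for the \emph{different} involution $\mathrm{int}(\pi_D)\tau$, and there is no formula in the paper expressing a ``residue of $R(h)$ at $\pi_D$'' in terms of the discriminant algebras of $\bar h_1$ and $\bar h_2$. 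The Morita computations of Section~\ref{Morita} do not supply such a formula either; they are set up for a different purpose. So as written, the key step is not justified.

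The paper instead runs an \emph{induction on the ramification index} $e(D/F)=[E:F]$, using the dihedral structure of $E/F_0$. If $[E:F]$ is odd, one finds an odd-degree extension $L/F_0$ with $LF=E$; then $D\otimes_{F_0}L$ is unramified, Proposition~\ref{unramified} applies over $L$, and Bayer--Lenstra descends hyperbolicity. If $[E:F]$ is even, let $E_1\subset E$ be the unique quadratic subextension of $E/F$ and $D_1$ the commutant of $E_1$ in $D$; the dihedral group provides a quadratic $L/F_0$ with $LF=E_1$, so $D\otimes_{F_0}L\simeq M_2(D_1)$. This is where Section~\ref{Morita} is actually used: Corollary~\ref{morita-cor} shows that under this Morita equivalence the summand $\pi_D h_2$ becomes \emph{hyperbolic}, while $h_1=\langle u_1,\dots,u_r\rangle$ becomes $\langle u_1,\dots,u_r\rangle\perp\pi_{D_1}^{-1}\langle\phi^{-1}(u_1),\dots,\phi^{-1}(u_r)\rangle$ over $(D_1,\tau_1)$. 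By Lemma~\ref{ram-index}, $e(D_1/E_1)<e(D/F)$, so the induction hypothesis makes this form hyperbolic over $D_1$; Larmour then forces $\langle u_1,\dots,u_r\rangle$ to be hyperbolic over $D_1$ and hence over $D$, so $h=\pi_D h_2$, and repeating the argument for $h_2$ finishes the proof. No residue formula for $R(h)$ in the ramified case is ever needed.
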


\begin{proof}  If $F/F_0$ is ramified, then $h$ is hyperbolic (\ref{ramified}). 
  If $F/F_0$ is unramified and $D$ is unramified, then $h$ is hyperbolic (\ref{unramified}).

Suppose that $F/F_0$ is unramified and $D$ is ramified.   Hence $\pi = \pi_0\in F_0$. 
We have 
$D = D_0 + (E, \sigma, \pi) \in Br(F)$ for some  unramified central division algebra  over $F$   and 
$E/F$ an unramified  cyclic extension  with $\sigma$ a generator of Gal$(E/F)$. 
Then cores$_{F/F_0}(D_0)$ and cores$_{F/F_0}(E, \sigma, \pi)$ are trivial  (\cite[Lemma 6.4]{PS2022}) and hence 
$D_0$ and $(E, \sigma, \pi)$ 
 have  $F/F_0$-involutions.  Since  $F_0$ is complete and 
 cores$_{F/F_0}(E, \sigma, \pi) = $ cores$_{F/F_0}(E, \sigma) \cdot(\pi)$,
 by taking the residues we see that cores$_{F/F_0}(E, \sigma) = 0$.

Suppose $[E : F]$ is odd.   Since $E/F_0$ is a dihedral extension (\cite[Proposition 3.2]{PS2022}), 
there exists a subfield $L$ of $E$ such 
that  $F_0 \subset L \subset E$, $[ L  : F_0] = [E : K]$ and $LF  = E$ (cf. \cite[Lemma 3.2]{PS2022}). 
Since $D\otimes _{F_0} L \simeq D\otimes_F E = D_0 \otimes_F E$ is 
unramified,  by the unramified case, $h$ is hyperbolic over $D\otimes L$. Since 
$[L : F_0] = [E : F]$ is odd,  $h$ is hyperbolic (\cite{BL}). 

Suppose $[E : F]$ is  even.  Let $E_1/F$ be the unique subextension of $E/F$ with $[ E_1 : F] = 2$. 
Let $D_1$ be the commutant of $E_1$ in $D$. 
Let $S_1$ be the integral closure of $R$ in $E_1$ and $\Lambda_1$ be the unique maximal $S_1$-order of
$D_1$.  Let $\pi_D \in D$ be as in (\ref{pid}).  Since int$(\pi_D)$ restricted to $E$ is the  automorphism $\sigma$ of 
$E/F$, int$(\pi_D)^2$ restricted to $E_1$ is identity.  Hence   $\pi_D^2 \in D_1$.  

Let $\nu_D$ be the discrete valuation on $D$. Since $\pi_D$ is a parameter  and
$e = [ E : F]$ is the ramification index of $D$, we have $\pi = u \pi_D^e$ for some unit $u \in \Lambda$.
Since $E_1 \subset E$,  $\pi_D^2$ is a parameter in $D_1$ (cf. \ref{ram-index}). 
Write $h = h_1 \perp  \pi_D h_2$ with $h_1 = < u_1, \cdots , u_r> $ and $h_2 =  < u_{r+1}, \cdots , u_n>$
for some units $u_i \in \Lambda$.
Since   $D\otimes E_1$ is Brauer equivalent to the division algebra $D_1$ (\cite[Ch.8, Theorem 5.4]{Sch}), 
by (\cite[Lemma 1.8]{JW}) $\bar{\Lambda}_1 = \bar{\Lambda} $. 
As in the proof of (\ref{ramified}),  we assume that      $u_i \in \Lambda_1 \subset D_1$ for all $i$. 

Since $E_1/F_0$ is a dihedral extension of degree 4, there exists a subfield $L$ of $E_1$ such that 
$F_0 \subset L$ and $LF = E_1$. Let $\pi_{D_1} = \pi_D^2 \in D_1$.
 Let $a \in D_1$ with $\tau(a) = a$ and consider the hermitian form $<a>$.
 Let $\phi = int(\pi_D)$. 
Then, by (\ref{morita-cor}), $<a \otimes 1>$ corresponds to the rank two hermitian form 
$$
\begin{pmatrix}
a & 0 \\ 0 &  \pi_{D_1}^{-1} \phi^{-1}(a)
\end{pmatrix}
$$
over $D_1$ under the isomorphism
 $D\otimes_{F_0} L \simeq D\otimes_FE_1 \simeq M_2(D_1)$ and Morita equivalence. 
 Similarly if $<\pi_D a>$ is a   hermitian form over $D$ with $a \in D_1$, then it corresponds to 
 the hermitian form
  $$
  \begin{pmatrix}
1 & 0 \\ 0 &     \pi_{D_1}^{-1}
\end{pmatrix}
  \begin{pmatrix}
0 &   \pi_{D_1}  \\  1  & 0   
\end{pmatrix}
\begin{pmatrix}
a & 0 \\ 0 &   \phi^{-1}(a)
\end{pmatrix} = \begin{pmatrix}
0 &    \pi_{D_1}\phi^{-1}(a)  \\      \pi_{D_1}^{-1}  a & 0 
\end{pmatrix}
$$
over $D_1$.  In particular, it is hyperbolic. 
Thus the the form $h = h_1 \perp   \pi_{D} h_2$ corresponds, under the Morita equivalence, 
 to $<u_1, \cdots, u_r> \perp   \pi_{D_1}^{-1} 
<\phi^{-1}(u_1), \cdots, \phi^{-1}( u_r)>
\perp h_3$ over $D_1$, with $h_3$ hyperbolic.   

Since the dimension, discriminant and the Rost invariant of $h$  are trivial,  it follows that 
the dimension, discriminant and  the Rost invariant of $<u_1, \cdots, u_r> \perp   \pi_{D_1}^{-1} 
<\phi^{-1}(u_1), \cdots, \phi^{-1}( u_r)>$  are trivial (\ref{rost-iso}). 
 Since the ramification index of $D_1$ is strictly less than the ramification index 
of $D$ (cf. \ref{ram-index}), by induction,  $<u_1, \cdots, u_r> \perp   \pi_{D_1}^{-1} 
<\phi^{-1}(u_1), \cdots, \phi^{-1}( u_r)>$  is hyperbolic over $D_1$. 

Let $u \in \Lambda_1$ be a unit with $\tau(\pi_{D_1}^{-1} u) = \pi_{D_1}^{-1}  u$. 
Since $\tau(\pi_{D_1}) = \pi_{D_1}$,    we have 
$$
\begin{array}{rcl}
 <\pi_{D_1}^{-1} u>  & \simeq  &  < \pi_{D_1} ( \pi_{D_1}^{-1} u)  \tau(\pi_{D_1}) >  \\
 & \simeq &  < \pi_{D_1} ( \pi_{D_1} ^{-1}u)   \pi_{D_1} >  \\
 & \simeq &  < \pi_{D_1} ( \pi_{D_1}^{-1} u \pi_{D_1}) >  \\  
 \end{array}
  $$
Let $u' = \pi_{D_1}^{-1} u \pi_{D_1}$. Since $u \in \Lambda_1$ is a unit, $u' \in \Lambda_1$ is a unit
and $\pi_{D_1}^{-1}<u> \simeq \pi_{D_1}<u'>$.
Hence  $\pi_{D_1}^{-1}  <\phi^{-1}(u_1), \cdots, \phi^{-1}( u_r)> \simeq \pi_{D_1}<u_1', \cdots , u_r'>$ for some
units $u_i' \in \Lambda_1$.  Since $<u_1, \cdots, u_r> \perp  \pi_{D_1}<u_1', \cdots , u_r'>$ is  hyperbolic  over $D_1$ 
and  $\pi_{D_1}$ is a parameter in $\Lambda_1$,   $<u_1, \cdots, u_r>$ is hyperbolic over $D_1$
(\cite{Larmour}). 
 In particular   $h_1 = <u_1, \cdots, u_r>$ is  hyperbolic over $D$.  Hence we have $h= \pi_D h_2$.
Let $\tau' = $ int$(\pi_D)$. Then $h_2$ is hermitian form over $(D, \tau')$. Since the diagonal entries of $h_2$ are units, 
as above, $h_2$ is hyperbolic. Hence $h$ is hyperbolic. 
\end{proof}

\begin{theorem}
\label{class-semiglobal} Let $K$ be a local field with residue field $\kappa$ and
 $F$  the function field of a  curve over $K$. Let $A$ be a central simple algebra over $F$ with 
 an involution  $\tau$ of second kind. Suppose that 2per$(D)$ is coprime to char$(\kappa)$.
  Let $h$  be a hermitian form over  $(A, \tau)$. 
 If dim$(h)$ is even,   disc$(h)$ is trivial   and $R(h)$ is trivial, then 
 $h$ is hyperbolic.
 \end{theorem}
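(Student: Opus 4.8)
The plan is to deduce the theorem from the classification over complete discretely valued fields, Theorem~\ref{class-complete}, via the Hasse principle for hyperbolicity of hermitian forms over semi-global fields, \cite[Theorem 11.6]{PS2022}. Write $F_0 = F^{\tau}$ for the fixed field of $\tau$, so that $[F:F_0] = 2$ and the group $SU(A,\tau,h)$, together with the notion of hyperbolicity of $h$, is defined over $F_0$. The field of constants of $F_0$ is a subfield of the local field $K$ of degree at most $2$, hence is itself a non-archimedean local field with finite residue field; thus $F_0$ is again the function field of a curve over a local field. Note too that the hypothesis that $2\mathrm{per}(D)$ be coprime to $\mathrm{char}(\kappa)$ forces $\mathrm{char}(\kappa)$ to be an odd prime. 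By \cite[Theorem 11.6]{PS2022} it therefore suffices to show that $h\otimes_{F_0}(F_0)_v$ is hyperbolic for every divisorial discrete valuation $v$ of $F_0$.

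Fix such a $v$, and set $F_{0,v} = (F_0)_v$, $F_v = F\otimes_{F_0}F_{0,v}$. If $F_v$ is not a field, then $h\otimes F_{0,v}$ is hyperbolic (\cite[Ch.~10, Remark~6.3]{Sch}), so we may assume $F_v/F_{0,v}$ is a quadratic field extension; then $\tau$ induces an $F_v/F_{0,v}$-involution on $A\otimes_F F_v$. The field $F_{0,v}$ is complete discretely valued, and its residue field is either a non-archimedean local field (when $v$ is horizontal on a regular model of $F_0$) or the function field of a curve over a finite field, that is, a global field of positive characteristic with no real places (when $v$ is vertical, arising from a component of the special fibre). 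In both cases the residue field is of the type allowed in Theorem~\ref{class-complete}, and its characteristic is $0$ or $\mathrm{char}(\kappa)$, in particular not $2$.

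Now write $A\otimes_F F_v = M_m(D')$ with $D'$ division, the involution $\tau$ on $M_m(D')$ being the adjoint of a hermitian form over $(D',\tau_0')$ for a second-kind involution $\tau_0'$ on $D'$. By Morita equivalence, $h\otimes F_{0,v}$ corresponds to a hermitian form $\tilde{h}$ over $(D',\tau_0')$ with the same dimension, discriminant and Rost invariant (see \S\ref{prelims} and Lemma~\ref{morita-rost-inv}), and $\tilde{h}$ is hyperbolic if and only if $h\otimes F_{0,v}$ is. Since $\dim(h)$ is even and $\mathrm{disc}(h)$, $R(h)$ are trivial over $F_0$, the same holds over $F_{0,v}$ by compatibility with scalar extension, so $\dim(\tilde{h})$ is even and $\mathrm{disc}(\tilde{h})$, $R(\tilde{h})$ are trivial. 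Finally $\mathrm{per}(D') = \mathrm{per}(D\otimes_F F_v)$ divides $\mathrm{per}(D)$, hence is coprime to $\mathrm{char}(\kappa)$ and so to the residue characteristic of $F_{0,v}$. Theorem~\ref{class-complete} applied over $F_{0,v}$ now yields that $\tilde{h}$, and therefore $h\otimes F_{0,v}$, is hyperbolic.

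Since this holds for every divisorial valuation of $F_0$, the Hasse principle \cite[Theorem 11.6]{PS2022} gives that $h$ is hyperbolic over $F$, which proves the theorem. Apart from invoking Theorem~\ref{class-complete} and \cite[Theorem 11.6]{PS2022}, the only real content is the verification above: that every divisorial completion of $F_0$ has residue field among the ones handled by Theorem~\ref{class-complete}, and that passing to such a completion --- possibly splitting $F/F_0$ or enlarging the matrix part of $A$ --- preserves the evenness of the dimension and the triviality of the discriminant and Rost invariant. I expect this bookkeeping, rather than any genuinely new argument, to be the main obstacle.
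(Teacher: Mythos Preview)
Your proposal is correct and follows essentially the same route as the paper: reduce to completions at divisorial valuations of $F_0=F^{\tau}$ via the Hasse principle \cite[Theorem 11.6]{PS2022}, dispose of the case where $F\otimes_{F_0}F_{0,v}$ splits, and in the remaining case invoke Theorem~\ref{class-complete}. You supply a bit more detail than the paper does---explicitly checking that the residue field of $F_{0,v}$ is local or global with no real places, and performing the Morita reduction from $A\otimes F_v$ to its underlying division algebra so that Theorem~\ref{class-complete} (stated for division algebras) literally applies---but the argument is the same.
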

 
 \begin{proof} Suppose dim$(h)$ is even,   disc$(h)$ is trivial   and $R(h)$ is trivial.
 Let $F_0 = F^\tau$ and $\Omega$ be the set of  divisorial discrete valuations of $F_0$. 
 By the local global principal for hermitian forms (\cite[Theorem 11.6]{PS2022}), it is enough to show that 
 $h$ is hyperbolic over the completions $F_{0\nu}$ for all $\nu \in \Omega$. 
 
 Let $\nu \in \Omega$. Suppose $F\otimes_{F_0} F_{0\nu}$ is not a field.
 Then, by (\cite[Proposition 2.14]{KMRT}),  $h$ is hyperbolic over $F_{0\nu}$. 
 Suppose $F\otimes_{F_0} F_{0\nu}$ is   a field. Then, by (\ref{class-complete}), 
 $h$ is hyperbolic. 
 \end{proof}

 \section{Rost Injectivity}
 \label{main}

 \begin{prop}
\label{cores-ram} Let $F_0$  a complete discretely valued field with residue field  $K_0$
 and  $F/F_0$ a  ramified quadratic extension.  Let $A$ be a central simple algebra over $F$ with 
 a $F/F_0$-involution.   Suppose that $2 per(A)$ is coprime to char$(K_0)$.  
 Let $\mu \in F^*$.  If cores$_{F/F_0}(A \cdot (\mu)) = 0$,  then  there exist $\lambda \in F_0$ such that 
 $ A \cdot (\mu) = A \cdot (\lambda)$.
 \end{prop}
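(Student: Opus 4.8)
The plan is to exploit that a ramified quadratic extension is obtained by adjoining a square root of a uniformizer, and that a cup product with $[A]$ is insensitive to principal units.

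First I would fix a uniformizer $\pi$ of $F_0$ with $F=F_0(\sqrt{\pi})$, so that $\sqrt{\pi}$ is a uniformizer of $F$ and the residue fields of $F$ and $F_0$ both equal $K_0$. Setting $\delta=v_F(\mu)\bmod 2\in\{0,1\}$ and $i=(v_F(\mu)-\delta)/2$, the element $\mu\,\pi^{-i}(\sqrt{\pi})^{-\delta}$ is a unit of $\mathcal{O}_F$; lifting its residue to a unit $u_0$ of $\mathcal{O}_{F_0}$ and putting $\lambda_0=\pi^i u_0\in F_0^*$ one gets $\mu=\lambda_0\,(\sqrt{\pi})^{\delta}\,\varepsilon$ with $\varepsilon\in 1+\mathfrak{m}_F$. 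Since $2\,\mathrm{per}(A)$ is coprime to $\mathrm{char}(K_0)$, Hensel's lemma shows every element of $1+\mathfrak{m}_F$ is an $m$-th power in $F^*$ for the relevant modulus $m$, so $A\cdot(\varepsilon)=0$ and therefore $A\cdot(\mu)=A\cdot(\lambda_0)+\delta\cdot A\cdot(\sqrt{\pi})$. If $\delta=0$ we are done with $\lambda=\lambda_0$, using nothing about the corestriction hypothesis.

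The substantive case is $\delta=1$, and here I expect the main work (and the main obstacle) to be the structural reduction of $A$. Since $A$ admits an $F/F_0$-involution, $\mathrm{cores}_{F/F_0}[A]=0$; feeding this into the compatibility of the residue map with corestriction along the totally ramified extension $F/F_0$ forces $[A]$ to be unramified over $F$, so $A\simeq A_0\otimes_{F_0}F$ for an unramified central simple $F_0$-algebra $A_0$ — the same input used (for a local or global residue field) in the proof of Proposition~\ref{ramified}, here to be obtained for arbitrary $K_0$ — and moreover $2[A_0]=\mathrm{cores}_{F/F_0}(\mathrm{res}_{F/F_0}[A_0])=\mathrm{cores}_{F/F_0}[A]=0$. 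The projection formula then gives $\mathrm{cores}_{F/F_0}(A\cdot(\mu))=A_0\cdot(N_{F/F_0}(\mu))$, and $v_{F_0}(N_{F/F_0}(\mu))=v_F(\mu)$ is odd. Taking $\partial_{v_{F_0}}$ of this class, which vanishes by hypothesis, and using that $A_0$ is unramified, one finds that an odd multiple of the residue class $\bar A_0\in\mathrm{Br}(K_0)$ is zero; as $2\bar A_0=0$ too, $\bar A_0=0$, so $A_0$ and hence $A$ is split. Then $A\cdot(\mu)=0=A\cdot(1)$ and $\lambda=1$ works. In short, the corestriction hypothesis in the case $\delta=1$ does no more than force $A$ to be split; the points requiring care are keeping track of which $\mu_m^{\otimes 2}$ (any $m$ coprime to $\mathrm{char}(K_0)$ divisible by $\mathrm{per}(A)$) the classes live in, and justifying the residue–corestriction compatibility for the ramified quadratic extension.
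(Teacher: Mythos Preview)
Your proof is correct and follows essentially the same approach as the paper: both reduce to $A\simeq A_0\otimes_{F_0}F$ with $A_0$ unramified over $F_0$ of period at most $2$, decompose $\mu$ into an $F_0$-factor times a power of $\sqrt{\pi_0}$ modulo something killed by $[A]$, and then take the residue of $\mathrm{cores}_{F/F_0}(A\cdot(\mu))=A_0\cdot(N_{F/F_0}(\mu))$ to force $A^r=0$ (equivalently, $A$ split when $r$ is odd). The only cosmetic differences are that the paper cites \cite[Lemma~6.3]{PS2022} for the structural reduction (rather than deriving it via residue--corestriction compatibility as you do), and kills the unit part using $\mathrm{per}(A)\le 2$ to erase squares, whereas you kill the principal-unit factor $\varepsilon$ via Hensel; these lead to the same conclusion.
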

 
 \begin{proof}  Since $F/F_0$ is ramified, by (\cite[Lemma 6.3]{PS2022}), there exists an unramified central simple algebra 
 $A_0$ over $F_0$  with  per$(A) \leq 2$ such that $A  \simeq A_0 \otimes_{F_0}F$. 
   Since $F/F_0$ is ramified and char$(K_0) \neq 2$, there exists a
 parameter   $\pi_0 \in F_0$ such that $F = F_0(\sqrt{\pi_0})$. Let $\pi = \sqrt{\pi_0}$.
Write $\mu = \mu_0 \pi^r$ for some unit $\mu_0$ in the valuation ring of $F$.
Since 2 is coprime to char$(K_0)$, 
 we have $\mu_0 = \lambda \mu_1^2$ for some  $\mu_1 \in F$ and 
$\lambda \in F_0$ a unit in the valuation ring $R_0$.  Since per$(A) \leq 2$, 
$A \cdot (\mu_1^2) = 0$ and hence  
$A \cdot (\mu) = A \cdot (\lambda) + A \cdot (\pi^r)$.  

Since $A = A_0 \otimes_{F_0} F$, we have
$$\begin{array}{rcl} 
cores_{F/F_0}(A \cdot (\mu)  & = &  cores_{F/F_0}(A \cdot (\lambda) + cores_{F/F_0}(A \cdot (\pi^r) \\
& = &  A_0 \cdot  ( N_{F/F_0}(\lambda)) + A_0 \cdot  (N_{F/F_0}(\pi)^r).
\end{array} $$ 
Since $\lambda \in F_0$ and per$(A_0) = 2$, we have  $ A_0 \cdot  (N_{F/F_0}(\lambda)) = 
A_0 \cdot (\lambda^2)= 0$.
Since $\pi = \sqrt{\pi_0}$, we have $N_{F/F_0}(\pi) = -\pi_0$ and 
hence $A_0 \cdot (N_{F/F_0}(\pi)^r) = A_0 \cdot ((-\pi_0)^r)$. 
Since cores$_{F/F_0}(A \cdot (\mu) = 0$,  $A_0 \cdot ((-\pi_0)^r) = 0 $.

Since $A_0$ is unramified, there is an Azumaya algebra $\AA_0$ over $R_0$
such that $\AA_0 \otimes_{R_0}F_0 \simeq A_0$. Let $\bar{A}_0 = \AA_0 \otimes_{R_0} K_0$.    
Since   $A_0 \cdot ((-\pi_0)^r)   = 0$,
by taking the residues, we see that  $\bar{A}_0^r$ is trivial.  Since $F_0$ is complete, $A_0^r$ is trivial
and hence $A^r = 0$. 

Thus  $A \cdot (\mu) =  A \cdot (\lambda) + A \cdot (\pi)^r = A \cdot (\lambda) + A^r \cdot (\pi) =  A \cdot (\lambda) $. 
\end{proof}

\begin{prop}
\label{cores} Let $F_0$  a complete discretely valued field with residue field  $K_0$ a global field with no real places or 
a local field
 and $F/F_0$ a quadratic extension.  Let $A$ be a central simple algebra over $F$ with a $F/F_0$-involution 
 $\tau$.   Suppose that $2 per(A)$ is coprime to char$(K_0)$.  
 Let $\mu \in F^*$. If   cores$_{F/F_0}( A \cdot (\mu)) = 0$, then there exists $\lambda \in F_0$ such that 
$ A \cdot (\mu) = A \cdot (\lambda)$.
 \end{prop}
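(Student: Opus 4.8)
The plan is to split according to whether $F/F_0$ is ramified or unramified. If $F/F_0$ is ramified, the statement is exactly Proposition~\ref{cores-ram}, so there is nothing further to do; assume henceforth that $F/F_0$ is unramified and fix a common parameter $\pi_0\in F_0$. Replacing $A$ by its underlying division algebra (which changes neither its Brauer class nor $A\cdot(\mu)$), I would write $A = A_0 + (E,\sigma,\pi_0)$ in $\Br(F)$ as recalled in \S\ref{classification}, with $A_0$ unramified over $F$, with $E/F$ an unramified cyclic extension and $\sigma$ a generator of $\mathrm{Gal}(E/F)$; by \cite[Lemma 6.4]{PS2022} both $A_0$ and $(E,\sigma,\pi_0)$ carry $F/F_0$-involutions, so $\cores_{F/F_0}$ annihilates each of them.

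First I would reduce to the case that $\mu$ is a unit. Writing $\mu = \mu_0\pi_0^r$ with $\mu_0$ a unit of the valuation ring of $F$, the projection formula gives $\cores_{F/F_0}(A\cdot(\pi_0^r)) = \cores_{F/F_0}(A)\cdot(\pi_0^r) = 0$, hence $\cores_{F/F_0}(A\cdot(\mu_0)) = 0$; and if $A\cdot(\mu_0) = A\cdot(\lambda_0)$ for some $\lambda_0\in F_0$ then $\lambda = \lambda_0\pi_0^r\in F_0$ satisfies $A\cdot(\mu) = A\cdot(\lambda)$. So I may assume $\mu$ is a unit.

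Next comes the residue analysis. Let $\bar F$ be the residue field of $F$, a quadratic extension of $K_0$ (hence again a local field or a global field with no real places), and let $\bar E/\bar F$, $\bar\sigma$ be the residue data of $E/F$, $\sigma$. Writing $(E,\sigma,\pi_0) = \chi\cup(\pi_0)$ with $\chi\in H^1(F,\Q/\Z)$ the character of $E/F$, a standard residue computation at the valuation of $F$ gives $\residue(A\cdot(v)) = (\bar E,\bar\sigma,\bar v)$ in $\Br(\bar F)$ up to a sign independent of the unit $v$. Since residues commute with corestriction along the unramified extension $F/F_0$, the hypothesis yields $\cores_{\bar F/K_0}(\bar E,\bar\sigma,\bar\mu) = 0$; and from $\cores_{F/F_0}(E,\sigma,\pi_0) = \cores_{F/F_0}(\chi)\cup(\pi_0) = 0$, taking the residue at $\pi_0$ gives $\cores_{\bar F/K_0}(\bar E,\bar\sigma) = 0$, so $\bar E/K_0$ is a dihedral extension by \cite[Proposition~3.2]{PS2022} (vacuously so when $\bar E = \bar F$). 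Then Proposition~\ref{globalfield} furnishes $\bar\lambda\in K_0^{*}$ with $(\bar E,\bar\sigma,\bar\mu) = (\bar E,\bar\sigma,\bar\lambda)$, and I lift $\bar\lambda$ to a unit $\lambda$ of the valuation ring $R_0$ of $F_0$.

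The last step is to conclude. By construction $A\cdot(\mu\lambda^{-1}) = A\cdot(\mu) - A\cdot(\lambda)$ has vanishing residue, hence is unramified; since $F$ is complete and this class is $per(A)$-torsion with $per(A)$ prime to $char(K_0) = char(\bar F)$, it is the inflation of a class in $H^3(\bar F,\Q/\Z(2))$. But $\bar F$ is a local field or a global field with no real places, so the prime-to-$char(\bar F)$ part of $H^3(\bar F,\Q/\Z(2))$ is trivial (cohomological dimension $\le 2$ away from the characteristic; in the global case one may also invoke Kato's theorem \cite[Theorem~5.2]{Ka} together with the vanishing of $H^3$ over the non-archimedean and complex completions). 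Hence $A\cdot(\mu\lambda^{-1}) = 0$, i.e. $A\cdot(\mu) = A\cdot(\lambda)$, which completes the argument. I expect the main obstacle to be the unramified case: one must correct $A\cdot(\mu)$ by a symbol $A\cdot(\lambda)$ with $\lambda$ coming from $F_0$ so that the difference becomes unramified, and organizing this is exactly what forces the detour through the dihedral structure of $\bar E/K_0$ and Proposition~\ref{globalfield}; once the residues are matched the leftover unramified class vanishes for free, because $\bar F$ carries no $H^3$ away from its characteristic.
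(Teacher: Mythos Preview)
Your argument is correct and follows essentially the same route as the paper's: split on ramified versus unramified, decompose $A = A_0 + (E,\sigma,\pi_0)$, pass to residues to reduce to Proposition~\ref{globalfield} over the residue field, and kill the leftover unramified class using the vanishing of $H^3$ in cohomological dimension $\le 2$. The only organizational differences are that you reduce to $\mu$ a unit at the outset (the paper carries the factor $\pi_0^r$ through to the end) and that you phrase the final step as ``the unramified difference $A\cdot(\mu\lambda^{-1})$ lies in $H^3_{\mathrm{nr}}(F)\simeq H^3(\bar F)=0$'', whereas the paper invokes the equivalent fact $A_0\cdot(u)=0$ for units $u$ earlier and then checks $A\cdot(\mu)=A\cdot(\lambda)$ by a direct chain of equalities.
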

 
 \begin{proof}  If  $F/F_0$ is ramified,  this follows from  (\ref{cores-ram}). 
 
 Suppose that $F/F_0$ is unramified.   Let $\pi_0$ be a parameter in $F_0$.
 Then $\pi_0$ is also a parameter in $F$.
 We have   $A = A_0 + (E, \sigma, \pi_0)$ for some unramified  algebra  $A_0$ over $F$ 
 and  a cyclic extension $E/F$ which is unramified (\cite[Lemma 5.14]{JW}, cf. 
\cite[Lemma 4.1]{PPS}). Further cores$_{F/F_0}(A_0) = 0$ and 
 $E/F_0$ is a dihedral extension  (cf. \cite[Proposition 3.2 and Lemma 6.4]{PS2022}).

 Let  $K$ and $L$  be the residue fields of  $F$ and $E$ respectively. 
 Then $K/K_0$ is a quadratic extension, $L/K$ a cyclic extension and 
 $L/ K_0$ a dihedral extension.
 Let $\sigma_0$ be the automorphism of $L$ induced by $\sigma$. 
 
  Let $n = per(A)$. 
 Since   cd$_2(K) \leq 2$, $n$ is coprime to char$(K)$ and $F$ is complete,  $H^3_{nr}(F, \mu_n^{\otimes 2}) 
 \simeq H^i(K, \mu_n^{\otimes 2}) = 0$ (cf. \cite[7.10]{serrecohinv}).
 In particular, since $A_0$ is unramified, 
 for any unit $u $ in the valuation ring of $F$, $A_0 \cdot ( u ) = 0$. 
 
 We  write $\mu = \mu_0  \pi_0^r$ for some $\mu_0 \in F$ a unit in the valuation ring. 
Since $A_0 \cdot (\mu_0) = 0$, we have 
 $$ A \cdot (\mu) = A _0 \cdot (\pi_0^r)  + (E,\sigma, \pi_0) \cdot (\mu_0 \pi_0^r).$$
 Since $\pi_0 \in F_0$, we have cores$_{F/F_0}( A_0\cdot (\pi_0^r)) = cores_{F/F_0}(A_0) \cdot (\pi_0^r)$.
 Since cores$_{F/F_0}(A_0) = 0$, we have  cores$_{F/F_0}( A_0\cdot (\pi_0^r)) = 0$.
 Hence 
 $$ cores_{F/F_0}(A \cdot (\mu)) = cores_{F/F_0}( (E, \sigma, \pi_0) \cdot (\mu_0 \pi_0^r)).$$
 Since 
 $$(E,\sigma, \pi_0) \cdot (\mu_0\pi_0^r) = (E, \sigma, \pi_0) \cdot (\mu_0) = (E, \sigma, \mu_0) \cdot (\pi_0^{-1})$$
 and cores$_{F/F_0}(A \cdot (\mu)) = 0$,  by taking the residues,  we get   
  cores$_{K/K_0}(L, \sigma_0, \bar{\mu}_0) = 0$, where $\bar{\mu}_0$ denotes the image of $\mu_0$ in $K_0$. 
  
 Since $K_0$ is a global field, by (\ref{globalfield}), there exists $\lambda_0 \in K_0$ such that 
 $(L, \sigma_0, \bar{\mu}_0) = (L, \sigma_0, \lambda_0)$.  Let $\lambda_1 \in F_0$ be a lift of 
 $\lambda_0$.  Since $F$ is complete,  $(E, \sigma, \mu_0) = (E, \sigma, \lambda_1)$. 
 In particular  $(E, \sigma, \pi) \cdot ( \mu_0) = (E, \sigma, \pi) \cdot ( \lambda_1).$
   
Since 
 $\mu_0, \lambda_1$ are units in the valuation ring,  we have $A _0 \cdot (\mu_0  ) = A_0 \cdot (\lambda_1 ) = 0$. 
 
 Let $\lambda = \lambda_1 \pi^r \in F_0$. 
Since $(E, \sigma, \pi) \cdot (\mu_0 \pi^r) = (E,\sigma, \pi) \cdot ( \lambda_1 \pi^r) =  (E,\sigma, \pi) \cdot ( \lambda)$. 
We have 
$$
\begin{array}{rcl}
   A \cdot (\mu) &  = & A _0 \cdot  (\mu) +  (E, \sigma, \pi) \cdot ( \mu ) \\
&  =  & A _0 \cdot  (\mu_0  \pi^r) +  (E, \sigma, \pi) \cdot ( \mu_0 \pi^r) \\
&  =  & A _0 \cdot  (\mu_0) + A _0 \cdot  (\pi^r)  +  (E, \sigma, \pi) \cdot ( \mu_0  )  +  (E, \sigma, \pi) \cdot (   \pi^r) \\
&  =  & A _0 \cdot  (\lambda_1) + A _0 \cdot  (\pi^r)  +  (E, \sigma, \pi) \cdot ( \lambda_1  )  +  (E, \sigma, \pi) \cdot (   \pi^r) \\
& = &   A_0 \cdot (\lambda_1 \pi^r ) + (E,\sigma,\pi) \cdot (\lambda_1 \pi^r)  \\
& = &   A_0 \cdot (\lambda ) + (E,\sigma,\pi) \cdot (\lambda )  \\
& = &  A \cdot (\lambda).
\end{array}$$ 
 \end{proof}

 Let $F_0$ be a field of characteristic not 2 and  $F/F_0$  a quadratic  extension. Let  $A$ be 
 a central simple algebra over $F$ with a $L/K$-involution $\tau$.  Let $h$ be a hermitian form over $(A,\tau)$. 
  The   short  exact sequence of algebraic groups 
 $$ 1 \to SU(A, \tau, h) \to U(A, \tau, h) \to R^1_{F/F_0}\G_m \to 1 $$
 gives the   exact sequence of cohomology sets
 $$  F^{*1} \buildrel{\delta}\over{\to} H^1(F_0, SU(A, \tau, h)) \to H^1(F_0, U(A, \tau, h)).  $$
 
\begin{prop}
\label{center} Let $F_0$  a complete discretely valued field with residue field a global field 
with no real places or 
a local field
 and $F/F_0$ a quadratic extension.  Let $A$ be a central simple algebra over $F$ with a $F/F_0$-involution 
 $\tau$ and $G = SU(A, \tau, h)$.  Let $\theta \in F^{*1}$. 
 If $R_G(\delta(\theta)) = 0$, then $\delta(\theta) $ is trivial.  \end{prop}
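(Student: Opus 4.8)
The plan is to pass first to a division algebra and then to recognize $\theta$ as a reduced norm arising from the unitary group. By Morita equivalence (cf.\ \S\ref{Morita} and the commutative diagrams preceding \ref{morita-rost-inv}) write $A=M_r(D)$ with $D$ a central division algebra over $F$ and replace $\tau,h$ by the involution $\tau_0$ on $D$ and the hermitian form $\tilde h$ over $(D,\tau_0)$ associated to $h$; this yields an isomorphism $SU(A,\tau,h)\isom SU(D,\tau_0,\tilde h)$ compatible with the maps to $U$ and to $R^1_{F/F_0}\G_m$, hence with the connecting maps $\delta$, and compatible with the Rost invariants (an isomorphism of absolutely simple simply connected groups has $n_\phi=1$, as recalled in \S\ref{rost}); moreover $Nrd(A^{\times})=Nrd(D^{\times})$. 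So I may assume $A=D$ (still writing $\tau,h$ for the resulting data). The exact sequence
\[
U(D,\tau,h)(F_0)\xrightarrow{\;Nrd\;}F^{*1}\xrightarrow{\;\delta\;}H^1(F_0,SU(D,\tau,h))\longrightarrow H^1(F_0,U(D,\tau,h))
\]
attached to $1\to SU(D,\tau,h)\to U(D,\tau,h)\to R^1_{F/F_0}\G_m\to 1$ shows, by exactness at $F^{*1}$, that $\delta(\theta)$ is trivial if and only if $\theta\in Nrd\bigl(U(D,\tau,h)(F_0)\bigr)$, which is what I shall prove.

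By Hilbert's Theorem~90 for the norm-one torus, write $\theta=\mu\,\tau(\mu)^{-1}$ with $\mu\in F^{\times}$. By the computation of $R_G\circ\delta$ recalled in \S\ref{rost}, $R_G(\delta(\theta))=\cores_{F/F_0}\bigl(D\cdot(\mu)\bigr)$, so the hypothesis $R_G(\delta(\theta))=0$ says $\cores_{F/F_0}\bigl(D\cdot(\mu)\bigr)=0$. Proposition~\ref{cores} then produces $\lambda\in F_0^{\times}$ with $D\cdot(\mu)=D\cdot(\lambda)$; since $\tau$ fixes $\lambda$, we have $\theta=(\mu\lambda^{-1})\,\tau(\mu\lambda^{-1})^{-1}$, so, replacing $\mu$ by $\mu\lambda^{-1}$, I may assume $D\cdot(\mu)=0$.

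It then remains to show that $\mu$ is a reduced norm from $D$, and afterwards to exhibit $\theta$ as the reduced norm of an isometry of $h$. The equality $D\cdot(\mu)=0$ says exactly that the Rost invariant of $SL_1(D)$—which on $H^1(F,SL_1(D))=F^{\times}/Nrd(D^{\times})$ is cup product with $[D]$—annihilates the class of $\mu$. When $F/F_0$ is ramified, $\mathrm{ind}(D)\le 2$ by \cite[Lemma~6.3]{PS2022} (as already used in \ref{ramified}), so the index is squarefree and the theorem of Merkurjev and Suslin \cite[Theorem~24.2]{suslin} yields $\mu\in Nrd(D^{\times})$. When $F/F_0$ is unramified one descends to the residue field: since the residue field $K_0$ of $F_0$ is a local field or a global field with no real places, reduced norms of division algebras over $K_0$ and over the residue field of $F$ are surjective, and combining this with the structure of $D$ over the complete discretely valued field $F$ recalled in \S\ref{classification} (the decomposition $D=D_0+(E,\sigma,\pi_0)$, together with a dévissage on the ramification of $D$ as in the proof of \ref{class-complete}) one again obtains $D\cdot(\mu)=0\Rightarrow\mu\in Nrd(D^{\times})$. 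Finally, write $\mu=Nrd(d)$ with $d\in D^{\times}$, so that $\theta=Nrd(d)\,\tau(Nrd(d))^{-1}$. If $h$ is isotropic, $\theta$ is precisely the reduced norm of the isometry of $h$ acting by $x_1\mapsto dx_1$, $x_2\mapsto\tau(d)^{-1}x_2$ on a hyperbolic plane $D\oplus D$ in the underlying $D$-module and by the identity on its orthogonal complement; hence $\theta\in Nrd(U(D,\tau,h)(F_0))$ and $\delta(\theta)$ is trivial. If $h$ is anisotropic one argues similarly, using that over the fields considered anisotropic hermitian forms over $(D,\tau)$ have small dimension, so that $Nrd$ on $U(D,\tau,h)(F_0)$ can be analysed directly, invoking the low-rank classification results of \S\ref{class-gen} where needed.

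The step I expect to be the main obstacle is the implication $D\cdot(\mu)=0\Rightarrow\mu\in Nrd(D^{\times})$ when $F/F_0$ is unramified and $\mathrm{ind}(D)$ is not squarefree: this forces a dévissage on the ramification of $D$ and uses the arithmetic of the residue field (surjectivity of reduced norms, and, in the global case, the vanishing of $H^3(K_0,-)$). A secondary technical point is the treatment of anisotropic $h$ in the last step, where the hyperbolic-plane construction is not directly available.
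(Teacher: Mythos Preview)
Your overall strategy coincides with the paper's: write $\theta=\mu\,\tau(\mu)^{-1}$, use $R_G(\delta(\theta))=\cores_{F/F_0}\bigl(A\cdot(\mu)\bigr)$, invoke Proposition~\ref{cores} to find $\lambda\in F_0^\times$ with $A\cdot(\mu)=A\cdot(\lambda)$, replace $\mu$ by $\lambda^{-1}\mu$ so that $A\cdot(\mu)=0$, deduce $\mu\in\mathrm{Nrd}(A^\times)$, and conclude that $\theta$ lies in the image of $\mathrm{Nrd}:U(A,\tau,h)(F_0)\to F^{*1}$. The preliminary Morita reduction to a division algebra is harmless but not needed.

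You are right that the implication $A\cdot(\mu)=0\Rightarrow\mu\in\mathrm{Nrd}(A^\times)$ is where the arithmetic of the residue field enters; the paper simply writes ``In particular $\lambda^{-1}\mu\in\mathrm{Nrd}(A)$'' with no justification. This is Rost injectivity for $SL_1(A)$ over a complete discretely valued field whose residue field is local or global (with the appropriate characteristic assumption), which is known---it is essentially the input from \cite{PPS} that the paper later invokes in the proof of Theorem~\ref{main-rost-su}. So your d\'evissage sketch is on the right track, and both write-ups are somewhat casual at this point.

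Where your proposal has a genuine gap is the final step. Once $\mu\in\mathrm{Nrd}(A^\times)$, the paper concludes in one line that $\theta=\mu\tau(\mu)^{-1}$ lies in $\mathrm{Nrd}\bigl(U(A,\tau,h)(F_0)\bigr)$ by citing \cite[p.~202]{KMRT}: this is a general fact about unitary groups of the second kind, valid for \emph{every} hermitian form $h$, not only isotropic ones. Your explicit hyperbolic-plane isometry correctly handles the isotropic case, but your treatment of anisotropic $h$ (``small dimension'', ``low-rank classification of \S\ref{class-gen}'') is not an argument: those results are about hyperbolicity of forms and say nothing about the image of $\mathrm{Nrd}$ on $U(D,\tau,h)(F_0)$, nor is it true in this generality that anisotropic $h$ must have small dimension. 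Replace the isotropic/anisotropic dichotomy with the single reference to \cite[p.~202]{KMRT} and the proof is complete.
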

 
 \begin{proof} Let  $\mu \in  F ^*$ with $\theta =  \mu \bar{\mu} ^{-1}$. 
Suppose $R_G(\delta(\theta)) = 0$.  Since
$R_G(\delta(\theta)) = $ cores$_{F/F_0}( A \cdot (\mu))$ (\cite[Appendix]{PP}),
 by (\ref{cores}), there exists $\lambda \in F_0$ such that 
$A \cdot (\mu) = A \cdot (\lambda)$.  In particular $\lambda^{-1} \mu \in $ Nrd$(A)$.
Since $\theta = \lambda^{-1} \mu ~ \overline{(\lambda^{-1} \mu})^{-1}$, replacing $\mu$ by $\lambda^{-1}\mu$ we assume that 
$\mu$ is a reduced norm from $A$. Hence,  $\theta$ is in the image of $U(A,\tau, h)(F_0)$ (cf. \cite[p. 202]{KMRT}).
In particular   $\delta(\theta)$ is trivial in $H^1(F_0, SU(A, \tau))$.  
 \end{proof}

 \begin{theorem} 
 \label{complete} Let $F_0$  a complete discretely valued field with residue field a global field 
 with no real places or 
a local field 
 and $F/F_0$ a quadratic extension.  Let $A$ be a central simple algebra over $F$ with a $F/F_0$-involution 
 $\tau$ and $h$ a hermitian form over $(A, \tau)$. 
  Then the Rost invariant $R_{SU(A,\tau, h)} : H^1(F_0, SU(A, \tau, h)) \to H^3(F_0, \Q/\Z(2))$  has trivial kernel. 
 \end{theorem}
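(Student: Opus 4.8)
The plan is to deduce the statement from the classification theorem \ref{class-complete} and from Proposition \ref{center} by a diagram chase in the exact sequence $(\star)$ of \S\ref{rost}, after a Morita reduction to the case of a division algebra.

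Let $\zeta\in H^1(F_0,SU(A,\tau,h))$ with $R_{SU(A,\tau,h)}(\zeta)=0$, and let $h'$ be the hermitian form over $(A,\tau)$ representing the image of $\zeta$ in $H^1(F_0,U(A,\tau,h))$. By exactness of $(\star)$ this image lies in the kernel of $d_h$, so $\dim(h')=\dim(h)$ and $d_h(h')=1$; and since $\zeta$ is a lift of $h'$, by definition $R_h(h')$ is the class of $R_{SU(A,\tau,h)}(\zeta)=0$, so $R_h(h')=0$. By Proposition \ref{sumrost}, $R(h\perp -h')=R_h(h')=0$; moreover $h\perp -h'$ has even dimension, and if $h_0$ denotes the hyperbolic form over $(A,\tau)$ of dimension $2\dim(h)$ then $d_{h_0}(h\perp -h')=d_h(h')=1$, so the discriminant of $h\perp -h'$ is trivial.

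Now write $A=M_n(D)$ with $D$ a central division algebra over $F$ carrying an $F/F_0$-involution $\tau_0$ for which $\tau$ is adjoint to a hermitian form over $(D,\tau_0)$, and let $\tilde h,\tilde h'$ be the images of $h,h'$ over $(D,\tau_0)$ under Morita equivalence. Morita equivalence is additive and preserves dimension, discriminant and the Rost invariant (Lemma \ref{morita-rost-inv}, together with the reduction argument in the proof of \ref{sumrost}), so $\tilde h\perp -\tilde h'$ has even dimension, trivial discriminant and trivial Rost invariant. Since $\mathrm{per}(D)=\mathrm{per}(A)$ is prime to the residue characteristic, Theorem \ref{class-complete} applies to $(D,\tau_0)$ and shows $\tilde h\perp -\tilde h'$ is hyperbolic; as $\tilde h'\perp -\tilde h'$ is hyperbolic of the same dimension, Witt cancellation of hermitian forms gives $\tilde h\cong\tilde h'$, hence $h\cong h'$. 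Thus the image of $\zeta$ in $H^1(F_0,U(A,\tau,h))$ is the base point, and by exactness of $(\star)$ there is $\theta\in F^{*1}$ with $\zeta=\delta(\theta)$. Then $R_{SU(A,\tau,h)}(\delta(\theta))=R_{SU(A,\tau,h)}(\zeta)=0$, so Proposition \ref{center} forces $\delta(\theta)=0$, i.e.\ $\zeta$ is trivial.

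The substantive inputs are already in hand: Theorem \ref{class-complete} is the classification over the complete field, and Proposition \ref{center} disposes of the classes in the image of $\delta$. The only points requiring attention are that Morita equivalence be compatible with all three invariants at once, so that the hypotheses of \ref{class-complete} are inherited by $\tilde h\perp -\tilde h'$, and the Witt cancellation step; neither is a genuine obstacle, and the theorem follows formally from \S\ref{rost-comp}, \S\ref{classification} and Proposition \ref{center}.
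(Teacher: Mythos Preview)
Your argument is correct and follows essentially the same route as the paper: lift $\zeta$ to $h'$, use \ref{sumrost} to get $R(h\perp -h')=0$, invoke the classification theorem \ref{class-complete} to conclude $h\simeq h'$, then finish with Proposition \ref{center} on the image of $\delta$. The only difference is that you spell out the Morita reduction to a division algebra before applying \ref{class-complete} (which is stated for division $D$), while the paper applies it directly to $(A,\tau)$ and leaves that passage implicit; this is a cosmetic rather than substantive distinction.
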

 
 \begin{proof} Let $\zeta \in  H^1(F_0, SU(A, \tau, h))$ with $R_{SU(A, \tau, h)}(\zeta) = 0$. 
 Let $h'$ be a hermitian form over $(A, \tau)$ representing the image of $\zeta$ in $H^1(F_0, U(A, \tau, h))$.
 Then $d_h(h') = 1$ and dim$(h) = $ dim$(h')$.  Since $R_{SU(A, \tau, h)}(\zeta) = 0$, $R_h(h')= 0$. 
 Since $R(h\perp -h') = R_h(h')$ (\ref{sumrost}), $R(h\perp -h') = 0$. Hence, by (\ref{class-complete}), $h \perp - h'$ is hyperbolic and 
 hence $h \simeq h'$. In particular, 
 $h'$ represents the trivial element in $H^1(F_0, U(A, \tau, h)$ and hence  by the exact sequence of cohomology sets, 
 there exists 
 $\theta \in F^{*1}$ such that $\delta(\theta) =  \zeta$. 
 Hence, by (\ref{center}),  $\zeta$ is trivial. 
 \end{proof}

   \begin{theorem} 
   \label{main-rost-su} Let $K$ be a local field with residue field $\kappa$. 
   Let $F_0$  the  function field  of a curve over   $K$
 and $F/F_0$ a quadratic field extension.  Let $A$ be a central simple algebra over $F$ with a $F/F_0$-involution 
 $\tau$ and $h$ a hermitian form over $(A, \tau)$.  Suppose that $2per(A)$ is coprime to char$(\kappa)$. 
  Then the Rost invariant $R_G : H^1(F_0, SU(A, \sigma)) \to H^3(F_0, \Q/\Z(2))$  has trivial kernel. 
 \end{theorem}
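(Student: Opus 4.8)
The plan is to follow the proof of Theorem~\ref{complete}, replacing the classification theorem \ref{class-complete} by its semiglobal analogue \ref{class-semiglobal} and replacing the local input \ref{center} by a global argument using Kato's Hasse principle for $H^3$ together with the Hasse principle for $SU(A,\tau,h)$-torsors. Let $\zeta\in H^1(F_0,SU(A,\tau,h))$ with $R_G(\zeta)=0$, and let $h'$ be a hermitian form over $(A,\tau)$ representing the image of $\zeta$ in $H^1(F_0,U(A,\tau,h))$. Then $\dim(h')=\dim(h)$, $d_h(h')=1$, and $R_h(h')=0$ because $R_G(\zeta)=0$, so by \ref{sumrost} $R(h\perp -h')=R_h(h')=0$. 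Since $h\perp -h'$ has even dimension $2\dim(h)$, trivial discriminant $d_h(h')=1$ and trivial Rost invariant, \ref{class-semiglobal} shows $h\perp -h'$ is hyperbolic; hence $h\simeq h'$, so $h'$ is trivial in $H^1(F_0,U(A,\tau,h))$, and by the exact sequence $F^{*1}\buildrel{\delta}\over{\to}H^1(F_0,SU(A,\tau,h))\to H^1(F_0,U(A,\tau,h))$ there is $\theta\in F^{*1}$ with $\delta(\theta)=\zeta$. Writing $\theta=\mu\tau(\mu)^{-1}$ with $\mu\in F^*$, we have $\cores_{F/F_0}(A\cdot(\mu))=R_G(\delta(\theta))=0$ by \cite[Appendix]{PP}, and it remains to show $\delta(\theta)$ is trivial.

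As in \ref{center}, it suffices to find $\lambda\in F_0^*$ with $\mu/\lambda\in Nrd(A^*)$: then (as $\tau$ fixes $F_0$) $\theta=Nrd(a)\tau(Nrd(a))^{-1}$ lies in the image of $U(A,\tau,h)(F_0)\to F^{*1}$, so $\delta(\theta)$ is trivial (cf.\ \cite[p.~202]{KMRT}). Since $2\,\mathrm{per}(A)$ is coprime to $\mathrm{char}(\kappa)$ and $\mathrm{ind}(A)$ has the same prime divisors as $\mathrm{per}(A)$, the degree of the division algebra $D$ Brauer-equivalent to $A$ is prime to $\mathrm{char}(\kappa)$; since $F$ is again the function field of a curve over a local field, the type $^1\!A$ case of the main theorem (\cite{PPS}), applied to $SL_1(D)$ over $F$, says that the map $F^*/Nrd(A^*)\to H^3(F,\Q/\Z(2))$, $c\mapsto A\cdot(c)$, is injective. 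So it is enough to produce $\lambda\in F_0^*$ with $A\cdot(\mu/\lambda)=0$ in $H^3(F,\Q/\Z(2))$.

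To construct $\lambda$, work locally at the divisorial discrete valuations $\nu$ of $F_0$. For each such $\nu$ the completion $F_{0\nu}$ is a complete discretely valued field whose residue field is a local field or a global field with no real places (the residue field of a horizontal, respectively vertical, prime divisor on a regular model of $F_0$ over the valuation ring of $K$), and $\cores_{(F\otimes_{F_0}F_{0\nu})/F_{0\nu}}(A\cdot(\mu))=0$; by \ref{cores} and \ref{cores-ram} (and, when $F\otimes_{F_0}F_{0\nu}$ is split, by taking $\lambda_\nu$ to be a coordinate of $\mu$) there is $\lambda_\nu\in F_{0\nu}^*$ with $A\cdot(\mu)=A\cdot(\lambda_\nu)$ over $F\otimes_{F_0}F_{0\nu}$. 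For all but finitely many $\nu$ — those outside the ramification locus of $A$ and the support of $\mathrm{div}(\mu)$ — one may take $\lambda_\nu=1$, because there $A\cdot(\mu)$ is unramified over every completion $F_w$ of $F$ above $\nu$, hence zero there, the residue field of $F_w$ being a local or global field with $H^3(-,\Q/\Z(2))=0$. Now choose, by weak approximation at the finitely many exceptional valuations of $F_0$, an element $\lambda\in F_0^*$ that is $\nu$-adically so close to $\lambda_\nu$ at each of them that $\lambda/\lambda_\nu\in Nrd(A_{F_w}^*)$ for every $w\mid\nu$; this is an open condition, as $Nrd(A_{F_w}^*)$ is an open subgroup of finite index in $F_w^*$. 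For this $\lambda$ we get $A\cdot(\mu/\lambda)=0$ over every completion $F_w$ of $F$: at the exceptional $\nu$ from $A\cdot(\mu)=A\cdot(\lambda_\nu)$ and $A\cdot(\lambda)=A\cdot(\lambda_\nu)$, and at every other $\nu$ because $A$ is unramified and $\mu,\lambda$ are units there. By Kato's Hasse principle \cite[Theorem~5.2]{Ka} for $H^3(F,\Q/\Z(2))$, $A\cdot(\mu/\lambda)=0$ over $F$; thus $\mu/\lambda\in Nrd(A^*)$, and $\delta(\theta)$ — hence $\zeta$ — is trivial. The Hasse principle for $SU(A,\tau,h)$-torsors \cite[Theorem~13.1]{PS2022} can be used to organise this local-to-global passage, applying \ref{complete} directly at the valuations where $F/F_0$ stays a field.

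The main obstacle is the construction of the global element $\lambda\in F_0^*$ out of the local data $\{\lambda_\nu\}$: one must keep careful track of the divisorial valuations at which $F/F_0$ ramifies or splits, match the ramification of $A\cdot(\mu)$ over the quadratic extension $F$ against elements drawn from the subfield $F_0$, and verify that the conditions at the bad valuations are open so weak approximation applies. This is also exactly why both global inputs are needed — the $SU$-torsor Hasse principle to descend the problem to valuations of $F_0$, and Kato's degree-three Hasse principle to recover the identity $A\cdot(\mu/\lambda)=0$ over $F$ — since $SL_1$-Rost injectivity can fail over the completions $F_{0\nu}$ when the period and index of $A$ differ there, so the split-quadratic places cannot be settled purely locally.
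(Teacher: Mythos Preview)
Your weak-approximation step has a genuine gap. You pick $\lambda\in F_0^*$ close to the $\lambda_\nu$ at the finitely many exceptional valuations and then assert that ``at every other $\nu$ \dots\ $\mu,\lambda$ are units there.'' Weak approximation gives no such control: $\lambda$ will in general acquire zeros and poles at new divisorial valuations $\nu$ outside your exceptional set. At such a $\nu$ the algebra $A$ is unramified and $\mu$ is a unit, so $A\cdot(\mu)=0$ over every $F_w$ with $w\mid\nu$; but $A\cdot(\lambda)$ need not vanish there, since its residue at $w$ is $v_w(\lambda)\cdot\bar A_w\in\Br(\kappa(w))$, which is nonzero whenever $\bar A_w\neq 0$ and $v_w(\lambda)\not\equiv 0\pmod{\mathrm{per}(\bar A_w)}$. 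Consequently $A\cdot(\mu/\lambda)$ fails to die at these new places, and Kato's Hasse principle does not give you the global identity $A\cdot(\mu/\lambda)=0$ that you need. No evident refinement of the approximation repairs this: you would have to force $\mathrm{div}(\lambda)$ to be supported on a prescribed finite set, which is a class-group obstruction, not an approximation condition.

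The paper sidesteps the whole issue by reversing the order of the reductions. It applies the Hasse principle for $SU(A,\tau,h)$-torsors \cite[Theorem 13.1]{PS2022} and Kato's theorem for $H^3$ at the very start, reducing at once to proving that $R_G$ has trivial kernel over each completion $F_{0\nu}$. When $F\otimes_{F_0}F_{0\nu}$ is a field this is exactly Theorem~\ref{complete}; when it splits, $SU(A,\tau,h)_{F_{0\nu}}\cong SL_n(B)$ and one quotes \cite[Theorem 4.12]{PPS} directly. Your concern that ``$SL_1$-Rost injectivity can fail over the completions $F_{0\nu}$'' is therefore misplaced: that injectivity does hold over complete discretely valued fields with local or global residue field, so the split places are handled purely locally and no patching of the $\lambda_\nu$ is required. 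In this light your opening reduction via \ref{class-semiglobal} is also a detour, since that theorem itself rests on the same local-global input (\cite[Theorem 11.6]{PS2022}) already packaged inside the $SU$-torsor Hasse principle.
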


\begin{proof}  Let $\Omega$ be the set of divisorial discrete valuations of $F_0$. 
For $\nu \in \Omega$, let $F_{0\nu}$ be the completion of $F_0$ at $\nu$. 

By  (\cite[Theorem 13.1]{PS2022}), 
$$H^1(F_0, SU(A, \tau, h) )  \to \prod_{\nu \in \Omega}  H^1(F_{0_\nu}, SU(A, \tau, h) )$$ has trivial kernel. 
By  a result of Kato(\cite[Theorem 5.2]{Ka}),  
 $$H^3(F_0, \Q/\Z(2)) \to \prod_{\nu \in \Omega}H^3(F_{0\nu}, \Q/\Z(2))$$ has trivial kernel. 

Thus, in view of   the  
 following commutative diagram 
$$
\begin{array}{ccc}
H^1(F_0, SU(A, \tau, h) ) & \buildrel{R_{SU(A, \tau, h)}}\over{\to} & H^3(F_0, \Q/\Z) \\
\downarrow & & \downarrow \\
\prod_{\nu \in \Omega}  H^1(F_{0_\nu}, SU(A, \tau, h) ) & \buildrel{R_{SU(A, \tau, h)}}\over{\to} & 
\prod_{\nu \in \Omega}  H^3(F_{0_\nu}, \Q/\Z), 
\end{array}
$$
 it is enough to show that $R_{SU(A, \tau, h)} :  H^1(F_{0_\nu}, SU(A, \tau, h) )   \to 
  H^3(F_{0_\nu}, \Q/\Z)$  has trivial kernel  for all $\nu \in \Omega$. 

Let $\nu \in \Omega$ and     $\kappa(\nu)$  the residue field of $F_{0\nu}$.
Then either $\kappa(\nu)$ is a local field (in fact a finite extension of $K$) or a global field of positive characteristic.

Suppose that $F\otimes_{F_0} F_{0\nu}$ is a field. Then, by (\ref{complete}), 
the Rost invariant map  $R_{SU(A, \tau, h)}  :  H^1(F_{0_\nu}, SU(A, \tau, h) ) \to  
   H^3(F_{0_\nu}, \Q/\Z)$ has trivial kernel.

Suppose that $F\otimes_{F_0} F_{0\nu}$ is not a field. Then 
$ F\otimes_{F_0} F_{0\nu} \simeq F_{0\nu} \times F_{0\nu}$
and $A \otimes_{F_0} F_{0\nu} \simeq (B_\nu, B_\nu^{op}) $  for some centrals simple algebra $B_\nu$
over $F_{0\nu}$ and $SU(A, \tau, h)_{F_{0\nu}} \simeq SL_n(B)$ (\cite[\S 29]{KMRT}). 
Further $R_{SU(A, \tau, h)} : H^1(F_{0\nu}, SU(Am \tau, h)) \to H^3(F_{0 \nu}, \Q/\Z)$ is 
identified with the Rost invariant map $R_{SL_n(B)} : H^1(F_{0\nu}, SL_n(B)) \to H^3(F_{0 \nu}, \Q/\Z)$ (\cite[p. 438]{KMRT}).
Since   $R_{SL_n(B)} : H^1(F_{0\nu}, SL_n(B)) \to H^3(F_{0 \nu}, \Q/\Z)$ has trivial 
kernel (\cite[Theorem 4.12]{PPS}),  $R_{SU(A, \tau, h)} : H^1(F_{0\nu}, SU(A, \tau, h)) \to H^3(F_{0 \nu}, \Q/\Z)$
has trivial kernel.  
 \end{proof}

\begin{theorem}
\label{main-rost}
Let $K$ be a local field with residue field $\kappa$ and
 $F$  the function field of a  curve over $K$. Let $G$ be an absolutely simple simply connected linear algebraic 
  group over $F$ of classical type.
 If char$(\kappa)$ is good with respect to $G$, then 
 $$R_G : H^1(F, G) \to H^3(F, \Q/\Z(2))$$
 is injective. 
\end{theorem}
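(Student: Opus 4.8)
The plan is to argue type by type according to the Tits classification of absolutely simple simply connected groups of classical type, reducing each case to a result that is either already in the literature or established in the preceding sections. First I would recall, as noted in the introduction, that a standard twisting argument makes the injectivity of $R_G$ equivalent to the triviality of $\ker R_G$, so it suffices to prove the latter. Next I would invoke the classification: over $F$ such a $G$ is, up to isomorphism, one of $SL_1(A)$ for a central simple $F$-algebra $A$ (type $^1A_n$); $SU(B,\tau)$ for a central simple algebra $B$ with an $L/F$-involution $\tau$ over a quadratic field extension $L/F$ (type $^2A_n$); the spin group of a central simple algebra with an orthogonal involution (type $D_n$, $D_4$ non-trialitarian); the symplectic group of a central simple algebra with a symplectic involution (type $C_n$); or the spin group of a quadratic form of dimension $2n+1$ (type $B_n$). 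For type $^1A_n$ the triviality of $\ker R_G$ is \cite[Theorem 4.12]{PPS}, and for types $B_n$, $C_n$ and $D_n$ ($D_4$ non-trialitarian) it is the theorem of Preeti \cite{preeti}; in each of these cases the hypothesis that $\mathrm{char}(\kappa)$ is good for $G$ is precisely the period/index coprimality condition needed to invoke the cited result.

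It then remains to treat type $^2A_n$, where $G = SU(B,\tau)$ as above. Writing $B = M_m(D)$ with $D$ a central division algebra over $L$, the algebra $D$ carries an $L/F$-involution $\tau_0$ and $\tau$ is the adjoint involution of a hermitian form $h$ over $(D,\tau_0)$, so there is a canonical isomorphism of algebraic groups $SU(B,\tau) \cong SU(D,\tau_0,h)$ over $F$ (cf.\ \S\ref{rost} and \cite[\S 29]{KMRT}); since an isomorphism has $n_\phi = 1$, it identifies the two Rost invariants, and it is enough to show $\ker R_{SU(D,\tau_0,h)}$ is trivial. For the coprimality condition: $G$ being of type $^2A_n$ forces $\deg(B) = n+1$, hence $\mathrm{per}(D) \mid \deg(D) \mid n+1$, and since $\mathrm{char}(\kappa)$ is good for $G$ it does not divide $2(n+1)$; therefore $2\,\mathrm{per}(D)$ is coprime to $\mathrm{char}(\kappa)$. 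As $F$ is the function field of a curve over the local field $K$ with residue field $\kappa$ and $L/F$ is a quadratic field extension, Theorem \ref{main-rost-su} (taking the base field there to be $F$, the quadratic extension to be $L$, and the algebra to be $D$) yields that $R_{SU(D,\tau_0,h)} : H^1(F, SU(D,\tau_0,h)) \to H^3(F, \Q/\Z(2))$ has trivial kernel, which completes the argument.

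The mathematical substance of the theorem lies entirely in Theorem \ref{main-rost-su}, whose proof --- resting on the classification of hermitian forms in Theorem \ref{class-complete} and the class-field-theoretic control of the subset $\delta(F^{*1})$ --- has already been carried out; the present assembly is essentially bookkeeping. The points that need care are keeping the Tits classification straight (so that trialitarian $D_4$ is correctly excluded and $^2D_n$ is subsumed under the orthogonal-involution spin groups handled by \cite{preeti}) and checking, case by case, that the good-prime condition translates exactly into the coprimality hypotheses demanded by \cite[Theorem 4.12]{PPS}, \cite{preeti}, and Theorem \ref{main-rost-su}. I expect this to be the only real obstacle; no new mathematical input beyond the previous sections should be needed.
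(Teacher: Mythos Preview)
Your proposal is correct and follows essentially the same approach as the paper: a type-by-type reduction via the Tits classification, citing \cite{preeti} for $B_n$, $C_n$, $D_n$, \cite{PPS} for $^1A_n$, and Theorem~\ref{main-rost-su} for $^2A_n$, with the twisting argument to pass between triviality of the kernel and injectivity. The paper places the twisting step at the end (and applies Theorem~\ref{main-rost-su} directly to the central simple algebra, without your intermediate Morita reduction to the underlying division algebra, since that theorem is already stated for arbitrary central simple algebras), but these are organisational rather than mathematical differences.
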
 

\begin{proof}  If $G$ is of type $B_n$, $C_n$ or $D_n$, then by   (\cite{preeti}), $R_G$ is  injective. 
Suppose $G$ is  of type $^1A_n$. Then $G =  SL_1(A)$  for some central simple algebra over $F$ (\cite[Theorem 26.9]{KMRT})
and hence     by  (\cite{PPS}), $R_G$ is injective. 
Suppose $G$ is of type $^2A_n$. Then $G = SU(A, \tau, h)$ for some central simple algebra $A$ over with an
involution $\tau$ and $h$ a hermitian form over $(A, \tau)$ (\cite[Theorem 26.9]{KMRT}). Hence, by (\ref{main-rost-su}), 
$R_G$ has trivial kernel.   Thus, by a twisting argument (\cite[Proposition 28.8]{KMRT}), $R_G$ is injective. 
\end{proof}

\providecommand{\bysame}{\leavevmode\hbox to3em{\hrulefill}\thinspace}

\end{document}